
\documentclass{amsart}


%
\usepackage{amssymb}
%
%
%
\usepackage[utf8]{inputenc}
\usepackage[T1]{fontenc}
\usepackage[english]{babel}
%
%
\usepackage{xargs}
%
%
%
%
\usepackage{graphicx, color}
\usepackage[figurewithin=none]{caption,subcaption} 
%
%
%
%
\usepackage{enumitem}

%

%
%
%
%
\usepackage{hyperref}
\hypersetup{
	hidelinks,
  colorlinks  = true,    
  urlcolor    = blue,    
  linkcolor   = blue,    
  citecolor   = blue,      
  pdfauthor   = {Antonio Montero},%
  pdfsubject  = {Chiral polytopes},%
  pdftitle    = {On the Schlafli symbol of chiral extensions of polytopes},  %
}

\usepackage[capitalise,noabbrev, nameinlink]{cleveref}
\Crefname{step}{Step}{Steps}

%
%
%
%






\theoremstyle{plain}
\newtheorem{thm}{Theorem}[section]

\newtheorem{lem}[thm]{Lemma}

\newtheorem{prop}[thm]{Proposition}

\newtheorem{coro}[thm]{Corollary}

\theoremstyle{definition}

\newtheorem{exam}[thm]{Example}

\theoremstyle{remark}

\newtheorem{rem}[thm]{Remark}


\numberwithin{equation}{section}


\renewcommand{\leq}{\leqslant} %
\renewcommand{\geq}{\geqslant}
\renewcommand{\epsilon}{\varepsilon} %
\renewcommand{\subset}{\subseteq} %

\renewcommand{\{}{\lbrace}
\renewcommand{\}}{\rbrace}

\renewcommand{\bar}{\overline}
\DeclareMathOperator{\rk}{rk}
\DeclareMathOperator{\cay}{Cay}
\DeclareMathOperator{\GPR}{\mathcal{G}}
\DeclareMathOperator{\fl}{\mathcal{F}}
\DeclareMathOperator{\fw}{\mathcal{F}^{w}}
\DeclareMathOperator{\con}{Con}
\DeclareMathOperator{\conp}{\con^{+}}
\DeclareMathOperator{\conw}{\con^{w}}

\DeclareMathOperator{\aut}{Aut}
\DeclareMathOperator{\autp}{\aut^{+}}
\DeclareMathOperator{\lcm}{lcm}
\DeclareMathOperator{\stab}{Stab}
%
%
\newcommand{\vect}[1]{\bar{\mathrm{#1}}} 

%
\newcommand{\cP}{\mathcal{P}}
\newcommand{\cQ}{\mathcal{Q}}
\newcommand{\cK}{\mathcal{K}}
\newcommand{\cM}{\mathcal{M}}
\newcommand{\cN}{\mathcal{N}}
\newcommand{\cR}{\mathcal{R}}
\newcommand{\cG}{\mathcal{G}}
\newcommand{\bN}{\mathbb{N}}
\newcommand{\bZ}{\mathbb{Z}}
\newcommand{\vx}{\vect{x}}
\newcommand{\vy}{\vect{y}}

\newcommand{\va}{\vect{a}}
\newcommand{\ve}{\vect{e}}

\newcommand{\bLL}{\Lambda}

\newcommandx{\id}{\epsilon}
\newcommandx{\twoSM}[2][1=\cM, 2=s] {\hat{2}#2^{#1 - 1}}
%
\newcommandx{\gpr}[2][1=\cK, 2=s]{\GPR_{#2}(#1)}
\newcommand{\mix}{\diamondsuit}

\newcommand{\baseFlag}{\Phi_{0}}
\newcommandx{\Proot}[2][1=\cP, 2= \baseFlag, usedefault]{\left( #1,#2 \right)}

\begin{document}


\title{On the Schläfli symbol of chiral extensions of polytopes}


\author{Antonio Montero}
\address{Department of Mathematics and Statistics, York University, Toronto, Ontario M3J 1P3, Canada}
\email{amontero@yorku.ca}


\keywords{Abstract polytopes, chiral polytopes, Schläfli symbol}

\subjclass[2010]{Primary: 52B15, 52B11, Secondary: 52B05}

\begin{abstract}
Given an abstract $n$-polytope $\cK$, an abstract $(n+1)$-polytope $\cP$ is an extension of $\cK$ if all the facets of $\cP$ are isomorphic to $\cK$. A chiral polytope is a polytope with maximal rotational symmetry that does not admit any reflections. If $\cP$ is a chiral extension of $\cK$, then all but the last entry of the Schläfli symbol of $\cP$ are determined. In this paper we introduce some constructions of chiral extensions $\cP$ of certain chiral polytopes in such a way that the last entry of the Schläfli symbol of $\cP$ is arbitrarily large.
\end{abstract}


\maketitle


\section{Introduction} \label{sec:intro}
Highly symmetric polyhedra have been of  interest to humanity not only for their mathematical structure but also for their degree of beauty. 
There exists evidence that the five Platonic Solids were known before the Greeks. However, undoubtedly the Greeks have the credit of collecting and formalising the mathematical knowledge about these objects. 
The formal study of highly symmetric polyhedra-like structures took a new breath in the $20^{th}$ century. 
The work by Coxeter and Grünbaum was significant and set the basis for what today we know as abstract polytopes.

Abstract polytopes are combinatorial structures that generalise (the face-lattice of) convex polytopes as well as the geometric polytopes considered by Coxeter and Grünbaum. 
They also include face-to-face tilings of Euclidean and Hyperbolic spaces as well as most maps on surfaces.
Abstract polytopes were introduced by Danzer and Schulte in \cite{DanzerSchulte_1982_RegulareInzidenzkomplexe.I} where they describe the basic properties of these objects.
The early research on abstract polytopes was focused on the so called \emph{abstract regular polytopes}. These are the abstract polytopes that admit a maximal degree of symmetry by combinatorial reflections. 
Much of the rich theory of abstract regular polytopes can be found in \cite{McMullenSchulte_2002_AbstractRegularPolytopes}.

Abstract polytopes inherit a natural recursive structure from their convex and geometric analogues: a cube can be thought as a family of six squares glued along their edges. 
In general, an $n$-polytope $\cP$ can be thought as a family of $(n-1)$-polytopes glued along $(n-2)$-faces.
These $(n-1)$-polytopes are the \emph{facets} of $\cP$ and whenever all these polytopes are \emph{isomorphic} to a fixed polytope $\cK$ we say that $\cP$ is an extension of $\cK$.

The local combinatorics of a highly symmetric $n$-polytope is described by its Schläfli symbol, whenever it is well-defined. 
In particular, the last entry of this symbol describes how many facets are incident to each $(n-3)$-face.
If $\cK$ has Schläfli symbol $\left\{ p_{1}, \dots, p_{n-2} \right\} $ and $\cP$ is an extension of $\cK$ with a well-defined Schläfli symbol, then it must be $\left\{ p_{1}, \dots, p_{n-2}, p_{n-1} \right\} $ for some $p_{n-1} \in \left\{ 2, \dots, \infty \right\} $.

The problem of determining whether or not a fixed polytope $\cK$ admits an extension has been part of the theory's development since its beginning. In fact in \cite{DanzerSchulte_1982_RegulareInzidenzkomplexe.I} Danzer and Schulte attack this problem for regular polytopes.
They prove that every non-degenerate regular polytope $\cK$ admits an extension $\cP$ and this extension is finite if and only if $\cK$ is finite. Moreover, the last entry of the Schläfli symbol of $\cP$ is $6$.
In \cite{Danzer_1984_RegularIncidenceComplexes} Danzer proves that every non-degenerate polytope $\cK$ admits an extension and this extension is finite (resp. regular) if and only if $\cK$ is finite (resp. regular). If $\cK$ is regular, the last entry of the Schläfli symbol of the extension is $4$. In \cite{Pellicer_2009_ExtensionsRegularPolytopes, Pellicer_2010_ExtensionsDuallyBipartite} Pellicer develops several constructions that have as a consequence that every regular polytope admits a regular extension with prescribed even number as the last entry of the Schläfli symbol. 
On the other hand, in \cite{Hartley_2005_LocallyProjectivePolytopes}, Hartley proves that the $n$-hemicube, the polytope obtained by identifying oposite faces of a $n$-cube, cannot be extended with an odd number as the last entry of the Schläfli symbol.

Besides regular polytopes, another class of symmetric abstract polytopes that has been of interest is that of \emph{chiral polytopes}.
Chiral polytopes are those polytopes that admit maximal symmetry by abstract rotations but do not admit mirror reflections.

The problem of building chiral polytopes on higher ranks has proved to be fairly difficult. 
This problem has been attacked from several approaches, see for example 
\cite{ColbournWeiss_1984_CensusRegular$3$,NostrandSchulte_1995_ChiralPolytopesHyperbolic,SchulteWeiss_1994_ChiralityProjectiveLinear} for rank $4$ and \cite{ConderHubardPisanski_2008_ConstructionsChiralPolytopes,DAzevedoJonesSchulte_2011_ConstructionsChiralPolytopes} for rank $5$ and $6$. It was not until 2010 that Pellicer showed in \cite{Pellicer_2010_ConstructionHigherRank} the existence of chiral polytopes of all ranks higher than $3$.

The problem of finding chiral extensions of abstract polytopes has been one of the main approaches to find new examples of chiral polytopes. 
However, the results are less numerous than those concerning regular polytopes. 
If $\cP$ is a chiral $n$-polytope, then its facets are either orientably regular or chiral. 
In any case, the $(n-2)$-faces of $\cP$ must be regular (see \cite[Proposition 9]{SchulteWeiss_1991_ChiralPolytopes}). 
This implies that if $\cP$ is a chiral extension of $\cK$, then $\cK$ is either regular or chiral with regular facets. 

In \cite{CunninghamPellicer_2014_ChiralExtensionsChiral} Cunningham and Pellicer proved that any finite chiral polytope with regular facets admits a finite chiral extension. 
Their construction offers little control on the Schläfli symbol of the resulting extension.

In this work we introduce two constructions of extensions of chiral polytopes with regular facets that satisfy certain conditions. 
These constructions offer some control on the Schläfli symbol of the resulting extension. 
In particular we prove that under certain conditions, some chiral polytopes with regular facets admit chiral extensions whose Schläfli symbol has arbitrarily large last entry.

The paper organized as follows. 
In \cref{sec:basics} we introduce the basic concepts on highly symmetric polytopes and related topics needed to develop our results. 
In \cref{sec:dually} we introduce a construction of a GPR-graph of a chiral extension for a dually-bipartite chiral polytope. 
In \cref{sec:quotients} we define the maniplex $\twoSM$ as a generalisation of the polytope $2s^{\cK-1}$ introduced by Pellicer in \cite{Pellicer_2009_ExtensionsRegularPolytopes} and use it to build chiral extensions of chiral polytopes admitting some particular regular quotients. 
Finally, in \cref{sec:examples} we show explicit examples of how to use our construction for chiral maps on the torus.

\section{Basic notions} \label{sec:basics}
In this section we introduce the basic concepts on the theory of highly symmetric abstract polytopes. 
Our main references are \cite{McMullenSchulte_2002_AbstractRegularPolytopes, MonsonPellicerWilliams_2014_MixingMonodromyAbstract, SchulteWeiss_1991_ChiralPolytopes, Wilson_2012_ManiplexesPart1}.
\subsection{Regular abstract polytopes.}\label{sec:HSAP}
Abstract polytopes are combinatorial structures with a very geometrical nature.
They generalise convex polytopes, tilings of the Euclidean spaces, maps on surfaces, among others.
Formally, \emph{abstract polytope of rank $n$} or an \emph{$n$-polytope}, for short, is a partially ordered set $(\cP, \leq)$ (we usually omit the order symbol) that satisfies the properties in \cref{item:maxmin,item:flags,item:diamond,item:sfc} below.
\begin{enumerate}
 \item \label{item:maxmin} $\cP$ has a minimum element $F_{-1}$ and a maximum element $F_{n}$.
\end{enumerate}
The elements of $\cP$ are called \emph{faces}. 
We say that two faces $F$ an $G$ are \emph{incident} if $F \leq G$ or $G\leq F$.
A \emph{flag} is a maximal chain of $\cP$.
We require that
\begin{enumerate}[resume]
 \item \label{item:flags} Ever flag of $\cP$ contains exactly $n+2$ elements, including the maximum and the minimum of $\cP$.
\end{enumerate}
The condition in \cref{item:flags} allows us to define a \emph{rank function} $\rk: \cP \to \left\{ -1, \dots, n \right\} $ such that $\rk(F_{-1}) = -1$, $\rk(F_{n})=n$ and for every other face $F$, $\rk(F) = r$ if there exists a flag $\Phi$ of $\cP$ such that there are precisely $r$ faces $G$ in $\Phi$ satisfying $F_{-1} < G < F$.
Note that this definition does not depend on the choice of $\Phi$.
The rank function is a combinatorial analogue of the notion of dimension for convex polytopes and tillings of the space.
We usually call \emph{vertices}, \emph{edges} and \emph{facets} the elements of rank $0$, $1$ and $n-1$ respectively.
In general, a face of rank $i$ is called an $i$-face
We also need that $\cP$ satisfies the \emph{diamond condition} described in \cref{item:diamond}. 
\begin{enumerate}[resume]
 \item \label{item:diamond} For every $i \in \left\{ 0, \dots, n-1 \right\} $, given an $(i-1)$-face $F_{i-1}$ and an $(i+1)$-face $F_{i+1}$ with $F_{i-1} \leq F_{i+1}$, the set $\left\{ F \in \cP : F_{i-1} < F < F_{i+1} \right\} $ has cardinality $2$.
\end{enumerate}
The diamond condition implies that given $i \in \left\{ 0, \dots, n-1 \right\} $, for every flag $\Phi$ of $\cP$ there exists a unique flag $\Phi^{i}$ such that $\Phi$ and $\Phi^{i}$ differ exactly in the face of rank $i$. 
In this situation we say that $\Phi$ and $\Phi^{i}$ are \emph{adjacent} or \emph{$i$-adjacent}, if we need to emphasise on $i$.
We also require that
 \begin{enumerate}[resume]
 \item \label{item:sfc} $\cP$ is \emph{strongly flag connected}.
\end{enumerate}
Meaning that for every two flags $\Phi$ and $\Psi$ there exists a sequence $\Phi=\Phi_{0}, \Phi_{1}, \dots, \Phi_{k} = \Psi$ such that every two consecutive flags are adjacent and $\Phi\cap\Psi \subset \Phi_{i}$ for every $i \in \left\{ 0, \dots, k \right\} $.

If $i_{1}, i_{2}, \dots, i_{k} \in \left\{ 0, \dots, n-1 \right\} $, we define recursively $\Phi^{i_{1}, \dots, i_{k}} = (\Phi^{i_{1}, \dots, i_{k-1}})^{i_{k}}$.

If $F$ and $G$ are two faces of a polytope such that $F \leq G$, the \emph{section} $G/F$ is the restriction of the order of $\cP$ to the set $\left\{ H \in \cP : F \leq H \leq G \right\} $.
Note that if $\rk(F) = i$ and $\rk(G) = j$ then the section $G/F$ is an abstract polytope of rank $j-i-1$.
If $F_{0}$ is a vertex, then the \emph{vertex-figure} at $F_{0}$ is the section $F_{n}/F_{0}$. 
We sometimes identify each face $F$ with the section $F/F_{-1}$. 
In particular, every facet $F_{n-1}$ can be identified with the section $F_{n-1}/F_{-1}$ of rank $n-1$.

The \emph{dual} of an abstract polytope $(\cP, \leq)$, commonly denoted by $\cP^{\ast}$, is the partially ordered set $(\cP, \leq_{\ast})$ where $F \leq_{\ast} G$ if and only if $F \geq G$. 

Given an abstract $n$-polytope $\cK$, an $(n+1)$-polytope $\cP$ is an \emph{extension} of $\cK$ if all the facets of $\cP$ are isomorphic to $\cK$.

For $i \in \left\{ 1, \dots, n-1 \right\} $, if $F$ is an $(i-1)$-face and $G$ is an $(i+2)$-face with $F \leq G$ then the section $G/F$ is a $2$-polytope. 
Therefore $G/F$ is isomorphic to a $p_{i}$-gon for some $p_{i} \in \left\{ 2, \dots, \infty \right\} $. 
If the number $p_{i}$ does not depend on the particular choice of $F$ and $G$ but only on $i$, then we say that  $\left\{ p_{1}, \dots, p_{n-1} \right\}$ is the \emph{Schläfli symbol} of $\cP$. 
In this situation sometimes we just say that $\cP$ is of \emph{type} $\left\{ p_{1}, \dots, p_{n-1} \right\} $.
Note that if $\cP$ is an $n$-polytope of type $\left\{ p_{1}, \dots, p_{n-1} \right\} $, then the the facets of $\cP$ are of type $\left\{ p_{1}, \dots, p_{n-2} \right\} $.
In particular, if $\cP$ is an extension of $\cK$, and $\cP$ has a well-defined Schläfli symbol, then all but the last entry of this symbol are determined by $\cK$.

An \emph{automorphism} of an abstract polytope $\cP$ is an order-preserving bijection $\gamma: \cP \to \cP$.
The group of automorphism of $\cP$ is denoted by $\aut(\cP)$.
The group $\aut(\cP)$ acts naturally on $\fl(\cP)$, the set of flags of $\cP$. 
This action satisfies \[\Psi^{i} \gamma = \left( \Psi \gamma \right)^{i}\] for every flag $\Psi$, $i \in \left\{ 0, \dots, n-1 \right\} $ and $\gamma \in \aut(\cP)$.
Moreover, as a consequence of the strong-flag-connectivity, the action  of $\aut(\cP)$ on $\fl(\cP)$ is free.

Let $\baseFlag = \left\{ F_{-1}, \dots, F_{n} \right\} $ be a \emph{base flag} of $\cP$  such that $\rk(F_{i}) = i$.
Let $\Gamma \leq \aut(\cP)$ and for $I \subset \left\{ 0, \dots, n-1 \right\} $ let $\Gamma_{I}$ denote the set-wise stabiliser of the chain $\left\{ F_{i} : i \not\in I \right\} \subset \baseFlag $. 
Note that for every pair is subsets $I, J \subset \left\{ 0, \dots, n-1 \right\} $ we have 
\begin{equation}\label{eq:intProperty}
	\Gamma_{I} \cap \Gamma_{J} = \Gamma_{I \cap J}
\end{equation}
We call this condition the \emph{intersection property} for $\Gamma$.

An abstract polytope is $\emph{regular}$ if the action of $\aut(\cP)$ on $\fl(\cP)$ is transitive (hence, regular).
Abstract regular polytopes have been traditionally the most studied family of polytopes. 
Most of their wide theory can be found in \cite{McMullenSchulte_2002_AbstractRegularPolytopes}.

A \emph{rooted polytope} is a pair $(\cP, \baseFlag)$ where $\cP$ is a polytope and $\baseFlag$ is a fixed base flag.
If $\cP$ is regular, then every two flags are equivalent under $\aut(\cP)$ and the choice of a particular base flag plays no relevant role. 
However, if $\cP$ is not regular, then the choice of the base flag is important.
See \cite{CunninghamPellicer_2018_OpenProblems$k$} for a discussion on rooted $k$-orbit polytopes.
 
If $\Proot$ is a regular rooted polytope, then for every $i \in \left\{ 0, \dots, n-1 \right\} $ there exists an automorphism $\rho_{i}$ such that \[\baseFlag\rho_{i} = \baseFlag^{i}.\]
We call the automorphisms $\rho_{0}, \dots, \rho_{n-1}$ the \emph{abstract reflections} (with respect to the base flag $\baseFlag$). 
It is easy to see that if $\cP$ is a regular $n$-polytope, then $\aut(\cP) = \left\langle \rho_{0}, \dots, \rho_{n-1} \right\rangle $.
It is important to remark that the group elements depend on $\baseFlag$. 
However, since $\aut(\cP)$ is transitive on flags, the choice of the base flag induces a group-automorphism of $\aut(\cP)$.
More precisely, let $\Phi$ and $\Psi$ be flags of a regular $n$-polytope $\cP$ and let $\rho_{0}, \dots, \rho_{n-1}$ and $\rho'_{0}, \dots, \rho'_{n-1}$ denote the abstract reflections with respect to $\Phi$ and $\Psi$ respectively. 
If $\gamma \in \aut(\cP)$ is such that $\Phi \gamma  = \Psi$, then $\rho'_{i} = \gamma^{-1} \rho_{i} \gamma$.

Note that every regular polytope has a well-defined Schläfli symbol.
For $\cP$  is a regular $n$-polytope of type $\left\{ p_{1}, \dots, p_{n-1} \right\} $, then the abstract reflections satisfy 
\begin{equation}\label{eq:relsRhos}
	\begin{aligned} 
		\rho_{i}^{2} &= \id && \text{for } i \in \left\{ 0, \dots, n-1 \right\} ,\\
		\left( \rho_{i} \rho_{j} \right)^{2} &= \id && \text{if } |i-j| \geq 2, \\
		\left( \rho_{i-1} \rho_{i}\right)^{p_{i}} &= \id && \text{for } i \in \left\{ 1, \dots, n-1 \right\}. 
	\end{aligned}
\end{equation}

If $\baseFlag = \left\{ F_{-1}, \dots, F_{n} \right\} $, the stabiliser of the chain $\left\{ F_{i} : i \not \in I \right\} $ is the group $\left\langle \rho_{i} : i \in I \right\rangle $.
It follows that for regular polytopes the intersection property in \cref{eq:intProperty} for $\aut(\cP)$ is equivalent to
\begin{equation}\label{eq:interPropReg}
 \left\langle \rho_{i} : i \in I \right\rangle \cap \left\langle \rho_{j}: j \in J \right\rangle = \left\langle \rho_{k} : k \in I \cap J \right\rangle   
\end{equation}
for every pair of sets $I, J \subset \left\{ 0, \dots, n-1 \right\} $.

A group $\left\langle \rho_{0}, \dots, \rho_{n-1} \right\rangle $ satisfying \cref{eq:relsRhos,eq:interPropReg} is a \emph{string C-group}. 
Clearly, the automorphism group of an abstract polytope is a string C-group.
One of the most remarkable facts in the theory of highly symmetric polytopes is the correspondence between string C-groups and abstract regular polytopes. 
To be precise, for every string C-group $\Gamma$, there exists an abstract regular polytope $\cP = \cP(\Gamma)$ such that $\aut(\cP) = \Gamma$. 
This fact was first proved by Schulte in \cite{Schulte_1980_RegulareInzidenzkomplexe_phdThesis} for \emph{regular incidence complexes}, which are structures slightly more general than abstract polytopes. 
A detailed proof can be found in \cite[Section 2E]{McMullenSchulte_2002_AbstractRegularPolytopes}. See also \cite{Schulte_1983_RegulareInzidenzkomplexe.Ii,Schulte_2018_RegularIncidenceComplexes}.

The correspondence mentioned above has been used to build families of abstract regular polytopes with prescribed desired properties. 
For instance in \cite{Schulte_1983_ArrangingRegularIncidence} and \cite{Schulte_1985_ExtensionsRegularComplexes} some universal constructions are explored. 
In another direction, in \cite{CameronFernandesLeemansMixer_2017_HighestRankPolytope, FernandesLeemans_2018_CGroupsHigh, LeemansMoerenhoutOReillyRegueiro_2017_ProjectiveLinearGroups, Pellicer_2008_CprGraphsRegular} some abstract regular polytopes with prescribed (interesting) groups are investigated. 
Of particular interest for this paper is the work in \cite{Danzer_1984_RegularIncidenceComplexes, Pellicer_2009_ExtensionsRegularPolytopes, Pellicer_2010_ExtensionsDuallyBipartite}, where the problem of determining the possible values of the last entry of the Schläfli symbol of a regular extension of a given regular polytope is addressed. 

\subsection{Rotary polytopes} 
Abstract regular polytopes are those with maximal degree of reflectional symmetry. 
A slightly weaker symmetry condition than regularity for an abstract polytope is to admit all possible rotational symmetries.
In a similar way as it has been done for maps (see \cite{Wilson_1978_RiemannSurfacesOver}, for example), we call these polytopes \emph{rotary polytopes}. 
In this section, we review some of the theory of rotary polytopes.
Most of this theory is developed in \cite{SchulteWeiss_1991_ChiralPolytopes}.

The \emph{flag-graph} of a polytope $\cP$, denoted by $\cG_{\cP}$, is the edge-coloured graph whose vertex-set is the set $\fl(\cP)$ of flags and two flags are connected by an edge of colour $i$ if they are $i$-adjacent. 
An abstract polytope $\cP$ is \emph{orientable} if $\cG_{\cP}$ is bipartite. 
Otherwise, $\cP$ is \emph{non-orientable}.
If $\Proot$ is a rooted orientable polytope, the set of flags on the same part as $\baseFlag$ are the \emph{white flags}.
We denote this set by $\fw(\cP)$.
The flags on the other part are the \emph{black flags}.
This is just another way of calling what in \cite{SchulteWeiss_1991_ChiralPolytopes} are called \emph{even} and \emph{odd} flags.
By convenience, if $\cP$ is non-orientable, we say that $\fw(\cP) = \fl (\cP)$.
In other words, every flag is white.

If $\cP$ is an abstract polytope, then the \emph{rotational group} $\autp(\cP) $ of $\cP$ is the subgroup of $\aut(\cP)$ that permutes the set of white flags. 
A polytope is \emph{rotary} if $\autp(\cP)$ acts transitively on the set of white flags. 
It is clear the a rotary non-orientable polytope is a regular polytope. 
Therefore, we restrict our discussion below to orientable polytopes.
Note that the choice of the base flag of $\cP$ plays a stronger role now. 
In particular, it defines the set of white flags.
In the discussion below it is assumed that $\cP$ is actually a rooted polytope $\Proot$.

If $\Proot$ is a  polytope, for every $i \in \left\{ 1, \dots, n-1 \right\} $ the flag $\baseFlag^{i,i-1}$ is a white flag.
Therefore, if $\cP$ is a rotary polytope, there exists an automorphism $\sigma_{i}$ such that \[\baseFlag \sigma_{i} = \baseFlag^{i, i-1}.\]
The automorphisms $\sigma_{1}, \dots, \sigma_{n-1}$ are called the \emph{abstract rotations} with respect to $\baseFlag$.
It is easy to see that $\autp(\cP) = \left\langle \sigma_{1}, \dots, \sigma_{n-1} \right\rangle $. 
This is also a good point to emphasise that the abstract rotations depend on the choice of the base flag.

If $\cP$ is a rotary $n$-polytope, then $\cP$ has a well-defined Schläfli symbol. 
If $\cP$ is of type $\left\{ p_{1}, \dots, p_{n-1} \right\} $ the automorphisms $\sigma_{1}, \dots, \sigma_{n-1}$ satisfy 
\begin{equation}\label{eq:relsSigmas}
 \begin{aligned}
		 	  \sigma_{i}^{p_{i}} &= \id, && \text{and} \\
	  (\sigma_{i} \sigma_{i+1} \cdots \sigma_{j})^{2} &= \id && \text{for}\ 1 \leq i < j \leq n-1.
 \end{aligned}
\end{equation}

Sometimes it is useful to consider an alternative set of generators for $\autp(\cP)$. 
For $i,j \in \{0, \dots, n-1\}$ with $i <j$ we define the automorphisms 
\[\tau_{i,j} = \sigma_{i+1}\cdots \sigma_{j}.\] 
Note that this is a small change with respect to the notation of \cite[Eq. 5]{SchulteWeiss_1991_ChiralPolytopes}. 
What they call $\tau_{i,j}$ for us is $\tau_{i-1,j}$. 
Observe that $\tau_{i-1,i} = \sigma_{i}$ for $i \in \{1, \dots, n-1\}$. It is also convenient to define $\tau_{j,i} = \tau_{i,j}^{-1}$ for $i < j$ and $\tau_{-1,j}= \tau_{i,n} = \tau_{i,i} = \id$ for every $i,j \in \{0, \dots, n-1\}$. 
In particular, we have that $\langle \tau_{i,j} : i,j \in \{0, \dots, n-1\} \rangle = \langle \sigma_{1}, \dots, \sigma_{n-1} \rangle$. 
We also have \[\baseFlag\tau_{i,j}= \baseFlag^{j,i}.\]

Moreover, if $\baseFlag = \left\{ F_{-1}, \dots, F_{n} \right\} $, and $I \subset \left\{ 0, \dots, n-1 \right\} $, then the stabiliser of the chain $\left\{ F_{i} : i \not\in I  \right\} $ is the group $\left\langle \tau_{i,j} : i,j \in I \right\rangle $.
It follows that the intersection property in \cref{eq:intProperty} for $\autp(\cP)$ can be written as
\begin{equation}\label{eq:intPropertyChiral}
 \left\langle \tau_{i,j} : i,j \in I \right\rangle \cap \left\langle \tau_{i,j} : i,j \in J \right\rangle = \left\langle \tau_{i,j} : i,j \in I \cap J \right\rangle.  
\end{equation}

If $\cP$ is a regular polytope with automorphism group $\aut(\cP) = \left\langle \rho_{0}, \dots, \rho_{n-1} \right\rangle $, then $\cP$ is rotary with $\autp(\cP) = \left\langle \sigma_{1}, \dots, \sigma_{n-1} \right\rangle $ where $\sigma_{i} = \rho_{i-1} \rho_{i}$. 
If $\cP$ is also orientable, then $\autp(\cP)$ is a proper subgroup of $\aut(\cP)$ of index $2$.
Furthermore, $\autp(\cP)$ induces two flag-orbits, namely, the white flags and the black flags.

If $\cP$ is rotary but not regular, then $\aut(\cP) = \autp(\cP)$ and this group induces precisely two orbits in flags in such a way that adjacent flags belong to different oribts. 
In this case we say that $\cP$ is \emph{chiral}. 
Chiral polytopes were introduced by Schulte and Weiss in \cite{SchulteWeiss_1991_ChiralPolytopes} as a combinatorial generalisation of Coxeter's twisted honeycombs in \cite{Coxeter_1970_TwistedHoneycombs}.

If $(\cP,\baseFlag)$ is a rooted chiral polytope, the \emph{enantiomorphic form of $\cP$}, usually denoted by $\bar{\cP}$, is the rooted polytope $(\cP, \baseFlag^{0})$.
In the classic development of the theory of chiral polytopes, the enantiomorphic form of $\cP$ is usually thought as the mirror image of $\cP$; as a polytope that is different (but isomorphic) from $\cP$.
However, when treated as rooted polytopes it is clear that the only difference is the choice of the base flag.
The underlying partially ordered set is exactly the same. 
For a traditional but detailed discussion about enantiomorphic forms of chiral polytopes we suggest \cite[Section 3]{SchulteWeiss_1994_ChiralityProjectiveLinear}.

The automorphism group of $\bar{\cP}$ is generated by the automorphisms $\sigma'_{1}, \dots, \sigma'_{n-1}$ where $(\baseFlag^{0}) \sigma'_{i} = (\baseFlag^{0})^{i,i-1}$.
It is easy to verify that $\sigma'_{1} = \sigma_{1}^{-1} $, $\sigma_{2} = \sigma_{1}^{2} \sigma_{2}$ and for $i \geq 3$, $\sigma'_{i} = \sigma_{i}$. 
If $\cP$ is orientably regular then conjugation by $\rho_{0}$ defines a group automorphism $\rho: \autp(\cP) \to \autp(\cP)$ that maps $\sigma_{i}$ to $\sigma'_{i}$. 

The relations in \cref{eq:relsSigmas} together with the intersection property in \cref{eq:intPropertyChiral} characterise the rotation groups of the rotary polytopes.
Moreover, the existence of the group-automorphism $\rho: \autp(\cP) \to \autp(\cP)$ mentioned above determines whether or not the rotary polytope is regular.
More precisely, the following result hold.

\begin{thm}[{Theorem 1 in \cite{SchulteWeiss_1991_ChiralPolytopes}}]\label{thm:chiralGroups}
Let $3\leq n$, $2 < p_{1}, \dots, p_{n-1} \leq \infty$ and $\Gamma=\langle \sigma_{1}, \dots, \sigma_{n-1} \rangle$. For every $i,j \in \{-1, \dots, n\}$, with $i \neq j$ define
\[\tau_{i,j}=
\begin{cases}
	\id & \text{if}\ i<j\ \text{and}\ i=-1\ \text{or}\ j=n,\\ 
  \sigma_{i+1} \cdots \sigma_{j} & \text{if}\ 0\leq i<j\leq n-1, \\
  \sigma^{-1}_{j} \cdots \sigma^{-1}_{i+1} & \text{if}\ 0\leq j<i\leq n-1. 
\end{cases}
\]

Assume that $\Gamma$ satisfies the relations in  \cref{eq:relsSigmas}. Assume also that \cref{eq:intPropertyChiral} holds. Then
\begin{enumerate}
  \item\label[part]{part:existence} There exists a rotary polytope $\cP=\cP(\Gamma)$ such that $\autp(\cP)=\Gamma$ and $\sigma_{1}, \dots, \sigma_{n-1}$ act as abstract rotations for some flag of $\cP$.
  \item\label[part]{part:facets} $\cP$ is of type $\{p_{1}, \dots, p_{n-1}\}$. The facets and vertex-figures of $\cP$ are isomorphic to $\cP(\langle\sigma_{1}, \dots, \sigma_{n-2}\rangle)$ and $\cP(\langle \sigma_{2}, \dots, \sigma_{n-1})\rangle$, respectively. In general if $n \geq 4$ and $1 \leq k < l \leq n-1$, $F$ is a $(k-2)$-face and $G$ is an incident $(l+1)$-face, then the section $G/F$ is a rotary $(l-k+2)$-polytope isomorphic to $\cP(\langle \sigma_{k} \dots, \sigma_{l} \rangle)$. 
  \item\label[part]{part:chirality} $\cP$ is orientably regular if and only if there exists an involutory group automorphism $\rho:\Gamma \to \Gamma$ such that $\rho: \sigma_{1} \mapsto \sigma^{-1}_{1}$, $\rho: \sigma_{2} \mapsto \sigma_{1}^{2}\sigma_{2}$ and $\rho: \sigma_{i}\mapsto \sigma_{i}$ for $i\geq 3$.
\end{enumerate}
\end{thm}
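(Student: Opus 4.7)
The plan is to construct $\cP = \cP(\Gamma)$ as an incidence geometry whose faces are cosets in $\Gamma$, imitating the classical coset construction of regular polytopes from string C-groups but adapted to the rotation group. For each $i \in \{0, \dots, n-1\}$ set
\[
\Gamma_{\hat{\imath}} = \langle \tau_{j,k} : j,k \in \{0,\dots,n-1\}\setminus\{i\}\rangle,
\]
and let $\Gamma_{\hat{-1}} = \Gamma_{\hat{n}} = \Gamma$. Declare the $i$-faces of $\cP$ to be the right cosets $\Gamma_{\hat{\imath}}\varphi$, and define $\Gamma_{\hat{\imath}}\varphi \leq \Gamma_{\hat{\jmath}}\psi$ (for $i \leq j$) by non-empty intersection of cosets. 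The base flag is $\baseFlag = \{\Gamma_{\hat{\imath}} : -1 \leq i \leq n\}$, and $\Gamma$ acts by right multiplication.

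For \cref{part:existence}, I would verify the polytope axioms in turn. Reflexivity, antisymmetry and transitivity of the partial order follow from the intersection property \cref{eq:intPropertyChiral} applied to pairs and triples of indices. Strong flag-connectivity is immediate from the fact that $\Gamma$ is generated by the $\sigma_i$, once one checks that each $\sigma_i$ moves the base flag to a flag differing only in ranks $i-1$ and $i$. The diamond condition reduces to showing that for each $i$ the set of cosets of $\Gamma_{\hat{\imath}}$ sitting inside $\Gamma_{\hat{\imath-1}}\Gamma_{\hat{\imath}} \cap \Gamma_{\hat{\imath+1}}\Gamma_{\hat{\imath}}$ has exactly two elements; exhibiting $\id$ and $\sigma_i$ as distinct representatives uses the relations \cref{eq:relsSigmas}, while ruling out a third representative is exactly where the intersection property \cref{eq:intPropertyChiral} is needed. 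Once the axioms hold, the orbit of $\baseFlag$ under $\Gamma$ yields one of the two flag-orbits (the white flags), the black flags are obtained by $i$-adjacencies, and a direct check confirms $\autp(\cP) = \Gamma$ with the $\sigma_i$ acting as the abstract rotations with respect to $\baseFlag$.

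For \cref{part:facets}, the Schläfli entry $p_i$ is read from the order of $\sigma_i$ after inspecting the $2$-section $G/F$ with $F$ the base $(i-2)$-face and $G$ the base $(i+1)$-face. The identification of facets, vertex-figures and general sections with $\cP(\langle \sigma_k, \dots, \sigma_l\rangle)$ is obtained by restricting the construction to the appropriate subgroup, for which the relations in \cref{eq:relsSigmas} restrict trivially and the intersection property is inherited from \cref{eq:intPropertyChiral}. For \cref{part:chirality}, if $\cP$ is orientably regular then a reflection $\rho_0 \in \aut(\cP) \setminus \autp(\cP)$ with $\baseFlag \rho_0 = \baseFlag^0$ exists, conjugation by $\rho_0$ preserves $\Gamma$, and a direct computation using $\sigma_i = \rho_{i-1}\rho_i$ together with the commutation of $\rho_0$ with $\rho_i$ for $i \geq 2$ gives
\[
\rho_0^{-1}\sigma_1\rho_0 = \sigma_1^{-1}, \qquad \rho_0^{-1}\sigma_2\rho_0 = \sigma_1^2\sigma_2, \qquad \rho_0^{-1}\sigma_i\rho_0 = \sigma_i \ (i \geq 3),
\]
yielding the required involutory $\rho$. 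Conversely, given such $\rho$, I would define $\rho_0$ on flags by sending the white flag $\baseFlag \gamma$ to the black flag $\baseFlag^0 \rho(\gamma)$, extend to black flags by reversing, and check that the resulting permutation respects adjacencies, exhibiting $\cP$ as regular.

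The main obstacle is the diamond condition in \cref{part:existence}: unlike the regular case, a rotation $\sigma_i$ shifts two ranks simultaneously, so there is no single involution in $\Gamma$ that swaps the two $i$-faces between a given $(i-1)$-face and $(i+1)$-face. The second $i$-face must instead be produced as a coset representative whose uniqueness is forced by the intersection property applied to carefully chosen subsets of generators, and orchestrating this count coherently across all $i$ — so that the global poset structure patches together consistently — is the technical heart of the theorem.
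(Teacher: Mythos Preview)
The paper does not give its own proof of this theorem: it is quoted verbatim as Theorem~1 of \cite{SchulteWeiss_1991_ChiralPolytopes} and used as a black box. Your proposal is the standard coset-geometry construction, which is precisely the argument Schulte and Weiss carry out in the original reference, so there is nothing to compare against in the present paper; your outline is correct and matches the source.
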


In \cref{sec:dually} we use $\{\sigma_{1}, \dots, \sigma_{n-2}, \tau \}$, with $\tau = \sigma_{n-2}\sigma_{n-1}$, as an alternative generating set for $\autp(\cP)$. 
The following lemma describe the relations in \cref{eq:relsSigmas} in terms of these generators.

\begin{lem}[{Lemma 2.1 in \cite{Pellicer_2010_ConstructionHigherRank}}]\label{lem:relsOfExtensionsSigmasAndTau}
  Let $n \geq 4$ and $\Gamma = \langle \sigma_{1}, \dots, \sigma_{n-1} \rangle$ be a group with the property that the subgroup $\Gamma_{n-1} = \langle \sigma_{1}, \dots, \sigma_{n-2} \rangle$ satisfies the relations in  \cref{eq:relsSigmas}. 
  Let $\tau = \sigma_{n-2} \sigma_{n-1}$. 
  Then the set of relations in \cref{eq:relsSigmas} is equivalent to the set of relations
  \begin{equation}\label{eq:relsOfExtensionsSigmasAndTau}
    \begin{aligned}
      \tau^{2} &= \id, \\
      \tau \sigma_{n-3}\tau &= \sigma_{n-3}^{-1},\\
      \tau \sigma_{n-4} \tau &= \sigma_{n-4}\sigma_{n-2}^{2},\\
      \tau \sigma_{i}\tau &= \sigma_{i} && \text{for}\ 1 \leq i < n-4.
    \end{aligned}
  \end{equation}
\end{lem}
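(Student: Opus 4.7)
The plan is to use the substitution $\sigma_{n-1} = \sigma_{n-2}^{-1}\tau$ to translate, one by one, the relations of \eqref{eq:relsSigmas} that involve $\sigma_{n-1}$ into relations involving $\tau$, and to check that this translation produces exactly \eqref{eq:relsOfExtensionsSigmasAndTau}. Since the relations of \eqref{eq:relsSigmas} internal to $\Gamma_{n-1}$ are already assumed, the content of the lemma is the equivalence of the ``new'' involutions $(\sigma_{i}\sigma_{i+1}\cdots\sigma_{n-1})^{2} = \id$, for $i \in \{1, \dots, n-2\}$, with the four families of relations in \eqref{eq:relsOfExtensionsSigmasAndTau}. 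I would prove both implications in turn.

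For $\eqref{eq:relsSigmas} \Longrightarrow \eqref{eq:relsOfExtensionsSigmasAndTau}$: the case $i = n-2$ gives $\tau^{2} = \id$ immediately, and the case $i = n-3$ unfolds $(\sigma_{n-3}\tau)^{2} = \id$ into $\tau\sigma_{n-3}\tau = \sigma_{n-3}^{-1}$. For $i \leq n-4$ I would rewrite $(\sigma_{i}\cdots\sigma_{n-3}\tau)^{2} = \id$ as $\tau\sigma_{i}\sigma_{i+1}\cdots\sigma_{n-3}\tau = (\sigma_{i}\cdots\sigma_{n-3})^{-1}$, insert copies of $\tau^{2} = \id$ between consecutive $\sigma_{j}$'s, and solve for $\tau\sigma_{i}\tau$ using the conjugations already computed for the larger indices. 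In the case $i = n-4$, this isolation produces $\tau\sigma_{n-4}\tau = \sigma_{n-3}^{-1}\sigma_{n-4}^{-1}\sigma_{n-3}$, and the $\Gamma_{n-1}$-relation $(\sigma_{n-4}\sigma_{n-3})^{2} = \id$ rewrites the right-hand side in the form stated in \eqref{eq:relsOfExtensionsSigmasAndTau}. For $i < n-4$, after substituting the known values of $\tau\sigma_{n-4}\tau$ and $\tau\sigma_{n-3}\tau$, the remaining $\Gamma_{n-1}$-relation $(\sigma_{i}\cdots\sigma_{n-3})^{2} = \id$ forces $\tau\sigma_{i}\tau = \sigma_{i}$, which is an inductive step on $i$.

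The converse direction $\eqref{eq:relsOfExtensionsSigmasAndTau} \Longrightarrow \eqref{eq:relsSigmas}$ is essentially the reverse calculation. Starting from the relations in \eqref{eq:relsOfExtensionsSigmasAndTau}, I would expand $(\sigma_{i}\cdots\sigma_{n-3}\tau)^{2}$ for each $i \in \{1,\dots,n-2\}$, push both copies of $\tau$ through the intermediate $\sigma_{j}$'s by repeated application of the conjugation rules given, and use $\tau^{2} = \id$ to annihilate them; the expression that remains collapses to $\id$ by the $\Gamma_{n-1}$-relations. These are the same algebraic identities as in the forward direction read backwards.

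The main obstacle is the bookkeeping in the case $i = n-4$: there the $\tau$-conjugate of $\sigma_{n-4}$ is neither a pure inversion nor a pure commutation but an inhomogeneous expression, and extracting it requires combining $\tau\sigma_{n-3}\tau = \sigma_{n-3}^{-1}$ with the braid-type relation $(\sigma_{n-4}\sigma_{n-3})^{2} = \id$ at exactly the right moment. Once this step is settled, the cases $i < n-4$ fall out inductively because the effect of $\tau$-conjugation has already been localized to the tail $\sigma_{n-4}\sigma_{n-3}$ of the word, and the remaining commutations follow directly from the corresponding relations in $\Gamma_{n-1}$.
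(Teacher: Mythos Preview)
The paper does not actually prove this lemma; it is quoted from \cite{Pellicer_2010_ConstructionHigherRank} without argument. Your plan---substituting $\sigma_{n-1}=\sigma_{n-2}^{-1}\tau$, so that each new relation $(\sigma_{i}\cdots\sigma_{n-1})^{2}=\id$ becomes $(\sigma_{i}\cdots\sigma_{n-3}\tau)^{2}=\id$, and then peeling off $\tau\sigma_{i}\tau$ by downward induction on $i$---is exactly the standard proof and is correct in both directions.

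One caution about the case $i=n-4$: your isolation gives $\tau\sigma_{n-4}\tau=\sigma_{n-3}^{-1}\sigma_{n-4}^{-1}\sigma_{n-3}$, and the $\Gamma_{n-1}$-relation $(\sigma_{n-4}\sigma_{n-3})^{2}=\id$ simplifies this to $\sigma_{n-4}\sigma_{n-3}^{2}$, \emph{not} to $\sigma_{n-4}\sigma_{n-2}^{2}$ as printed in \cref{eq:relsOfExtensionsSigmasAndTau}. The printed $\sigma_{n-2}^{2}$ is a typographical slip in the statement; indeed, when the lemma is invoked in the proof of \cref{prop:chiralExtensionDBP} (with the rank shifted up by one), the index appearing there agrees with the correct formula $\sigma_{n-4}\sigma_{n-3}^{2}$. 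So your derivation is right---just do not try to force your intermediate expression to match the displayed $\sigma_{n-2}^{2}$, as that identity does not hold in general.
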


%
\subsection{Maniplexes, connection group, coverings and quotients}.
Sometimes it is useful to work in a wider universe than that of abstract polytopes. 
The universe of maniplexes will work for our purposes.
Maniplexes were introduced by Wilson in \cite{Wilson_2012_ManiplexesPart1} as a combinatorial generalisation of maps to higher dimensions.  
A maniplex of rank $n$ (or $n$-maniplex) is a connected graph resembling the flag graph of a polytope.
Formally, a \emph{maniplex} or rank $n$ (or $n$-maniplex) $\cM$ is a connected, $n$-valent graph whose nodes are the \emph{flags} of $\cM$ and whose edges are properly coloured with $\left\{ 0, \dots n-1 \right\} $, meaning that for every $i \in \left\{ 0, \dots, n-1 \right\} $, the edges with label $i$ determine a perfect matching of $\cM$. 
The set of flags of $\cM$ is denoted by $\fl(\cM)$.
We also require that if $i,j \in \left\{ 0, \dots, n-1 \right\} $ and $|i-j|\geq 2$, then the connected components of $\cM$ determined by the edges of colours $i$ and $j$ are alternating squares.
Note the slight change of notation: what we call an $n$-maniplex is called an $(n-1)$-maniplex in \cite{ CunninghamPellicer_2018_OpenProblems$k$, Wilson_2012_ManiplexesPart1}. 

Every $n$-polytope $\cP$ determines an $n$-maniplex through its flag-graph and it is usually safe $\cP$ with the maniplex induced by $\cG_{\cP}$
In general, not every maniplex induces a polytope.
For a discussion about polytopality of maniplexes and examples of maniplexes that are not polytopes see \cite{GarzaVargasHubard_2018_PolytopalityManiplexes}.

A facet of an $n$-maniplex $\cM$ is a connected component of the graph resulting of removing the edges of colour $n-1$ from $\cM$. 

An \emph{automorphism} of a maniplex $\cM$ is a colour-preserving graph automorphism. 
The {automorphism group} of $\cM$ is denoted by $\aut(\cM)$. 
Note that the group $\aut(\cM)$ acts freely on $\fl(\cM)$. 
The notions of rooted, regular, rotary and chiral maniplexes extend in a naturally from polytopes to maniplexes.

Note that if $\cM$ is a maniplex and $i \in \{0, \dots, n-1\}$, then the edges of colour $i$ define an involutory permutation $r_{i}$ such that $r_{i}(\Phi) = \Phi^{i}$ for every $\Phi \in \fl(\cM)$. 
The \emph{connection group} of $\cM$ is group $\con(\cM) = \left\langle r_{0}, \dots, r_{n-1} \right\rangle$ (also known  as the \emph{monodromy group} in \cite{MonsonPellicerWilliams_2014_MixingMonodromyAbstract} and elsewhere). 
Note that in general the connection elements do not define automorphism.
Also observe that in this paper we use a left action for connection elements. 

If $\cM$ is a maniplex, then the permutations $r_{0}, \dots, r_{n-1}$ have the following properties: 
\begin{enumerate}
 \item\label{item:mani_1} The group $ \con(\cM) = \left\langle r_{0}, \dots, r_{n-1} \right\rangle $ acts transitively on the set $\fl(\cM)$. 
 \item\label{item:mani_2} Each permutation $r_{i}$ is fixed-point-free.
 \item\label{item:mani_3} If $i \neq j$ and $\Phi$ is any flag, then $r_{i} \Phi \neq r_{j} \Phi$.
 \item\label{item:mani_4} If $|i-j| \geq 2$, then  $r_{i} $ and $ r_{j} $ commute.
\end{enumerate}

Conversely, if $\fl$ is a set and $r_{0}, \dots r_{n-1}$ are permutations on $\fl$ satisfying \cref{item:mani_1,item:mani_2,item:mani_3,item:mani_4} above, then they define a maniplex $\cM$ with $\fl(\cM) = \fl$ and $\con(\cM) = \left\langle r_{0}, \dots, r_{n-1} \right\rangle $.
In  other words, we can define a maniplex by its underlying graph or through its connection group (see \cite{PellicerPotocnikToledo_2019_ExistenceResultTwo}, for example).
The facets of a maniplex are precisely the orbits of a flag under the action of $\left\langle r_{0}, \dots, r_{n-1} \right\rangle $.

A permutation $\gamma : \fl(\cM) \to \fl(\cM)$ induces an automorphism of $\cM$ if and only if 
\[\left( r_{i} \Phi \right) \gamma  = r_{i} (\Phi) \gamma\] for every flag $\Phi$.
 
It is well known that if $\cM$ is a regular maniplex then $\con(\cM) \cong \aut(\cM)$ with the isomorphism mapping $r_{i}$ to $\rho_{i}$ (see \cite[Theorem 3.9]{MonsonPellicerWilliams_2014_MixingMonodromyAbstract}). 
If $\cM$ is a rotary maniplex, then the action of $\autp(\cP)$ and the action of $\con(\cP)$ have an interesting relation.
This relation is described in the following proposition (see \cite[Lemma 2.5 and Proposition 2.7]{Pellicer_2010_ConstructionHigherRank}).

\begin{prop}\label{prop:autpAsMonPChir}
  Let $\cP$ be a rotary $n$-polytope with base flag $\baseFlag$. Let $\autp(\cP)=\langle \sigma_{1}, \dots,  \sigma_{n-1} \rangle$ and $\con(\cP)=\langle r_{0}, \dots, r_{n-1} \rangle$ be the rotation and connection groups of $\cP$ respectively. For $1 \leq i \leq n-1$, define $s_{i}=r_{i-1}r_{i}$ and consider the \emph{even connection group} $\conp(\cP) = \langle s_{1}, \dots, s_{n-1} \rangle$ of $\cP$. Then the following hold
\begin{enumerate}
  \item\label[part]{part:jumpsChir} For every $i_{1}, \dots,i_{k} \in \{1, \dots, n-1\}$ \[s_{i_{1}}\cdots s_{i_{k}}\baseFlag=\baseFlag\sigma_{i_{1}}\cdots\sigma_{i_{k}}.\]
  \item\label[part]{part:relationsChir} An element $s_{i_{1}}\cdots s_{i_{k}}\in\conp(\cP)$ fixes the base flag if and only if $\sigma_{i_{1}}\cdots \sigma_{i_{k}} = \epsilon$. 
  In this situation, $s_{i_{1}} \cdots s_{i_{k}}$ stabilises every white flag. 
  \item\label[part]{part:isomorphismChir} If $\fw(\cP)$ denotes the set of white flags of $\cP$, then there is an isomorphism $f: \conw(\cP) \to \autp(\cP)$ with $\conw(\cP)= \langle \bar{s}_{1}, \dots, \bar{s}_{n-1} \rangle$, where $\bar{s}_{i}$ denotes the permutation of $\fw(\cP)$ induced by $s_{i}$. 
  This isomorphism maps $\bar{s}_{i}$ to $\sigma_{i}$ for every $i \in \{1, \dots, n-1\}$.
\end{enumerate} 
\end{prop}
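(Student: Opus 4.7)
The guiding observation is that the left action of $\con(\cP)$ and the right action of $\aut(\cP)$ on $\fl(\cP)$ commute: for every $i$, every flag $\Psi$ and every $\gamma\in\aut(\cP)$, the flag $r_{i}(\Psi\gamma)$ and $(r_{i}\Psi)\gamma$ both equal the unique $i$-adjacent flag of $\Psi\gamma$. Iterating gives $w(\Psi\gamma)=(w\Psi)\gamma$ for every $w\in\con(\cP)$. This identity is the only tool one needs, once one also invokes the freeness of the $\aut(\cP)$-action on flags.

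For \cref{part:jumpsChir}, I would argue by induction on $k$. The case $k=1$ is immediate from the definitions: $s_{i}\baseFlag=r_{i-1}r_{i}\baseFlag=r_{i-1}\baseFlag^{i}=\baseFlag^{i,i-1}=\baseFlag\sigma_{i}$. For the induction step, write
\[s_{i_{1}}\cdots s_{i_{k}}\baseFlag=s_{i_{1}}\cdots s_{i_{k-1}}\bigl(\baseFlag\sigma_{i_{k}}\bigr)=\bigl(s_{i_{1}}\cdots s_{i_{k-1}}\baseFlag\bigr)\sigma_{i_{k}}=\baseFlag\sigma_{i_{1}}\cdots\sigma_{i_{k}},\]
where the middle equality uses the commutation identity and the last equality uses the inductive hypothesis. \Cref{part:relationsChir} then falls out: if $s_{i_{1}}\cdots s_{i_{k}}\baseFlag=\baseFlag$, then \cref{part:jumpsChir} gives $\baseFlag\sigma_{i_{1}}\cdots\sigma_{i_{k}}=\baseFlag$, and since the $\aut(\cP)$-action on flags is free, $\sigma_{i_{1}}\cdots\sigma_{i_{k}}=\id$; the converse is read off the same identity. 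For the second half of \cref{part:relationsChir}, I would use transitivity of $\autp(\cP)$ on white flags: every $\Psi\in\fw(\cP)$ has the form $\baseFlag\gamma$ for some $\gamma\in\autp(\cP)$, so one more application of the commutation identity yields $s_{i_{1}}\cdots s_{i_{k}}\Psi=(s_{i_{1}}\cdots s_{i_{k}}\baseFlag)\gamma=\baseFlag\gamma=\Psi$.

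For \cref{part:isomorphismChir}, note that each $s_{i}=r_{i-1}r_{i}$ preserves the bipartition of $\fl(\cP)$, so $\conp(\cP)$ acts on $\fw(\cP)$, and the restriction map $\conp(\cP)\to\conw(\cP)$, $s_{i}\mapsto\bar{s}_{i}$, is a surjective homomorphism whose kernel is exactly the pointwise stabiliser of $\fw(\cP)$. The map $\bar{s}_{i}\mapsto\sigma_{i}$ is well defined because, by \cref{part:relationsChir}, any word in the $\bar{s}_{i}$ that acts trivially on $\fw(\cP)$ in particular fixes $\baseFlag$, and hence the corresponding word in the $\sigma_{i}$ equals $\id$; surjectivity is clear from the generating sets; and injectivity is the converse direction of \cref{part:relationsChir}, together with the fact established above that an element of $\conp(\cP)$ fixing $\baseFlag$ fixes every white flag. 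The only conceptual subtlety—and really the only thing one has to be careful with—is keeping the bipartition clean in the orientably regular case, where $\conp(\cP)$ might act non-trivially on $\fl(\cP)\setminus\fw(\cP)$ even though it acts trivially on $\fw(\cP)$; passing to $\conw(\cP)$ is precisely what is needed to obtain an isomorphism rather than merely a surjection.
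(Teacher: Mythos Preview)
Your argument is correct. The commutation identity $w(\Psi\gamma)=(w\Psi)\gamma$ is exactly the right tool, and your induction for \cref{part:jumpsChir}, the freeness argument for \cref{part:relationsChir}, and the well-definedness/injectivity check for \cref{part:isomorphismChir} are all clean and complete. The only cosmetic point is that the paper also declares $\fw(\cP)=\fl(\cP)$ when $\cP$ is non-orientable; your sentence about preserving the bipartition should be read accordingly, but nothing in the argument actually breaks in that case.

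As for comparison: the paper does not prove this proposition at all---it is stated with a reference to \cite[Lemma~2.5 and Proposition~2.7]{Pellicer_2010_ConstructionHigherRank}. So there is no in-paper proof to compare against; what you have written is a self-contained proof of a result the paper simply imports.
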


Note that unlike the regular case, where there is a isomorphism from $\con(\cP)$ to $\aut(\cP)$ mapping $r_{i}$ to $\rho_{i}$, if $\cP$ is chiral, the mapping $g: \conp(\cP) \to \autp(\cP)$ defined by $s_{i} \mapsto \sigma_{i}$ is not an isomorphism. 
By \cref{part:isomorphismChir} of \cref{prop:autpAsMonPChir} this mapping is well defined, but in general it is not injective. 
In other words, there are non-trivial elements of $\conp(\cP)$ that fix every white flag of $\cP$. 
The subgroup of $\conp(\cP)$ containing all such elements (i.e. the kernel of $g$) is called the \emph{chirality group}. 
For some uses and properties of the chirality group see \cite[Section 3]{Cunningham_2012_MixingChiralPolytopes} and \cite[Section 7]{MonsonPellicerWilliams_2014_MixingMonodromyAbstract}.

To avoid confusion, we will try to avoid the use of $\conp(\cP)$ and instead use the group $\conw(\cP)$, which is the permutation group on $\fw(\cP)$ induced by the action of $\conp(\cP)$.
Since we will only use the action of $\conp(\cP)$ on white flags,  it is safe to abuse notation an identify $s_{i} = r_{i-1} r_{i} \in \conp(\cP)$ with $\bar{s_{i}} \in \conw(\cP)$, the permutation induced by $s_{i}$ on the set $\fw(\cP)$.

If $\cP$ and $\cQ$ are $n$-polytopes, we say that $\cP$ \emph{covers} $\cQ$ if there exists a surjective function $\phi: \cP \to \cQ$ such that $\phi$ preserves incidence, rank and flag-adjacency. 
In this situation, we say that $\phi$ is a \emph{covering} from $\cP$ to $\cQ$.
If $\Proot$ and $\Proot[\cQ][\Psi_{0}]$ are rooted polytopes, then $\Proot$ covers $\Proot[\cQ][\Psi_{0}]$ if there is a covering from $\cP$ to $\cQ$ that maps $\baseFlag$ to $\Psi_{0}$.
If $\cP$ covers $\cQ$ we also say that $\cQ$ is a \emph{quotient} of $\cP$.
If $\left( \cP, \baseFlag \right)$ covers $\left( \cQ, \Psi_{0} \right)$, then there exists a colour-preserving graph homomorphism from $\cG_{\cP} $ to $\cG_{\cQ}$ mapping $\baseFlag$ to $\Psi_{0}$. 
Conversely, if such a graph homomorphism exists, then $\left( \cP, \baseFlag \right)$ covers $\left( \cQ, \Psi_{0} \right)$ (see \cite[Proposition 3.1]{CunninghamPellicer_2018_OpenProblems$k$}). 
The previous observation allows us to extend naturally the notion of covering to maniplexes and rooted maniplexes. Moreover, for rotary maniplexes we have the following result. This is a consequence of \cref{part:isomorphismChir} of \cref{prop:autpAsMonPChir}.

\begin{prop}\label{prop:covers}
 Let $(\cM, \Phi_{0})$ and $(\cN, \Psi_{0})$ be two rooted rotary $n$- maniplexes. Let $\autp(\cM) = \left\langle \sigma_{1}, \dots, \sigma_{n-1} \right\rangle $, $\autp(\cN) = \left\langle \sigma'_{1}, \dots, \sigma_{n-1} \right\rangle $, $\conp(\cM) = \left\langle s_{1}, \dots, s_{n-1} \right\rangle $, $\conp(\cN) = \left\langle s'_{1}, \dots, s'_{n-1} \right\rangle $  denote their rotation and even connection groups.
 Then the following are equivalent. 
 \begin{enumerate}
  \item \label[part]{part:covers_graph} $(\cM, \Phi_{0})$ covers $(\cN,\Psi_{0})$.
  \item \label[part]{part:covers_con} There exists a group-homomorphism $\eta: \conp(\cM) \to \conp(\cN)$ mapping $s_{i}$ to $s'_{i}$.
  \item \label[part]{part:covers_aut} There exists a group-homomorphism $\bar{\eta}: \autp(\cM) \to \autp(\cN)$ mapping $\sigma_{i}$ to $\sigma'_{i}$.
 \end{enumerate}
\end{prop}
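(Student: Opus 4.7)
The strategy is to prove \cref{part:covers_graph} $\Leftrightarrow$ \cref{part:covers_aut} directly, and to obtain the equivalence with \cref{part:covers_con} from \cref{part:isomorphismChir} of \cref{prop:autpAsMonPChir}. That part supplies an isomorphism $\conw(\cM) \cong \autp(\cM)$ sending $\bar{s}_i \mapsto \sigma_i$, and an analogous isomorphism for $\cN$. Under the identification of $s_i \in \conp(\cM)$ with its permutation $\bar{s}_i \in \conw(\cM)$ announced just before the proposition, a homomorphism as in \cref{part:covers_con} is the same data as a homomorphism as in \cref{part:covers_aut}, so their equivalence is immediate.

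For \cref{part:covers_graph} $\Rightarrow$ \cref{part:covers_aut}, I would assume $\phi \colon \cM \to \cN$ is a covering sending $\baseFlag$ to $\Psi_0$. Because $\phi$ is color-preserving, it intertwines the connection action, so $\phi(s_i \Phi) = s'_i \phi(\Phi)$ for every $\Phi$. I would then define $\bar{\eta}$ on generators by $\sigma_i \mapsto \sigma'_i$ and check that it extends to a group homomorphism. For well-definedness, take $\sigma_{i_1} \cdots \sigma_{i_k} = \id$ in $\autp(\cM)$; by \cref{part:relationsChir} of \cref{prop:autpAsMonPChir}, the element $s_{i_1} \cdots s_{i_k}$ fixes $\baseFlag$, so applying $\phi$ gives $s'_{i_1} \cdots s'_{i_k} \Psi_0 = \Psi_0$, and invoking \cref{part:relationsChir} for $\cN$ yields $\sigma'_{i_1} \cdots \sigma'_{i_k} = \id$ in $\autp(\cN)$, as required.

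For \cref{part:covers_aut} $\Rightarrow$ \cref{part:covers_graph}, I would construct $\phi \colon \fl(\cM) \to \fl(\cN)$ using that $\autp(\cM)$ acts freely and transitively on $\fw(\cM)$ and, separately, on the set of black flags of $\cM$. Concretely, set $\phi(\baseFlag \sigma) = \Psi_0 \bar{\eta}(\sigma)$ on white flags and $\phi(\baseFlag^0 \sigma) = \Psi_0^0 \bar{\eta}(\sigma)$ on black flags. The remaining content of the proof is to verify color-preservation, $\phi(\Phi^i) = \phi(\Phi)^i$ for every flag $\Phi$ and every $i$, which will follow by a case analysis using the identity $\baseFlag \tau_{0, i} = \baseFlag^{i, 0}$ of the paper (and its analogue in $\cN$) to compare the two canonical representatives of each flag. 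The main obstacle I anticipate is exactly this bookkeeping -- in particular the exceptional case $i = 1$ where $r_0$ and $r_1$ do not commute, so that $\baseFlag^{0, 1} = \baseFlag \sigma_1^{-1}$ rather than $\baseFlag \sigma_1$ -- but the conceptual core of both directions is a single fact, namely \cref{part:relationsChir} of \cref{prop:autpAsMonPChir}, linking fixation of the base flag by words in the $s_i$ to triviality of the corresponding words in the $\sigma_i$.
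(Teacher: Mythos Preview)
Your argument is correct and matches the paper's approach: the paper gives no detailed proof, merely stating that the result ``is a consequence of \cref{part:isomorphismChir} of \cref{prop:autpAsMonPChir}'', and you have supplied exactly the natural details, with \cref{part:relationsChir} doing the real work in both directions. The anticipated obstacle at $i=1$ is not genuine: writing $\baseFlag^{i} = \baseFlag^{0}\tau_{0,i}$ (valid uniformly for all $i\geq 0$, since $\tau_{0,0}=\id$ and $\tau_{0,1}=\sigma_{1}$) makes the verification $\phi(\Phi^{i}) = \phi(\Phi)^{i}$ go through without any special case analysis.
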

%
%

Let $\cM$ and $\cN$ be rotary $n$-maniplexes with rotation groups $\autp(\cM) = \left\langle \sigma_{1}, \dots, \sigma_{n-1} \right\rangle $ and $\autp(\cN) = \left\langle \sigma'_{1}, \dots, \sigma'_{n-1} \right\rangle $.
We define the group
\begin{equation}\label{eq:mix}
\autp(\cM) \mix \autp(\cN) = \left\langle (\sigma_{1}, \sigma'_{1}), \dots (\sigma_{n-1}, \sigma'_{n-1})  \right\rangle \leq \autp(\cM)\times \autp(\cN).
\end{equation}

A straightforward consequence of \cref{part:covers_aut} is the following.
\begin{lem}\label{lem:covers}
 If $(\cM, \Phi_{0})$ and $(\cN, \Psi_{0})$ are rooted maniplexes such that $(\cM, \Phi_{0})$ covers $(\cN, \Psi_{0})$, then $\autp(\cM) \mix \autp(\cN) \cong \autp(\cM)$.
\end{lem}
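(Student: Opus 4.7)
The plan is to show that the projection onto the first coordinate
\[\pi_{1}: \autp(\cM) \mix \autp(\cN) \to \autp(\cM), \qquad (\sigma_{i}, \sigma'_{i}) \mapsto \sigma_{i},\]
is a group isomorphism. Surjectivity is immediate, since the image of $\pi_{1}$ contains the generating set $\{\sigma_{1}, \dots, \sigma_{n-1}\}$ of $\autp(\cM)$, so the only real work is injectivity.

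For injectivity, I would take a typical element of $\autp(\cM) \mix \autp(\cN)$ and write it as a word in the generators: any element has the form
\[(w(\sigma_{1}, \dots, \sigma_{n-1}),\, w(\sigma'_{1}, \dots, \sigma'_{n-1}))\]
for some word $w$ in $n-1$ letters and their inverses. If this element lies in the kernel of $\pi_{1}$, then $w(\sigma_{1}, \dots, \sigma_{n-1}) = \id$ in $\autp(\cM)$. Here is where the covering hypothesis enters: by \cref{part:covers_aut} of \cref{prop:covers}, there is a group homomorphism $\bar{\eta}: \autp(\cM) \to \autp(\cN)$ with $\bar{\eta}(\sigma_{i}) = \sigma'_{i}$. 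Applying $\bar{\eta}$ to the relation $w(\sigma_{1}, \dots, \sigma_{n-1}) = \id$ yields $w(\sigma'_{1}, \dots, \sigma'_{n-1}) = \id$, so the element is in fact the identity of $\autp(\cM) \times \autp(\cN)$. Hence $\ker \pi_{1}$ is trivial and $\pi_{1}$ is an isomorphism.

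There is no real obstacle here; the entire content of the lemma is that covering gives a homomorphism in the correct direction, which is already packaged in \cref{prop:covers}. The only subtlety is the well-definedness of the word-based argument, which is harmless because an arbitrary element of $\autp(\cM) \mix \autp(\cN)$ is by definition a product of the generators $(\sigma_{i}, \sigma'_{i})$, and such a product has both coordinates expressed by the same word in the respective generators.
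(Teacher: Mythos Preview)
Your proof is correct and matches the paper's approach: the paper states the lemma as a straightforward consequence of \cref{part:covers_aut} of \cref{prop:covers} and gives no further argument, and what you have written is exactly the natural way to unpack that consequence via the first-coordinate projection.
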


Note that if $\cM$ is of type $\left\{ q_{1}, \dots, q_{n-1} \right\} $ and $\cN$ is of type $\left\{ q'_{1}, \dots, q'_{n-1} \right\} $, then the group $\autp(\cM) \mix \autp(\cN)$ satisfy the relations in \cref{eq:relsSigmas} for $p_{i}= \lcm(q_{i}, q'_{i})$.
However, in general the group $\autp(\cM) \mix \autp(\cN)$ will not satisfy the intersection property in \cref{eq:intPropertyChiral}, even when $\cM$ and $\cN$ are polytopes. 
The following results can be used to prove that the group $\autp(\cM) \mix \autp(\cN)$ has the intersection propety under certain conditions.

\begin{lem}
[{Lemma 3.3 in \cite{DAzevedoJonesSchulte_2011_ConstructionsChiralPolytopes}}]
\label{lem:preQuotientCriterion}
Let $\Gamma = \left\langle \sigma_{1}, \dots, \sigma_{n-1} \right\rangle $ be a group satisfying \cref{eq:relsSigmas}, and let $\Lambda = \left\langle \lambda_{1}, \dots, \lambda_{n-1} \right\rangle $ be a group satisfying \cref{eq:relsRhos} and the intersectio property in \cref{eq:intPropertyChiral}. 
If the mapping $\sigma_{i} \mapsto \lambda_{i}$ for $i \in \left\{ 1, \dots, n-1 \right\} $ induces a homomorphism $\pi: \Gamma \to \Lambda$, which is one-to-one on $\left\langle \sigma_{1}, \dots, \sigma_{n-1} \right\rangle $, then $\Gamma$ also has the intersection property.
\end{lem}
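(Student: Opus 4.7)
The plan is to verify the intersection property for $\Gamma$ by transporting it from $\Lambda$ through the homomorphism $\pi$. Abbreviate $\Gamma_{I} = \langle \tau_{i,j} : i,j \in I\rangle$ and $\Lambda_{I} = \langle \pi(\tau_{i,j}) : i,j \in I\rangle$, and observe that the inclusion $\Gamma_{I \cap J} \subseteq \Gamma_{I} \cap \Gamma_{J}$ is automatic, so only the reverse inclusion demands argument.

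For the nontrivial direction I would take an arbitrary $w \in \Gamma_{I} \cap \Gamma_{J}$ and push it through $\pi$. Since $\pi$ sends each generator $\tau_{i,j}$ of $\Gamma_{I}$ to the corresponding generator of $\Lambda_{I}$, one has $\pi(\Gamma_{I}) \subseteq \Lambda_{I}$ and likewise $\pi(\Gamma_{J}) \subseteq \Lambda_{J}$. Hence $\pi(w) \in \Lambda_{I} \cap \Lambda_{J}$, which by the hypothesised intersection property of $\Lambda$ in \cref{eq:intPropertyChiral} equals $\Lambda_{I \cap J}$. Writing $\pi(w)$ explicitly as a word in the generators of $\Lambda_{I \cap J}$ and lifting this word letter-by-letter to a word in the matching generators of $\Gamma_{I \cap J}$ produces an element $v \in \Gamma_{I \cap J}$ with $\pi(v) = \pi(w)$.

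It now suffices to show that $w = v$. Both $w$ and $v$ lie in $\Gamma_{I}$, so $wv^{-1} \in \Gamma_{I} \cap \ker\pi$. The decisive step is to observe that $\Gamma_{I}$ is contained in the subgroup on which $\pi$ is assumed to be one-to-one; once that is established, $wv^{-1} = \id$ and hence $w = v \in \Gamma_{I \cap J}$, which closes the argument. The main obstacle is precisely this containment: if the injectivity were only guaranteed on a proper subgroup such as the facet-like $\langle \sigma_{1}, \dots, \sigma_{n-2}\rangle$, a case analysis on whether the top index lies in $I \cap J$ would be needed, together with the rewriting relations of \cref{eq:relsSigmas} to shift any $\sigma_{n-1}$-contributions to one side of $wv^{-1}$ so that it can be recognised as living inside the subgroup of injectivity; symmetry in $I,J$ handles the remaining configurations.
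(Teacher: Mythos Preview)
The paper does not supply its own proof of this lemma; it is quoted verbatim from \cite{DAzevedoJonesSchulte_2011_ConstructionsChiralPolytopes} and used as a black box, so there is nothing in the paper to compare your argument against.

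As for the argument itself: note that the statement, read literally, asks for $\pi$ to be one-to-one on $\langle \sigma_{1},\dots,\sigma_{n-1}\rangle$, which is all of $\Gamma$. Under that reading your first two paragraphs already constitute a complete proof: $wv^{-1}\in\ker\pi=\{\id\}$ immediately, with no containment to check. You were right to be suspicious, though: the hypothesis as printed is almost certainly a transcription slip for injectivity on the \emph{facet} subgroup $\langle \sigma_{1},\dots,\sigma_{n-2}\rangle$, which is how the lemma is actually applied just below (where the facets of $\cP$ cover the facets of $\cM$, giving injectivity only on the facet group of the mix).

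For that intended statement, your proposed case analysis is the right shape but the hard case is not handled. When $n-1\notin I$ (or $n-1\notin J$) one has $\Gamma_{I}\subseteq\langle\sigma_{1},\dots,\sigma_{n-2}\rangle$ and your argument goes through verbatim. The remaining case $n-1\in I\cap J$ is where the content lies, and ``shifting $\sigma_{n-1}$-contributions to one side of $wv^{-1}$ via \cref{eq:relsSigmas}'' is not a mechanism that makes sense here: $wv^{-1}$ is an element of $\Gamma$, not a word, and the relations do not let you push all occurrences of $\sigma_{n-1}$ out of an arbitrary group element. If you want to complete this direction, consult the original argument in \cite{DAzevedoJonesSchulte_2011_ConstructionsChiralPolytopes}; the point is a short induction on rank together with the observation that $\Gamma_{\{0,\dots,n-2\}}=\langle\sigma_{1},\dots,\sigma_{n-2}\rangle$ already has the intersection property (it embeds in $\Lambda_{\{0,\dots,n-2\}}$), rather than any syllable-shuffling trick.
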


If $\cP$ is rotary polytope and $\cM$ is a rotary maniplex such the facets of $\cP$ cover the facets of $\cM$, then the mapping $\pi: \autp(\cP) \mix \autp(\cM) \to \autp(\cP)$ defined by $(\sigma_{i}, \sigma'_{i}) \mapsto \sigma_{i}$ satisfy the hypotesis of \cref{lem:preQuotientCriterion}. 
In other words, we have the following result, which is essentially \cite[Lemma 3.3]{DAzevedoJonesSchulte_2011_ConstructionsChiralPolytopes}

\begin{lem}\label{lem:quotientCriterion}
 If $\cP$ is a rotary polytope and $\cM$ is a rotary maniplex so that the facets of $\cP$ cover the facets of $\cM$, then the group $\autp(\cP) \mix \autp(\cM)$ has the intersection property in \cref{eq:intPropertyChiral}.
\end{lem}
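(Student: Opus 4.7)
The plan is to apply \cref{lem:preQuotientCriterion} with $\Gamma = \autp(\cP) \mix \autp(\cM)$, generated by the pairs $(\sigma_{i}, \sigma'_{i})$ for $i \in \{1, \dots, n-1\}$, and with $\Lambda = \autp(\cP)$. Since $\cP$ is a rotary polytope, $\autp(\cP)$ satisfies the relations in \cref{eq:relsSigmas} as well as the intersection property \cref{eq:intPropertyChiral}, so $\Lambda$ meets the hypotheses required of it by \cref{lem:preQuotientCriterion}. The homomorphism fed into the lemma is the restriction to $\Gamma$ of the canonical projection $\autp(\cP) \times \autp(\cM) \to \autp(\cP)$; this sends $(\sigma_{i}, \sigma'_{i}) \mapsto \sigma_{i}$ on generators and is thus automatically a well-defined group homomorphism.

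First I would verify that $\Gamma$ itself satisfies the relations in \cref{eq:relsSigmas}. A relation holds in a direct product precisely when it holds in each coordinate, so this is routine: both $\autp(\cP)$ and $\autp(\cM)$ satisfy the rotation relations (being rotary), hence so does their mix. The crux is to show that $\pi$ is one-to-one on the facet subgroup $\langle (\sigma_{i}, \sigma'_{i}) : 1 \leq i \leq n-2 \rangle$ of $\Gamma$. Let $\cK$ and $\cK'$ denote the (rooted) facets of $\cP$ and $\cM$ respectively; by \cref{part:facets} of \cref{thm:chiralGroups} together with its analogue for maniplexes, $\autp(\cK) = \langle \sigma_{1}, \dots, \sigma_{n-2} \rangle$ and $\autp(\cK') = \langle \sigma'_{1}, \dots, \sigma'_{n-2} \rangle$. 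It follows directly from the definition in \cref{eq:mix} that the facet subgroup of $\Gamma$ equals $\autp(\cK) \mix \autp(\cK')$. Since by hypothesis the facets of $\cP$ cover the facets of $\cM$, \cref{lem:covers} gives $\autp(\cK) \mix \autp(\cK') \cong \autp(\cK)$, and under this isomorphism the restriction of $\pi$ to the facet subgroup is exactly the canonical isomorphism onto $\autp(\cK) \leq \autp(\cP)$. In particular, $\pi$ is injective on the facet subgroup.

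All hypotheses of \cref{lem:preQuotientCriterion} are now in place, so we conclude that $\Gamma = \autp(\cP) \mix \autp(\cM)$ satisfies the intersection property \cref{eq:intPropertyChiral}. The main obstacle is the verification of injectivity on the facet subgroup; once one identifies this subgroup with the mix of the facet rotation groups, \cref{lem:covers} collapses the mix on the facet level and provides the required injectivity. Everything else is a formal manipulation with direct products and the standard chiral quotient criterion.
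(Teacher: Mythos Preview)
Your proof is correct and follows exactly the approach sketched in the paper: apply \cref{lem:preQuotientCriterion} with $\Gamma = \autp(\cP)\mix\autp(\cM)$, $\Lambda = \autp(\cP)$, and $\pi$ the first-coordinate projection, using the covering hypothesis (via \cref{lem:covers}) to guarantee injectivity of $\pi$ on the facet subgroup. The paper compresses all of this into a single sentence immediately preceding the lemma; you have simply unpacked that sentence and supplied the details, so there is no substantive difference between the two arguments.
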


\subsection{GPR-graphs}
A \emph{general permutation representation graph} (or just \emph{GPR-graph}) is a directed graph encoding all the information of the rotation group of a rotary polytope.
They were introduced in \cite{PellicerWeiss_2010_GeneralizedCprGraphs} as a generalisation of CPR-graphs (C-group premutation representation graphs).
In this section we review the definition and basic properties of such graphs.

Let $\cP$ be a rotary $n$-polytope with rotation group $\autp(\cP)=\left\langle \sigma_{1}, \dots, \sigma_{n-1} \right\rangle $. 
Let $m \in \bN$ and let $\phi: \autp(\cK) \to S_{m}$ be an embedding of the group $\autp(\cK)$ into a symmetric group $S_{m}$. 
The \emph{GPR-graph} associated to $\phi$ is the directed labelled multigraph (parallel edges are allowed) whose vertices are $\{1, \dots, m\}$ and for which there is an arrow from $s$ to $t$, with label $k$, whenever $\phi(\sigma_{k})$ maps $s$ to $t$.

We  call \emph{$k$-arrows} the arrows labelled with $k$. 
Usually the embedding is given by a known action of $\aut(\cK)$ and it can be omitted. We also omit loops, so a point of $\{1,\dots, m\}$ is understood to be fixed by $\phi(\sigma_{k})$ if and only if it has no $k$-arrows starting on it. 
A connected component of arrows with labels in $I\subset\{1, \dots, n-1\}$ is called an \emph{$I$-component}, and if $I=\{k\}$ for some $k$, then it is called a \emph{$k$-component}.
Observe that an $I$-component consists of one orbit of points in $\left\{ 1, \dots, m \right\}$ under $\langle \phi(\sigma_{i}): i \in I \rangle$.

Note that a GPR-graph of a rotary polytope $\cP$ determines the automorphism group of $\cP$ (and hence, it determines $\cP$). 
Observe also that the group $\aut(\cP)$ acts on the vertices of any GPR-graph of $\cP$ via the embedding $\phi$. 
Moreover, every path from a vertex $u$ to a vertex $v$ of a GPR-graph determines a word on $\{\sigma_{1}, \dots \sigma_{n-1}\} \cup \{\sigma^{-1}_{1}, \dots, \sigma^{-1}_{n-1}\}$ and hence, an element $\alpha$ of $\autp(\cP)$. 
The element $\alpha$ satisfies that $\phi(\alpha)$ maps $u$ to $v$. 
In general different paths determine different elements of $\autp(\cP)$.

If $(\cP,\Phi_{0})$ is a rooted rotary polytope, we may consider the isomophism described in \cref{part:isomorphismChir} of \cref{prop:autpAsMonPChir} as an embedding of $\autp(\cP)$ into the permutation group $\conw(\cP)$ on the set of white flags given by the (left) action of $\conp(\cP)$. 
The GPR-graph associated to this embedding is called the \emph{Cayley GPR-graph} of $\cP$ and it is denoted by $\cay(\cP)$.

Since the action of $\conp(\cP)$ on the set of white flags is free (see \cref{part:relationsChir} of \cref{prop:autpAsMonPChir}), the following proposition is immediate.

\begin{prop}(Proposition 5 in \cite{CunninghamPellicer_2014_ChiralExtensionsChiral})\label{prop:faithfulCay}
  Let $\cay(\cP)$ be the Cayley GPR-graph of a rotary polytope $\cP$ and let $s$ and $t$ be vertices of $\cay(\cP)$. There exists a unique element $w \in \conp(\cP)$ mapping $s$ to $t$. In particular, every element of $\conp(\cP)$ determined by a path from $s$ to $t$ is equal to $w$.
\end{prop}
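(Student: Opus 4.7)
The plan is to reduce the statement to a regular (i.e.\ free and transitive) action, exploiting \cref{prop:autpAsMonPChir}. Under the identification $s_i \leftrightarrow \bar{s}_i$ set up just before the proposition, an element of $\conp(\cP)$ acts on $\fw(\cP)$ as a uniquely determined element of $\conw(\cP)$; so it suffices to prove that $\conw(\cP)$ acts regularly on the vertex set $\fw(\cP)$ of $\cay(\cP)$.

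For existence, I would invoke \cref{part:isomorphismChir} of \cref{prop:autpAsMonPChir}, which gives an isomorphism $f \colon \conw(\cP) \to \autp(\cP)$ sending $\bar{s}_i$ to $\sigma_i$, together with \cref{part:jumpsChir}, which says that the left action of a word $\bar{s}_{i_1}\cdots \bar{s}_{i_k}$ on the base white flag $\baseFlag$ coincides with the right action of $\sigma_{i_1}\cdots\sigma_{i_k}$. Because $\cP$ is rotary, $\autp(\cP)$ acts transitively on $\fw(\cP)$; transporting this through $f$ shows that $\conw(\cP)$ is transitive on $\fw(\cP)$, so some $w$ maps $s$ to $t$.

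For uniqueness, the strong flag connectivity of $\cP$ forces the action of $\aut(\cP)$ on $\fl(\cP)$ to be free, hence so is the restricted action of $\autp(\cP)$ on $\fw(\cP)$. Using $f$ again (and \cref{part:jumpsChir} to match the two actions), I conclude that $\conw(\cP)$ acts freely on $\fw(\cP)$, so the element $w$ above is unique. Combined with existence this is the regularity of the action, which is the content of the first sentence of the proposition.

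Finally, for the ``in particular'' clause, any directed path from $s$ to $t$ in $\cay(\cP)$ spells a word in the generators $\bar{s}_1,\dots,\bar{s}_{n-1}$ and their inverses; by construction of the GPR-graph, the corresponding element of $\conw(\cP)$ sends $s$ to $t$, so by the uniqueness just established it must equal $w$. There is no real obstacle here beyond bookkeeping between $\conp(\cP)$ and $\conw(\cP)$ and between the left action of connection elements and the right action of automorphisms; the whole content is packaged in \cref{prop:autpAsMonPChir}.
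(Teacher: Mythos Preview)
Your proof is correct and follows the same approach as the paper. The paper's own justification is the single sentence preceding the proposition --- ``Since the action of $\conp(\cP)$ on the set of white flags is free (see \cref{part:relationsChir} of \cref{prop:autpAsMonPChir}), the following proposition is immediate'' --- and you have simply unpacked this, carefully handling the $\conp(\cP)$ versus $\conw(\cP)$ identification and supplying the transitivity half that the paper leaves implicit.
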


Given an embedding $\phi$ of $\autp(\cP)$ into $S_{m}$, there exists a natural embedding $\phi^{d}$ of $\autp(\cK)$ into the direct product  $S_{m} \times \dots \times S_{m}$ of $d$ copies of $S_{m}$. This embedding is given by $\phi^{d}:\sigma_{i}\mapsto(\phi(\sigma_{i}), \dots, \phi(\sigma_{i}))$.
By considering this embedding, the following proposition is obvious.

\begin{prop}\label{prop:CopiesOfGPR}
	Let $G_{1}, \dots, G_{d}$ be isomorphic copies of a GPR-graph of a rotary polytope $\cP$. Then the disjoint union of $G_{1}, \dots, G_{d}$ is a GPR-graph of $\cP$.
\end{prop}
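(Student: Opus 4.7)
The statement is essentially a direct consequence of the construction of $\phi^{d}$ sketched in the paragraph immediately preceding it, so the plan is to verify the two things that need checking: that $\phi^{d}$ is indeed a faithful (embedding) representation of $\autp(\cP)$, and that the directed labelled multigraph it produces is exactly the disjoint union of $d$ copies of the graph of $\phi$.

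First I would fix an embedding $\phi : \autp(\cP) \to S_{m}$ whose associated GPR-graph is (isomorphic to) the common graph $G$ of which $G_{1}, \dots, G_{d}$ are copies. I then consider the natural diagonal map
\[
\phi^{d} : \autp(\cP) \to S_{m} \times \cdots \times S_{m}, \qquad \sigma_{i} \mapsto \bigl(\phi(\sigma_{i}), \dots, \phi(\sigma_{i})\bigr),
\]
and interpret the codomain as a subgroup of $S_{dm}$ by letting the $j$-th factor act on the $j$-th copy of $\{1, \dots, m\}$. Since $\phi^{d}$ is the composition of $\phi$ with the diagonal group homomorphism $S_{m} \to S_{m}^{d}$, and the diagonal is injective, $\phi^{d}$ inherits injectivity from $\phi$; hence $\phi^{d}$ is an embedding, so it has an associated GPR-graph in the sense of the paper's definition.

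Next I would describe the GPR-graph of $\phi^{d}$ explicitly. Its vertex set is the disjoint union $\bigsqcup_{j=1}^{d} \{1, \dots, m\}_{j}$, and by definition there is an $i$-arrow from $(s,j)$ to $(t,j')$ precisely when $\phi^{d}(\sigma_{i})$ sends $(s,j)$ to $(t,j')$. But the $j$-th coordinate of $\phi^{d}(\sigma_{i})$ acts only on the $j$-th copy and coincides with $\phi(\sigma_{i})$, so this happens if and only if $j = j'$ and $\phi(\sigma_{i})$ sends $s$ to $t$. Therefore the $i$-arrows of the graph of $\phi^{d}$ lying in the $j$-th copy are exactly those of the graph of $\phi$, and there are no arrows between different copies. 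This is precisely the disjoint union $G_{1} \sqcup \cdots \sqcup G_{d}$.

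There is no genuine obstacle here; the only point one might worry about is conflating ``isomorphic GPR-graph'' with ``same embedding,'' but this is harmless, since relabelling the points of each copy $G_{j}$ amounts to conjugating the $j$-th factor of $\phi^{d}$ by a permutation of $\{1,\dots,m\}$, which yields an embedding equivalent to $\phi^{d}$ and produces the same labelled multigraph up to isomorphism. Thus the disjoint union $G_{1} \sqcup \cdots \sqcup G_{d}$ is the GPR-graph associated to $\phi^{d}$, and the proposition follows.
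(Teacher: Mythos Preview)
Your argument is correct and follows exactly the approach the paper indicates: the paper defines the diagonal embedding $\phi^{d}$ in the paragraph preceding the proposition and then declares the result ``obvious,'' while you have simply spelled out the verification that $\phi^{d}$ is injective and that its associated graph is the disjoint union of the copies.
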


We finish this review of GPR-graphs with the following result, which will be  useful in later sections. 

\begin{thm}[{Theorem 8 in \cite{CunninghamPellicer_2014_ChiralExtensionsChiral}}]\label{thm:GPRgraphs}
	Let $G$ be a directed graph with arrows labelled $1, \dots, n$. Let $G_{1}, \dots G_{d}$ be the $\{1,2, \dots, n-1\}$-components of $G$. Assume also that 
	\begin{enumerate}
		\item\label[part]{part:GPR_facets} $G_{1}, \dots G_{d}$ are isomorphic (as labeled directed graphs) to the Cayley GPR-graph of a fixed chiral $n$-polytope $\cK$ with regular facets.
		\item \label[part]{part:GPR_rels} For $k \in \{1, \dots, n-1\}$, the action of $(\sigma_{k}\cdots\sigma_{n})^{2}$ on the vertex set of $G$ is trivial, where $\sigma_{i}$ is the permutation determined by all arrows of label $i$.
		\item \label[part]{part:GPR_intersection1} $\langle \sigma_{1}, \dots, \sigma_{n-1} \rangle \cap \langle \sigma_{n} \rangle = \{\epsilon\}$.
		\item \label[part]{part:GPR_intersection2} For every $k \in \{2, \dots, n-1\}$ there exists a $\{1, \dots, n-1\}$-component $G_{i_{k}}$ and a $\{k, \dots, n\}$-component $D_{k}$ such that $G_{i_{k}} \cap D_{k}$ is a nonempty $\{k, \dots, n-1\}$-component.
	\end{enumerate}
	Then $G$ is a GPR-graph of a chiral $(n+1)$-polytope $\cP$ whose facets are isomorphic to $\cK$.
\end{thm}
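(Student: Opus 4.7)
The plan is to apply \cref{thm:chiralGroups} to the group $\Gamma = \langle \sigma_{1}, \ldots, \sigma_{n} \rangle$ of permutations of $V(G)$ induced by the labelled arrows of $G$. First I would verify the relations in \cref{eq:relsSigmas} for $\Gamma$. The relations that only involve $\sigma_{1}, \ldots, \sigma_{n-1}$ follow from \cref{part:GPR_facets}: on each $\{1, \ldots, n-1\}$-component $G_{\ell}$, the restricted action is conjugate to that of the rotation group of $\cK$ on its Cayley GPR-graph, so the facet-type relations hold on each piece and hence on all of $V(G)$. The new mixed relations $(\sigma_{k} \cdots \sigma_{n})^{2} = \id$ for $k \in \{1, \ldots, n-1\}$ are exactly \cref{part:GPR_rels}.

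Next I would verify the intersection property in \cref{eq:intPropertyChiral} for $\Gamma$. The part of this property involving only subsets of $\{0, \ldots, n-1\}$ is transported from $\autp(\cK)$ to $\langle \sigma_{1}, \ldots, \sigma_{n-1} \rangle$ via the faithful action on any facet component, using \cref{prop:faithfulCay}. The new cases are those where the index $n$ appears, and a standard reduction leaves two equalities to check: first, $\langle \sigma_{1}, \ldots, \sigma_{n-1} \rangle \cap \langle \sigma_{n} \rangle = \{\id\}$, which is \cref{part:GPR_intersection1}; and second, for each $k \in \{2, \ldots, n-1\}$, $\langle \sigma_{1}, \ldots, \sigma_{n-1} \rangle \cap \langle \sigma_{k}, \ldots, \sigma_{n} \rangle = \langle \sigma_{k}, \ldots, \sigma_{n-1} \rangle$. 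For the latter, choose a vertex $v$ in the set $G_{i_{k}} \cap D_{k}$ provided by \cref{part:GPR_intersection2}; any element $g$ in the intersection sends $v$ to a vertex lying simultaneously in $G_{i_{k}}$ and $D_{k}$, hence in the $\{k, \ldots, n-1\}$-component $G_{i_{k}} \cap D_{k}$. By \cref{prop:faithfulCay} applied to the facet component $G_{i_{k}}$, the element $g$ must coincide with the element represented by a path from $v$ to $v \cdot g$ lying entirely in $G_{i_{k}} \cap D_{k}$, and such a path uses only arrows labelled $k, \ldots, n-1$, so $g \in \langle \sigma_{k}, \ldots, \sigma_{n-1} \rangle$.

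Once these hypotheses are in place, \cref{part:existence,part:facets} of \cref{thm:chiralGroups} produce a rotary $(n+1)$-polytope $\cP$ with $\autp(\cP) = \Gamma$ and facets isomorphic to $\cP(\langle \sigma_{1}, \ldots, \sigma_{n-1} \rangle)$; the identification of this subgroup with $\autp(\cK)$ above shows that the facets of $\cP$ are isomorphic to $\cK$. Because the facets of an orientably regular polytope are themselves orientably regular and $\cK$ is chiral, $\cP$ cannot be orientably regular and is therefore chiral. Since $\Gamma$ is defined to act on $V(G)$ via the arrows of $G$, the graph $G$ is precisely the GPR-graph of $\cP$ associated to this action. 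The main obstacle in the plan is the verification of the intersection property; specifically, translating the combinatorial condition \cref{part:GPR_intersection2} on how components of different index sets meet into the algebraic statement about subgroups of $\Gamma$ requires the careful use of \cref{prop:faithfulCay} on the Cayley structure of the facet components. The remainder of the proof is a routine application of \cref{thm:chiralGroups} once its hypotheses are verified.
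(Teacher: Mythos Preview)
The paper does not contain a proof of this statement: \cref{thm:GPRgraphs} is quoted verbatim as Theorem~8 of \cite{CunninghamPellicer_2014_ChiralExtensionsChiral} and is used as a black box in \cref{sec:dually}. There is therefore nothing in this paper to compare your argument against.

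That said, your sketch is essentially the argument one finds in the original reference. The identifications you make are correct: the copies $G_{\ell}$ being Cayley GPR-graphs forces $\langle \sigma_{1},\dots,\sigma_{n-1}\rangle \cong \autp(\cK)$ (an element acting trivially on one $G_{\ell}$ acts trivially on all of them, since they are isomorphic as labelled graphs); hypothesis \cref{part:GPR_rels} supplies the new relations; and the chirality of $\cP$ is forced because an orientably regular polytope has orientably regular facets, while $\cK$ is chiral. Your use of \cref{prop:faithfulCay} to turn \cref{part:GPR_intersection2} into the subgroup identity $\langle \sigma_{1},\dots,\sigma_{n-1}\rangle \cap \langle \sigma_{k},\dots,\sigma_{n}\rangle = \langle \sigma_{k},\dots,\sigma_{n-1}\rangle$ is exactly the intended mechanism.

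The one place where your write-up is thinner than it should be is the phrase ``a standard reduction leaves two equalities to check''. The intersection property in \cref{eq:intPropertyChiral} is stated for \emph{arbitrary} subsets $I,J \subset \{0,\dots,n\}$, not just intervals, and reducing it to the two families you list (namely \cref{part:GPR_intersection1} and the interval intersections coming from \cref{part:GPR_intersection2}) is itself a lemma that requires the intersection property already known for $\autp(\cK)$ together with an inductive argument; in \cite{CunninghamPellicer_2014_ChiralExtensionsChiral} this is handled explicitly. If you were writing this up independently you would need to either cite that reduction or supply it.
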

\section{Extensions of dually bipartite chiral polytopes} \label{sec:dually}

In this section we give a construction of chiral extensions of dually bipartite chiral polytopes with regular facets. 
To be precise, given a finite dually-bipartite chiral polytope $\cK$ and $s \in \bN$, we construct a graph $\gpr$ such that $\gpr$ is a GPR-graph of a finite chiral extension  $\cP$ of $\cK$. 
To be precise, we prove the following theorem.

\begin{thm}\label{thm:ManyExtensionsDBP}
 Let$n \geq 3$ and let $\cK$ be a finite dually-bipartite chiral $n$-polytope with regular facets.
 Let $s \in \bN$. 
 Then there exists a finite chiral extension $\cP$ of $\cK$ such that if $\cP$ is of type $\left\{ p_{1}, \dots, p_{n-1}, p_{n} \right\} $, then $2s | p_{n}$.
\end{thm}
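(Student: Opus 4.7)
My plan is to apply \cref{thm:GPRgraphs} to a carefully built directed edge-coloured graph $\gpr$. The vertex set will be $\fw(\cK)\times \bZ_{2s}$, and on each slice $\fw(\cK)\times\{i\}$ I place an isomorphic copy of the Cayley GPR-graph $\cay(\cK)$, with arrows coloured $1,\dots,n-1$ exactly as in $\cay(\cK)$. By \cref{prop:CopiesOfGPR} this disjoint union is a GPR-graph for $2s$ disjoint copies of $\cK$, and it automatically satisfies condition \cref{part:GPR_facets} of \cref{thm:GPRgraphs}. All that remains is to define $n$-arrows on $\gpr$ and to verify conditions \cref{part:GPR_rels,part:GPR_intersection1,part:GPR_intersection2}.

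The definition of the $n$-arrows is where the dually-bipartite hypothesis enters. From the bipartition of the facets of $\cK$ I would extract a labelling $c:\fw(\cK)\to\{+1,-1\}$ that is invariant under $\sigma_{1},\dots,\sigma_{n-2}$ and that records which side of the bipartition the facet of a given white flag belongs to. I would then declare that $\sigma_{n}$ act on a vertex $(\Phi,i)$ by shifting the second coordinate by $c(\Phi)$ and modifying the first coordinate through a partner map $\Phi\mapsto \Phi^{\ast}$ which preserves $c$. The partner map is pinned down by insisting that $\sigma_{n-1}\sigma_{n}$ be an involution on the vertex set of $\gpr$, as is required by the first nontrivial relation in \cref{eq:relsSigmas} for the target extension. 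This is a chiral analogue of the duplication trick used by Pellicer in \cite{Pellicer_2010_ExtensionsDuallyBipartite} for regular dually-bipartite polytopes; the subtlety that is absent from the regular setting is that one must work with $\fw(\cK)$ alone, since a chiral polytope has no global reflection relating the two flag orbits, so the labelling $c$ must be constructed directly from the even connection group $\conp(\cK)$ acting on $\fw(\cK)$.

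After the graph $\gpr$ is in place, the verification of the remaining conditions is largely routine. The relations $(\sigma_k\cdots\sigma_n)^{2}=\id$ for $k\leq n-3$ follow because $\sigma_{1},\dots,\sigma_{n-3}$ stabilise each slice and commute with the $\bZ_{2s}$-shift component of $\sigma_{n}$, while the cases $k=n-1$ and $k=n-2$ are the built-in content of the definition of $\sigma_{n}$, which gives \cref{part:GPR_rels}. Condition \cref{part:GPR_intersection1} is immediate: $\langle \sigma_{1},\dots,\sigma_{n-1}\rangle$ preserves each slice, and no nontrivial power of $\sigma_{n}$ does. For \cref{part:GPR_intersection2}, I would locate, for each $k\in\{2,\dots,n-1\}$, a slice whose intersection with a suitable $\{k,\dots,n\}$-component coincides with the $\{k,\dots,n-1\}$-component of that slice containing the chosen base vertex; the relevant check reduces to a local fact about the vertex-figure sections of $\cK$. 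Finally, since $\sigma_{n}$ shifts the $\bZ_{2s}$-coordinate by $\pm 1$ while preserving $c$, a generic $\sigma_{n}$-orbit cycles through all $2s$ slices, forcing the order of $\sigma_{n}$ to be a multiple of $2s$; hence $2s\mid p_{n}$ in the Schläfli symbol of the resulting chiral extension $\cP$. The principal obstacle I anticipate is precisely the joint construction of the labelling $c$ and the partner map $\Phi\mapsto\Phi^{\ast}$ with mutually compatible properties on $\fw(\cK)$, since any failure of compatibility would break either the involutory relation $(\sigma_{n-1}\sigma_{n})^{2}=\id$ or the intersection property \cref{eq:intPropertyChiral}.
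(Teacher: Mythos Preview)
Your overall strategy coincides with the paper's: take $2s$ copies of $\cay(\cK)$ indexed by $\bZ_{2s}$, use the facet bipartition to govern how the new generator moves between copies, and invoke \cref{thm:GPRgraphs}. The paper carries this out in \cref{prop:chiralExtensionDBP}, from which \cref{thm:ManyExtensionsDBP} follows immediately.

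There is, however, a real gap in your construction of the partner map $\Phi\mapsto\Phi^{\ast}$. Requiring $\sigma_{n-1}\sigma_{n}$ to be an involution does not pin it down, and your claim that the relations $(\sigma_{k}\cdots\sigma_{n})^{2}=\id$ for $k\leq n-3$ follow from ``commutation with the shift'' is incorrect. By \cref{lem:relsOfExtensionsSigmasAndTau} (applied with one extra generator), what must hold for $t=s_{n-1}s_{n}$ on $\fw(\cK)$ is
\[
t\, s_{n-2}\, t = s_{n-2}^{-1},\qquad t\, s_{n-3}\, t = s_{n-3}s_{n-2}^{2},\qquad t\, s_{i}\, t = s_{i}\ \ (i\leq n-4),
\]
and the first two are emphatically \emph{not} commutations. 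These are precisely the defining relations of the involutory automorphism $\rho$ of $\langle s_{1},\dots,s_{n-2}\rangle$ furnished by the regularity of the facets of $\cK$ (the dual of \cref{part:chirality} of \cref{thm:chiralGroups}). The paper uses $\rho$ explicitly: for each $\{1,\dots,n-2\}$-component $F$ it fixes a base vertex $(\Phi_{F},\ell)$, writes every vertex of $F$ uniquely as $(w\Phi_{F},\ell)$ with $w\in\langle s_{1},\dots,s_{n-2}\rangle$, and matches it to $(\rho(w)\Phi_{F},\ell')$. Your sketch never invokes the regular-facets hypothesis, yet without this $\rho$-twist the conjugation relations above cannot be arranged, and \cref{part:GPR_rels} fails.

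A secondary point: for \cref{part:GPR_intersection2} the base vertices $(\Phi_{F},\ell)$ cannot be chosen arbitrarily within their $\{1,\dots,n-2\}$-components. The paper deliberately places each one inside the relevant $\{k,\dots,n-1\}$-component $E_{k}^{\ell}$ (via the nested families $\mathcal{C}_{k}^{\ell}$); this placement, combined with the fact that $\rho$ preserves each subgroup $\langle s_{k},\dots,s_{n-2}\rangle$, is exactly what forces $D_{k}\cap G_{\ell}=E_{k}^{\ell}$. Describing this as ``a local fact about vertex-figure sections'' underestimates the care needed.
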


In particular, since $s$ is arbitrary, we have the following result as a consequence.

\begin{coro}\label{coro:ManyExtensionsDBP}
 Let $n \geq 3$. Every finite dually-bipartite chiral $n$-polytope with regular facets admits infinitely many non-isomorphic chiral extensions.
\end{coro}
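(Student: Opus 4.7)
The plan is to apply \cref{thm:GPRgraphs} by constructing a labelled directed graph $\gpr$ whose $\{1,\dots,n-1\}$-components are $2s$ disjoint copies of the Cayley GPR-graph $\cay(\cK)$, together with carefully placed $n$-arrows between the copies, designed so that the permutation $\sigma_{n}$ determined by the $n$-arrows has order divisible by $2s$. Once $\gpr$ satisfies the hypotheses of \cref{thm:GPRgraphs}, it is the GPR-graph of a chiral extension $\cP$ of $\cK$, and the order of $\sigma_{n}$ in $\autp(\cP)$, which equals $p_{n}$, is divisible by $2s$.

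The central ingredient is the dually-bipartite hypothesis on $\cK$, which furnishes a $2$-colouring of the vertices of $\cay(\cK)$ such that the generator $\sigma_{n-1}$ induces a colour-reversing permutation. I would label the $2s$ copies of $\cay(\cK)$ by $\bZ/2s$ and transfer this colouring to each copy. Then I would define the $n$-arrows as follows: for a vertex $v$ in the copy indexed by $k$, the $n$-arrow from $v$ points to the corresponding vertex in the copy indexed by $k+1$ or $k-1$, according to the colour of $v$. This forces $\sigma_{n}$ to permute the $2s$ copies cyclically, giving $2s \mid |\sigma_{n}|$; and the colour-reversing property of $\sigma_{n-1}$ ensures that $\sigma_{n-1}\sigma_{n}$ sends a vertex first to one adjacent copy and then back to the original, making $(\sigma_{n-1}\sigma_{n})^{2}=\id$.

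Next I would verify the four conditions of \cref{thm:GPRgraphs}. \Cref{part:GPR_facets} holds by construction. For \cref{part:GPR_rels}, the relation $(\sigma_{k}\cdots\sigma_{n})^{2}=\id$ is straightforward when $k \leq n-3$ because $\sigma_{n}$ commutes with $\sigma_{k}$ and the relation reduces to the corresponding one inside $\cay(\cK)$; the case $k=n-1$ is handled by the colouring argument above; and the case $k=n-2$ requires a brief verification that the copy shift is compatible with $\sigma_{n-2}$. \Cref{part:GPR_intersection1} is immediate: any element of $\langle\sigma_{n}\rangle$ that also lies in $\langle\sigma_{1},\dots,\sigma_{n-1}\rangle$ must act trivially on the indexing $\bZ/2s$ of the copies, and hence be trivial.

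The main obstacle I anticipate is \cref{part:GPR_intersection2}, which requires locating, for each $k \in \{2,\dots,n-1\}$, a $\{k,\dots,n-1\}$-component of $\gpr$ that coincides with the intersection of a copy of $\cay(\cK)$ and a suitable $\{k,\dots,n\}$-component. This is a delicate bookkeeping task: one must track how $\sigma_{n}$, via the colouring and the cyclic copy shift, interacts with the section structure of $\cK$, and it is here that the dually-bipartite hypothesis is used most essentially. A secondary concern is to confirm that the extension $\cP$ is truly chiral and not orientably regular, which should follow from the chirality of $\cK$ via \cref{part:chirality} of \cref{thm:chiralGroups}.
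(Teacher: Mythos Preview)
Your overall framework matches the paper's: take $2s$ copies of $\cay(\cK)$ indexed by $\bZ_{2s}$, use the facet $2$-colouring coming from dual-bipartiteness to steer $n$-arrows between copies, and invoke \cref{thm:GPRgraphs}. But your specific definition of $\sigma_{n}$ is too naive and the relation $(\sigma_{n-1}\sigma_{n})^{2}=\id$ fails. With $\sigma_{n}:(v,k)\mapsto (v,\,k+c(v))$ and $\sigma_{n-1}:(v,k)\mapsto (s_{n-1}v,k)$, and using that $s_{n-1}$ reverses colour, one computes
\[
(v,k)(\sigma_{n-1}\sigma_{n})^{2}=(s_{n-1}^{2}v,\,k).
\]
So the copy index does return to $k$, as you argue, but the vertex inside the copy moves by $s_{n-1}^{2}$, which is nontrivial since $p_{n-1}>2$. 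The same computation shows that $\tau=\sigma_{n-1}\sigma_{n}$ is not an involution, so by \cref{lem:relsOfExtensionsSigmasAndTau} none of the required relations in \cref{part:GPR_rels} can hold.

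A related red flag is that your construction never uses the hypothesis that $\cK$ has regular facets. In the paper this hypothesis is essential: it supplies the involutory automorphism $\rho$ of $\langle s_{1},\dots,s_{n-2}\rangle$ (the dual of the one in \cref{part:chirality} of \cref{thm:chiralGroups}), and the matching that defines $\tau$ is twisted by $\rho$. Concretely, the paper does \emph{not} define $\sigma_{n}$ directly; it first builds a perfect matching $M$ on the vertex set, chooses one anchor vertex $(\Phi_{F},\ell)$ in each $\{1,\dots,n-2\}$-component, and then matches $(w\Phi_{F},\ell)$ to $(\bar{w}\Phi_{F},\ell')$, where $\bar{w}=\rho(w)$ and the shift $\ell'=\ell+(-1)^{\ell}\bar{c}(\Phi_{F})$ depends on \emph{both} the parity of $\ell$ and the colour. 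The involution $t$ swapping matched pairs then satisfies $t s_{n-2} t=s_{n-2}^{-1}$, $t s_{n-3} t=s_{n-3}s_{n-2}^{2}$, and $t s_{i} t=s_{i}$ for $i\leq n-4$ precisely because $\rho$ has this effect on generators; only then is $s_{n}=s_{n-1}^{-1}t$ defined. Your anticipated difficulty with \cref{part:GPR_intersection2} is real, and the paper handles it by choosing the anchor $\Phi_{F}$ for each component $F$ to lie in the smallest $E_{k}^{\ell}$ that $F$ meets, so that the $\rho$-twist keeps the matching inside $\langle s_{k},\dots,s_{n-1}\rangle$-orbits.
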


Recall that an $n$-polytope $\cK$ is dually bipartite if its facets admit a colouring with two colours in such a way that facets incident to a common $(n-2)$-face of $\cK$ have different colours. 

Let $\cK$ be a dually bipartite chiral $n$-polytope with regular facets. 
Let $\fw(\cK)$ denote the set of white flags of $\cK$ and for $i \in \{1, \dots, n-1\}$ let $s_{i}$ denote the permutation of $\fw(\cK)$ induced by the connection element $r_{i-1}r_{i}$.
Let $\conw(\cK)$ denote the permutation group of $\fw(\cK)$ generated by $\{s_{i} : 1 \leq i \leq n-1 \}$.
As discussed before, $\conw(\cK) \cong \autp(\cK)$.

Let $\cK_{n-1}$ denote the set of facets of $\cK$ and $c:\cK_{n-1} \to \{1,-1\} $ a colouring as the one described above. 
Observe that $c$ induces a colouring $\bar{c}: \fw(\cK) \to \{1,-1\}$ by assigning to each white flag the colour of its facet. 
Consider the following remark.

\begin{rem} \label{rem:inducedColouring}
  Let $\cK$ and $\bar{c}$ be as above. If $\Psi \in \langle s_{1}, \dots, s_{n-2} \rangle \Phi$, then $\bar{c}(\Psi) = \bar{c}(\Phi)$.
\end{rem}

We  use this property of the colouring $\bar{c}$ in our construction. Another important remark about dually bipartite polytopes is the following. 

\begin{rem}\label{rem:LastEntryDBP}
  If $\cK$ is a dually bipartite polytope of type $\{p_{1}, \dots, p_{n-1}\}$. If $p_{n-1} < \infty$, then $p_{n-1}$ is even. 
\end{rem}
 

The idea of our construction is very similar to that of \cite{CunninghamPellicer_2014_ChiralExtensionsChiral}.
Let $\cK$ be a dually bipartite chiral $n$-polytope with regular facets. 
Take $s \in \bN$ and let $G_{1}, \dots G_{2s}$ be $2s$ copies of $\cay(\cK)$, constructed as follows.
 For each $\ell \in \bZ_{2s}$, the vertices of the graph $G_{\ell}$ will be pairs labelled by $(\Phi, \ell)$ where $\Phi \in \fw(\cK)$. 
 Note that for $k \in \{1, \dots, n-1\}$, there is an arrow labelled with $k$ (\emph{$k$-arrow}) from $(\Phi,\ell)$ to $(\Psi, \ell)$ if and only if $\Psi = s_{k}\Phi$.  
 For $I \subset \{1, \dots, n-1\}$, the $I$-component of a vertex $v$ is the connected component containing $v$ after removing the arrows whose labels do not belong to $I$.
 Observe that \cref{rem:inducedColouring} implies that the colouring $\bar{c}$ is such that if $v$ and $u$ belong to the same $\{1, \dots, n-2\}$-component, then $\bar{c}(u) = \bar{c}(v)$.
We assume that if $\baseFlag$ is base flag of $\cK$, then $ \bar{c}(\baseFlag)=1$.
The strategy is to define a matching $M$ on the vertices of the disjoint union of $G_{1}, \dots G_{2s}$. 
Then consider the involutory permutation $t$ that results from swapping the endpoints of every edge of $M$. 
We take $s_{n} = s^{-1}_{n-1} t $ and then use \cref{thm:GPRgraphs} to prove that the resulting graph is a GPR-graph of a chiral extension of $\cK$. 
Then we will explore the properties of the resulting extension. 
%

We  define $M$ in several steps.
\begin{enumerate}
\item\label[step]{step:baseFlag} Add an edge between $(\baseFlag, \ell )$ and $(\baseFlag, \ell+(-1)^{\ell})$.
\item\label[step]{step:firstComponent} For every $j$, add an edge from $(s^{j}_{n-1}\baseFlag, \ell)$ to $\left(s_{n-1}^{-j}\baseFlag, \ell + (-1)^{\ell}\bar{c}(s_{n-1}^{j}\baseFlag )\right)$.
\end{enumerate}
Observe that the edge of \cref{step:firstComponent} is well defined by \cref{rem:LastEntryDBP}, since the order of $s_{n-1}$ must be even, which implies that $s_{n-1}^{j}\baseFlag$ and $s_{n-1}^{-j}\baseFlag$ have the same colour.
Now for every $\ell \in \bZ_{2s}$ and $k \in \{1, \dots, n-2\}$ let $E_{k}^{\ell}$ denote the $\{k, \dots, n-1\}$-component of $(\baseFlag, \ell)$. 
The vertices of $E_{k}^{\ell}$ are of the form $(\Psi, \ell)$ where $\Psi$ belongs to the orbit of $\baseFlag$ under $ \langle s_{k}, \dots, s_{n-1} \rangle$. 
This implies that $E_{n-2}^{\ell} \subset \cdots \subset E_{1}^{\ell} = G_{\ell}$. 
Define also the families \[\mathcal{C}_{k}^{\ell} = \left\{ \{1, \dots, n-2\}\text{-components}\ F\ \text{of}\ G_{\ell} :  F\cap E_{k}^{\ell}\neq \emptyset\ \text{but}\ F\cap E_{k+1}^{\ell}=\emptyset\right\}.\]
\begin{enumerate}[resume*]
  \item\label[step]{step:baseOfFacets} For $k \leq n-2$ and for every $F \in \mathcal{C}_{k}^{\ell}$ with $\ell$ odd, pick a vertex $(\Phi_{F}, \ell)$ in $E_{k}^{\ell}$ and match it to $(\Phi_{F}, \ell + (-1)^{\ell}\bar{c}(\Phi_{F}))$. 
\end{enumerate}
Since every $\{1, \dots n-2\}$-component of $G_{\ell}$ either has a vertex of the form $\left(s_{n-1}^{j}\baseFlag, \ell  \right)$ for some $j$ or belongs to $\mathcal{C}_{k}^{\ell}$ for some $k$, with \cref{step:baseFlag,step:firstComponent,step:baseOfFacets} we have picked exactly one vertex of each $\{1, \dots, n-2\}$-component of the graphs $G_{1}, \dots, G_{2s}$.

%
%
Let $(\Phi, \ell)$ be a vertex and let $F$ be its $\{1, \dots, n-2\}$-component. 
Let $(\Phi_{F}, \ell)$ be the unique vertex of $F$ incident to an edge of $M$. 
Observe that since the action of $\conw(\cK)$ is free on the set of white flags, there exists a unique element $w$ of $\langle s_{1}, \dots, s_{n-2} \rangle$ such that $w\Phi_{F} = \Phi$.
In other words, every vertex of $F$ is of the form $(w\Phi_{F}, \ell)$ for some $w \in \langle s_{1}, \dots, s_{n-2} \rangle$. 

Since $\cK$ has regular facets, \cref{prop:autpAsMonPChir} and \cref{part:chirality} of \cref{thm:chiralGroups} imply that there exist an involutory group automorphism  $\rho$ of $ \langle s_{1}, \dots, s_{n-2} \rangle$ mapping $s_{n-2}$ to $s_{n-2}^{-1}$, $s_{n-3}$ to $s_{n-3}s_{n-2}^{2}$ while fixing $s_{i}$ for $1 \leq i \leq n-4$.  
For $w \in \langle s_{1}, \dots, s_{n-2} \rangle$, let $\bar{w}$ denote $\rho(w)$. 
Note that the automorphism $\rho$ is the dual version of the automorphism in $\cref{part:chirality}$ of \cref{thm:chiralGroups}. 
\begin{enumerate}[resume*]
  \item\label[step]{step:extendingM} For every vertex of the form $(w\Phi_{F} , \ell)$, with $w \in \langle s_{1}, \dots, s_{n-2} \rangle$, add an edge from $(w\Phi_{F}, \ell)$ to $(\bar{w}\Phi_{F}, \ell+(-1)^{\ell}\bar{c}(\Phi_{F}))$.
\end{enumerate}

Observe that the edge of \cref{step:extendingM} is well-defined since all the flags of the form $w\Phi_{F}$ with $w \in \langle s_{1}, \dots, s_{n-2} \rangle$ have the colour of $\Phi_{F}$. 

We have defined the matching $M$. Let $t$ be the involutory permutation given by swapping the endpoints of each edge of $M$ and define $s_{n} = s_{n-1}^{-1} t$.
%
%
\begin{prop}\label{prop:chiralExtensionDBP}
  Let $\cK$ be a finite dually bipartite chiral polytope with regular facets and let $s \in \bN$. 
  The permutation group $\langle s_{1}, \dots, s_{n} \rangle$ defined by the graph $\gpr$ is the automorphism group of a chiral extension $\cP$ of $\cK$ with the property that $2s$ divides the last entry of the Schläfli symbol of $\cP$.
\end{prop}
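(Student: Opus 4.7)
The plan is to verify the four hypotheses of \cref{thm:GPRgraphs} for $\gpr$ with $s_{n} = s_{n-1}^{-1} t$, where $t$ denotes the involution swapping the endpoints of the matching $M$ produced by \cref{step:baseFlag,step:firstComponent,step:baseOfFacets,step:extendingM}. This will yield a chiral $(n+1)$-polytope $\cP$ whose facets are isomorphic to $\cK$, and the divisibility $2s \mid p_{n}$ will fall out of the orbit analysis used to verify one of the hypotheses.

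\cref{part:GPR_facets} is immediate since $G_{1},\dots,G_{2s}$ are by construction labelled copies of $\cay(\cK)$. For \cref{part:GPR_rels}, the key observation is that \cref{step:extendingM} encodes the automorphism $\rho$ of $\langle s_{1},\dots,s_{n-2}\rangle$ defined just above it: conjugation by $t$ realises $\rho$ on each $\{1,\dots,n-2\}$-component, so $t\,s_{k}\,t^{-1} = \rho(s_{k})$ for every $k \in \{1,\dots,n-2\}$. Combining this with $t^{2}=\epsilon$ and the cancellation of $s_{n-1}$ with $s_{n-1}^{-1}$, each relation $(s_{k}\cdots s_{n})^{2} = \epsilon$ reduces, after substitution of the explicit $\rho$-formulas, to $(s_{k}\cdots s_{n-2})^{2} = \epsilon$ inside $\conw(\cK)$, which holds because $\cK$ is a polytope; the cases $k = n-1, n-2$ are handled by direct substitution.

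For \cref{part:GPR_intersection1}, a short induction using \cref{step:baseFlag,step:firstComponent} shows
\[
 s_{n}^{2k}(\baseFlag,0) = (\baseFlag,\, 2k \bmod 2s), \qquad s_{n}^{2k+1}(\baseFlag,0) = (s_{n-1}^{-1}\baseFlag,\, (2k+1) \bmod 2s).
\]
Since every element of $\langle s_{1},\dots,s_{n-1}\rangle$ fixes the second coordinate and $\conw(\cK)$ acts freely on $\fw(\cK)$, any nontrivial power of $s_{n}$ lying in $\langle s_{1},\dots,s_{n-1}\rangle$ would have to fix $(\baseFlag,0)$ and hence equal $\epsilon$; the same display exhibits $2s$ distinct points in the $s_{n}$-orbit of $(\baseFlag,0)$, so the order of $s_{n}$ is divisible by $2s$, giving $2s \mid p_{n}$. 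For \cref{part:GPR_intersection2}, fixing $k \in \{2,\dots,n-1\}$, take $G_{i_{k}} = G_{\ell}$ for any $\ell$ and let $D_{k}$ be the $\{k,\dots,n\}$-component of $(\baseFlag,\ell)$; a case analysis of \cref{step:firstComponent,step:baseOfFacets,step:extendingM} (using the families $\mathcal{C}_{k}^{\ell}$ and the colouring $\bar{c}$) shows that every $s_{n}$-edge with an endpoint in $E_{k}^{\ell}$ has its other endpoint in some $E_{k}^{\ell'}$, whence $D_{k} \cap G_{\ell} = E_{k}^{\ell}$.

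I expect \cref{part:GPR_intersection2} to be the main obstacle, since it requires carefully tracking how the matching $M$ interacts with the sections $E_{k}^{\ell}$ at every level and every $k$; the role of the families $\mathcal{C}_{k}^{\ell}$ introduced before \cref{step:baseOfFacets}, together with the colouring $\bar{c}$, is precisely to keep these interactions compatible. Once all four hypotheses are verified, \cref{thm:GPRgraphs} delivers the chiral extension $\cP$, and the divisibility established above completes the proof of \cref{prop:chiralExtensionDBP}.
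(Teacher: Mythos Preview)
Your proposal is correct and follows essentially the same route as the paper: verify the four hypotheses of \cref{thm:GPRgraphs}, handling \cref{part:GPR_rels} via the conjugation action of $t$ (the paper cites \cref{lem:relsOfExtensionsSigmasAndTau} directly, but your reduction to $(s_{k}\cdots s_{n-2})^{2}=\epsilon$ is equivalent once one checks that $\rho$ fixes the product $s_{k}\cdots s_{n-2}$), \cref{part:GPR_intersection1} via the explicit $s_{n}$-orbit of $(\baseFlag,0)$, and \cref{part:GPR_intersection2} via the claim that $t$ sends $E_{k}^{\ell}$ into some $E_{k}^{\ell'}$. The paper's detailed argument for this last claim uses freeness of $\conw(\cK)$ together with the intersection property $\langle s_{1},\dots,s_{n-2}\rangle \cap \langle s_{k},\dots,s_{n-1}\rangle = \langle s_{k},\dots,s_{n-2}\rangle$ and the fact that $\rho$ preserves $\langle s_{k},\dots,s_{n-2}\rangle$, exactly the ingredients you anticipate.
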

\begin{proof}
  We will use \cref{thm:GPRgraphs}. \cref{part:GPR_facets} there follows from our construction, since $G_{1}, \dots, G_{2s}$ are copies of $\cay(\cK)$. To prove \cref{part:GPR_rels} we need to see that the action of $(s_{k} \cdots s_{n})^{2}$ is trivial on every vertex. According to \cref{lem:relsOfExtensionsSigmasAndTau} it suffices to prove that the following relations hold:
  \[\begin{aligned}
	  t^{2} &= \id, \\
    t s_{n-2} t &= s_{n-2}^{-1},\\
    t s_{n-3} t &= s_{n-3}s_{n-2}^{2},\\
    t s_{i}t &= s_{i} && \text{for}\ 1 \leq i \leq n-4.
  \end{aligned}\]
	The first relation holds by construction since $t$ swaps the two vertices of every edge of $M$.
	The other three relations are a consequence of the construction of $M$ in \cref{step:extendingM}. 
	Recall that $\bar{s_{n-2}} = s_{n-2}^{-1}$, $\bar{s_{n-3}} = s_{n-3} s_{n-2}^{2}$, and $\bar{s_{i}} = s_{i}$ for $i \leq n-4$.
	
	To prove \cref{part:GPR_intersection1} of \cref{thm:GPRgraphs} consider the action of $s_{n}$ and $s_{n}^{2}$ on a vertex $(\baseFlag,\ell)$: 
	\begin{equation}\begin{aligned}
	s_{n}\left(\baseFlag, \ell\right) %
  &=s_{n-1}^{-1}t\left(\baseFlag, \ell\right) \\
	&=s_{n-1}^{-1}\left(\baseFlag , \ell+ (-1)^{\ell}\right) \\
	&=\left(s_{n-1}^{-1}\baseFlag, \ell + (-1)^{\ell}\right),
	\end{aligned}\end{equation}
	\begin{equation}\label{eq:DBP_orderOfSigman}\begin{aligned}
		s_{n}^{2}\left(\baseFlag, \ell\right) %
		&= s_{n}\left(s_{n-1}^{-1}\baseFlag, \ell + (-1)^{\ell}\right) \\
		&= s_{n-1}^{-1} t\left(s_{n-1}^{-1}\baseFlag, \ell + (-1)^{\ell}\right) \\
		&= s_{n-1}^{-1} \left(s_{n-1}\baseFlag, \ell + (-1)^{\ell}+(-1)^{\ell+1}\bar{c}(s_{n-1}^{-1}\baseFlag)\right)\\
		&=s_{n-1}^{-1}\left(s_{n-1}\baseFlag, \ell + 2 (-1)^{\ell} \right)\\
		&= \left(\baseFlag, \ell +2 (-1)^{\ell}\right),
	\end{aligned}\end{equation}
	where we have used that $\ell + (-1)^{\ell} \equiv \ell +1 \pmod{2}$ and that $\bar{c}(s^{-1}_{n-1}\baseFlag) = -1$.  
	
	It follows that if $s_{n}^{j}(\baseFlag,0) = (\Psi, 0)$, then $j$ is a multiple of $2s$ and $\Psi = \baseFlag$. 
	In this situation $s_{n}^{j}$ fixes $(\baseFlag,0)$. 
	Since the unique element of $\langle s_{1}, \dots, s_{n-1} \rangle$ fixing $(\baseFlag,0)$ is $\id$ (see \cref{prop:autpAsMonPChir}), $ \langle s_{1}, \dots, s_{n-1} \rangle \cap \langle s_{n} \rangle = \{\id\}$, proving \cref{part:GPR_intersection1}.
	
	Let $k \in \{2, \dots, n-1\}$ and let $D_{k}$ be the $\{k, \dots, n\}$-component of $(\baseFlag, 0)$. 
	In order to prove \cref{part:GPR_intersection2}, we will prove that $D_{k}\cap G_{\ell} = E_{k}^{\ell}$ for every $\ell \in \bZ_{2s}$. 
	It is clear that every vertex of $E_{k}^{\ell}$ belongs to $D_{k} \cap G_{\ell}$, since the elements of $\langle s_{k}, \dots, s_{n-1}\rangle$ map vertices of $G_{\ell}$ to vertices of $G_{\ell}$.  
	To show the other inclusion, we will prove that if $(\Psi, \ell)$ is a vertex in $E_{k}^{\ell}$ then it is matched to a vertex in $E_{k}^{\ell'}$ where $\ell' = \ell + (-1)^{\ell} \bar{c}(\Psi)$ (see \cref{fig:GPR_ChirExtDBP_components}). 
	
\begin{figure}
	\centering
	\def\svgwidth{\textwidth}
	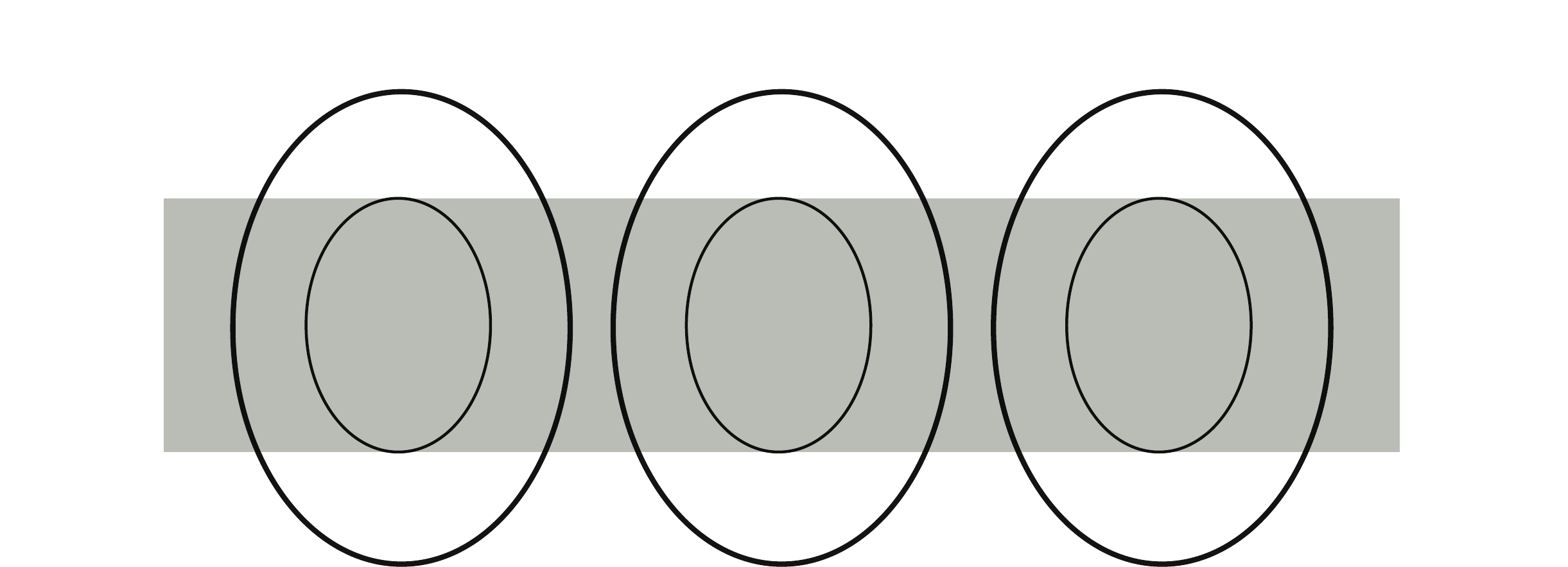
	\caption{The intersection of $D_{k}$ with $G_{\ell}$ is $E_{k}^{\ell}$}\label{fig:GPR_ChirExtDBP_components}
\end{figure}	
Let $(\Psi, \ell)$ be a vertex of $E_{k}^{\ell}$. 
Let $F$ be the $\{1, \dots, n-2\}$-component of $G_{\ell}$ containing $(\Psi, \ell)$. 
Note that $(\Psi, \ell) \in F \cap E_{k}^{\ell}$. Observe that for every vertex $(\Phi, \ell)$ in $F$ we have that $\bar{c}(\Phi) = c(F)$. 
Define $\ell' = \ell + (-1)^{\ell} c(F)$. 
Since $F$ intersects $E_{k}^{\ell}$, then $F \in \mathcal{C}_{j}^{\ell}$ for some $j \geq k$.
In \cref{step:baseOfFacets} of the construction we picked a vertex $(\Phi_{F}, \ell)$ in $E_{j}^{\ell}$ (and hence in $E_{k}^{\ell}$) and matched it to $(\Phi_{F}, \ell')$. 
Since $(\Phi_{F}, \ell)$ and $(\Psi, \ell)$ belong to $F$, there exists $w \in \langle s_{1}, \dots, s_{n-2} \rangle$ such that %
	\[w(\Phi_{F}, \ell) = (\Psi, \ell).\]%
Similarly, since both vertices belong to $E_{k}^{\ell}$, there exists $v \in \langle s_{k}, \dots, s_{n-1} \rangle$ such that%
	  \[v(\Phi_{F}, \ell) = (\Psi, \ell).\]
But since the action of $\langle s_{1}, \dots, s_{n-1} \rangle$ is free (see \cref{prop:autpAsMonPChir}), it follows that $w = v$. 
Therefore, $w$ is an element of $\langle s_{1}, \dots s_{n-2} \rangle \cap \langle s_{k}, \dots, s_{n-1} \rangle = \langle s_{k}, \dots, s_{n-2} \rangle$. 
This implies that $\bar{w} \in \langle s_{k}, \dots, s_{n-2} \rangle$.
	 
Since both $(\baseFlag, \ell)$ and $(\Psi, \ell)$ belong to $E_{k}^{\ell}$, then there exists $u \in \langle s_{k}, \dots, s_{n-1} \rangle$ such that %
	\[(\Psi, \ell) = u(\baseFlag, \ell) .\]%
Finally, in \cref{step:extendingM} we matched $(\Psi, \ell)$ to %
	\[(\bar{w}\Phi_{F}, \ell') = \bar{w}w^{-1}(\Psi, \ell')  = \bar{w}w^{-1}u(\baseFlag, \ell'),\]%
but $\bar{w}w^{-1}u \in \langle s_{k}, \dots, s_{n-1} \rangle$, implying that $(\Psi, \ell)$ is matched to a vertex in $E^{\ell'}_{k}$. 
 	
As a consequence of \cref{thm:GPRgraphs}, the group $\langle s_{1}, \dots, s_{n} \rangle$ is the automorphism group of a chiral extension $\cP$ of $\cK$. To see that the last entry of the Schläfli symbol of $\cP$ must be a multiple of $2s$ just observe that the orbit of $(\baseFlag, 0)$ under $\langle s_{n} \rangle$ has length $2s$ (see \cref{eq:DBP_orderOfSigman}).
\end{proof}

\cref{thm:ManyExtensionsDBP} is an immediate consequence of \cref{prop:chiralExtensionDBP}.

\section{Extensions of chiral polytopes with regular quotients} \label{sec:quotients}

In this section we  describe a technique to build an infinite family $\{\cP_{s} : s \in \bN,\ s \geq 2 \}$ of chiral extensions of a given chiral $n$-polytope $\cK$ with regular facets from a particular chiral extension $\cP$ of $\cK$. 
The polytope $\cP_{s}$ satisfies the property that if $\cP$ has type $\{p_{1}, \dots, p_{n-1}, q\}$, then $\cP_{s}$ is of type $\{p_{1}, \dots, p_{n-1}, \lcm(2s,q)\}$. 
To guarantee the existence of such a family we require the existence of a regular $n$-maniplex $\cR$ with at least two facets such that $\cK$ covers $\cR$.

In this construction we use the maniplex $\twoSM$. 
Given an $n$-maniplex $\cM$, the maniplex $\twoSM$ is a $(n+1)$-maniplex whose facets are isomorphic to $\cM$.
The maniplex $\twoSM$ is finite (resp. regular) if and only if $\cM$ is finite (resp. regular).
Moreover, if every $(n-2)$-face of $\cM$ belongs to two diferent $(n-1)$-faces, then there are $2s$ facets around each $(n-2)$-face of $\twoSM$.
In particular, if $\cM$ is regular with at least two facets, then $\twoSM$ is regular and the last entry of the Schläfli symbol of $\twoSM$ is $2s$.
If $\cK$ is an abstract regular polytope the resulting maniplex $\twoSM[\cK]$ is actually a polytope that is isomorphic to the polytope $\left(\twoSM[\cK^{\ast}]\right)^{\ast}$ introduced by Pellicer in \cite{Pellicer_2009_ExtensionsRegularPolytopes}.
This construction generalises that of Pellicer in the same way as the construction $\hat{2}^{\cM}$ in \cite[Section 6]{DouglasHubardPellicerWilson_2018_TwistOperatorManiplexes} generalises Danzer's $2^{\cK}$ in \cite{Danzer_1984_RegularIncidenceComplexes}.
 
Let $\cM$ be an $n$-maniplex with base flag $\baseFlag$.
Let $\cM_{n-1}$ denote the set of facets of $\cM$ and let $m = |\cM_{n-1}|$.
Recall that since $\cM$ is connected, then $m$ is either finite or countably infinite.
Let $\{F_{j} : 0 \leq j < m \}$ be a labelling of $\cM_{n-1}$ so that  $ \baseFlag \in F_{0}$.
Let $\fl(\cM)$ and $\con(\cM)=\langle r_{0}, \dots, r_{n-1} \rangle$ denote the set of flags and the connection group of $\cM$ respectively. 
Choose $s \in \bN$ such that $s \geq 2$ and let \[A = \bigoplus_{0 \leq j < m}\bZ_{s} = \big\{ \vx = (x_{j})_{0\leq j < m} : x_{j} = 0 \text{ for all but finitely many } j < m \big\}.  \] Let \[U= \left\{\vx \in A : \sum_{0 \leq j < m }x_{j} = 0\right\} \leq A.\]
 
Note that if $m$ is finite, then $A$ is the direct product of $m$ copies of $\bZ_{s}$ and $|U| = s^{m-1}$. 
In most of our applications this is the case.
This motivates our use of vector notation instead of that of formal sequences.
We also abuse of language and call ``vectors'' the elements of $U$ . 

Consider the vectors $\va_{j} = \ve_{j}-\ve_{0}$, where $\ve_{i}$ denotes the vector of $A$ with $i^{th}$ entry equal to $1$ and every other entry equal to $0$. 
Note that $\va_{j} = (-1, \dots, 0, 1, 0 \dots )$ if $0<j$ and that $\va_{0}=\vect{0}$. 
Then \emph{$\twoSM$} is the $(n+1)$-maniplex $\left( \fl_{s}, \{\hat{r}_{0}, \dots, \hat{r}_{n}\}\right)$, where 
\[\begin{aligned}
    \fl_{s} &= \fl(\cM)\times U \times \bZ_{2},\\ 
    \hat{r}_{i}(\Phi, \vx, \delta) &= (r_{i}\Phi, \vx, \delta) && \text{for}\ 0\leq i \leq n-1, \\
    \hat{r}_{n}(\Phi, \vx, \delta) &= (\Phi, \vx+(-1)^{\delta} \va_{j}, 1-\delta) && \text{whenever}\  \Phi \in F_{j},\\
  \end{aligned}
\]

Now we need to prove that the pair $(\fl_{s}, \{\hat{r}_{0}, \dots, \hat{r}_{n}\})$ indeed defines a maniplex.
First observe that if $i \in \{0, \dots, n\}$, then $\hat{r}_{i}$ is a permutation of $\fl_{s}$, since $r_{i} \Phi \in \fl(\cM)$ and $\va_{i} \in U$. 
Clearly $\hat{r}_{i}$  is an involution for $i \in \{0, \dots, n-1\}$. 
Now assume that $\Phi$ is a flag of $\cM$ with $\Phi  \in F_{j}$, then %
  \[\begin{aligned}
  \hat{r}_{n}^{2}\left(\Phi, \vx, \delta\right) &= \hat{r}_{n}\left(\Phi, \vx + (-1)^{\delta}\va_{j}, 1-\delta \right)\\ 
  &= \left(\Phi, \vx+ (-1)^{\delta}\va_{j} + (-1)^{1-\delta}\va_{j}, \delta\right)\\ 
  &= \left( \Phi, \vx, \delta \right).
  \end{aligned}\]
This proves that $\hat{r}_{n}$ is an involution.
Observe that $r_{i} \mapsto \hat{r}_{i} $ for $0\leq i \leq n-1$ defines an isomorphism between $\con(\cM)$ and $\langle \hat{r}_{0}, \dots, \hat{r}_{n-1} \rangle$. 
This isomorphism proves that the facets of $\twoSM$ are isomorphic to $\cM$. 
Moreover, if $w \in \con(\cM)$ we may think of $w$ as en element of $\con(\twoSM)$. 

The fact that $\con(\twoSM)$ is transitive on $\fl_{s}$ follows from the facts that $\{a_{j} : 0 \leq j < m\}$ is a generating set of $U$ and that $\{r_{0}, \dots, r_{n-1}\}$ is transitive on $\fl(\cM)$. 
%
%
Note that for ever $i \in \left\{ 0, \dots, n \right\} $, the permutation $\hat{r}_{i}$ is fixed-point-free.
If $i,j \in \{0, \dots, n-1\}$ it is clear that $\hat{r}_{i}(\Phi, \vx, \delta)\neq \hat{r}_{j}(\Phi,\vx, \delta)$ since $\cM$ is a maniplex. 
For $i \in \{0, \dots, n-1\}$, $\hat{r}_{n} (\Phi, \vx, \delta) \neq \hat{r}_{i}(\Phi, \vx, \delta)$ since they have different third coordinates. 

Let $(\Phi, \vx, \delta)$ be a flag of and let $0 \leq j < m$ be such that $ \Phi \in F_{j}$. 
If  $i \in \{0, \dots, n-2\}$ then $ r_{i}\Phi \in F_{j}$, and then it follows that 
\[\hat{r}_{i}\hat{r}_{n}(\Phi, \vx, \delta) = (r_{i} \Phi, \vx + (-1)^{\delta}\va_{j}, 1-\delta) = \hat{r}_{n}\hat{r}_{i}(\Phi,\vx, \delta).\] 
Then $\hat{r}_{n}$ commutes with $\hat{r}_{i}$ whenever ${i} \leq n-2$. 
The elements $\hat{r}_{i}$ and $\hat{r}_{j}$ commute for $i,j \in \{0, \dots, n-1\}$ with $i \neq j$ and $|i-j|\geq 2$ because $r_{i}$ and $r_{j}$ commute. 
We have proved then that $\twoSM$ is a maniplex.
	
Let $(\Phi, \vx, \delta)$ be a flag and assume that $ \Phi \in F_{j}$. 
Let $0 \leq k < m $ be such that $r_{n-1}(\Phi) \in F_{k}$. 
Observe that if every $(n-2)$-face of $\cM$ is incident to two $(n-1)$-faces, then $j \neq k$. 
Consider the action of $(\hat{r}_{n} \hat{r}_{n-1})^{2}$ on an arbitrary flag $(\Phi, \vx, \delta)$:
\[\begin{aligned}
 (\hat{r}_{n}\hat{r}_{n-1})^{2}\left( \Phi, \vx, \delta \right) %
 &= \hat{r}_{n}\hat{r}_{n-1}\hat{r}_{n} \left(r_{n-1}\Phi, \vx ,\delta \right) \\
 &= \hat{r}_{n} \hat{r}_{n-1} \left(r_{n-1} \Phi, \vx +(-1)^{\delta}\va_{k}, 1-\delta \right)\\
 &= \hat{r}_{n} \left(\Phi, \vx +(-1)^{\delta}\va_{k}, 1-\delta \right)\\
 &= \left( \Phi, \vx + (-1)^{\delta} \va_{k} + (-1)^{1-\delta} \va_{j}, \delta\right)\\
 &=\left(\Phi, \vx + (-1)^{\delta} (\va_{k} - \va_{j}), \delta \right).
\end{aligned}\] 
It follows that orbit of $(\Phi, \vx, \delta)$ under  $\langle (\hat{r}_{n} \hat{r}_{n-1})^{2} \rangle$ has length the same as the order of $(\va_{k} - \va_{j})$	in $U$, which is $s$, provided that $j \neq k$. 
Therefore, if every $(n-2)$-face of $\cM$ is incident to two $(n-1)$-faces, then the order of $(\hat{r}_{n} \hat{r}_{n-1})^{2}$ is $s$. 
Then we have that there are $2s$ facets incident to each $(n-2)$-face of $\twoSM$. 
In particular, if $\cM$ is of type $\{p_{1}, \dots, p_{n-1}\}$, then $\twoSM$ is of type $\{p_{1}, \dots, p_{n-1}, 2s\}$.			                                
If $\cM$ is a regular maniplex, then the condition that every $(n-2)$-face of $\cM$ is incident to two facets of $\cM$ is equivalent to requiring that $\cM$ has at least two facets.
	
Observe that the orbit of a flag $\left( \Phi, \vx, \delta \right)$ under the group $\langle \hat{r}_{0}, \dots, \hat{r}_{n-1} \rangle$ is \[\{(\Psi, \vx, \delta) : \Psi \in \fl(\cM)\}.\] 
In particular, every facet of $\twoSM$ is determined by a pair $(\vx, \delta)$ with $\vx \in U$ and $\delta \in \bZ_{2}$.
Furthermore, the maniplex $\twoSM$ is dually bipartite; and the necessary colouring is given by $\delta$. 
The \emph{base facet} of $\twoSM$ is the facet determined by the pair $(\vect{0},0)$. 
This facet together with the base flag $\baseFlag$ of $\cM$ determines the base flag $(\baseFlag, \vect{0}, 0)$ of $\twoSM$. 

%
%
Now we describe some symmetry properties of $\twoSM$. 
Assume that $\gamma \in \aut(\cM)$. 
Observe that $\gamma$ acts on $\{j : 0 \leq j < m\}$ by permuting the elements the same way it permutes the facets of $\cM$. 
If $\vx =(x_{j} : j < m)$, then take $\vx \gamma := (x_{j \gamma^{-1}}: j < m)$. 
This defines a right action of $\aut(\cM)$ on $U$. 
Observe that this action is linear in the sense that $(\vx + \vy)\gamma = \vx \gamma+ \vy \gamma$ for every $\vx, \vy \in U$. 
Note also that $\ve_{j}\gamma = \ve_{j\gamma}$ for every $j \in \{1, \dots, m\}$.  
In particular $\va_{j} \gamma = \ve_{j} \gamma - \ve_{0} \gamma = \ve_{j\gamma} - \ve_{0 \gamma}$. 

For $\gamma \in \aut(\cM)$, define $\bar{\gamma}:\fl_{s} \to \fl_{s}$ by \[(\Phi, \vx, \delta) \bar{\gamma} = (\Phi \gamma, \vx \gamma +\delta \va_{0\gamma}, \delta).\] 
\begin{prop}\label{prop:twoSM_StabBaseFacet}
The mapping $\gamma \mapsto \bar{\gamma}$ defines an embedding of $\aut(\cM)$ into $\aut(\twoSM)$. 
In fact, if $\bar{\cM}$ denotes the base facet of $\twoSM$, then the image of $\aut(\cM)$ is precisely $\stab_{\aut(\twoSM)}(\bar{\cM})$.
\end{prop}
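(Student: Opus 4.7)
The plan is to establish, in sequence, the four claims implicit in the statement: (a) for each $\gamma\in\aut(\cM)$, $\bar{\gamma}$ is actually a maniplex automorphism of $\twoSM$; (b) the assignment $\gamma\mapsto\bar{\gamma}$ is a group homomorphism; (c) it is injective; and (d) the image equals $\stab_{\aut(\twoSM)}(\bar{\cM})$.

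For (a), since automorphisms of a maniplex are those flag permutations commuting with every connection involution, it is enough to verify that $\bar{\gamma}$ commutes with each $\hat{r}_i$. For $i\leq n-1$ this is immediate, as only the first coordinate is affected and $\gamma\in\aut(\cM)$ commutes with $r_i$. The substantive case is $i=n$: when $\Phi\in F_j$ we have $\Phi\gamma\in F_{j\gamma}$, and expanding both sides of $\hat{r}_n\bar{\gamma}(\Phi,\vx,\delta)=\bar{\gamma}\hat{r}_n(\Phi,\vx,\delta)$, together with the linearity of the right action and the relation $\ve_k\gamma=\ve_{k\gamma}$, reduces the commutation to the single identity
\[ \va_j\gamma + \va_{0\gamma} = \va_{j\gamma}, \]
which is a short telescope. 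The correction term $\delta\va_{0\gamma}$ in the definition of $\bar{\gamma}$ is precisely what absorbs the discrepancy between $\va_j\gamma$ and $\va_{j\gamma}$ that arises when $0\gamma\neq 0$.

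For (b), comparing $(\Phi,\vx,\delta)\bar{\gamma}_1\bar{\gamma}_2$ with $(\Phi,\vx,\delta)\overline{\gamma_1\gamma_2}$ reduces to $\va_{0\gamma_1}\gamma_2+\va_{0\gamma_2}=\va_{0\gamma_1\gamma_2}$, which is a special case of the identity from (a). For (c), if $\bar{\gamma}$ is the identity on $\fl_s$, then inspecting first coordinates gives $\Phi\gamma=\Phi$ for every $\Phi\in\fl(\cM)$, and the free action of $\aut(\cM)$ on $\fl(\cM)$ forces $\gamma=\id$.

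Finally, for (d), the inclusion $\bar{\gamma}\in\stab(\bar{\cM})$ is immediate from $(\Phi,\vect{0},0)\bar{\gamma}=(\Phi\gamma,\vect{0},0)$. For the reverse inclusion, take $\alpha\in\stab_{\aut(\twoSM)}(\bar{\cM})$. Transporting the restriction of $\alpha$ to $\bar{\cM}$ through the natural bijection $(\Phi,\vect{0},0)\leftrightarrow\Phi$ yields a permutation $\gamma$ of $\fl(\cM)$ which, from the commutation of $\alpha$ with $\hat{r}_i$ for $i\leq n-1$, commutes with each $r_i$; hence $\gamma\in\aut(\cM)$. By construction, $\alpha$ and $\bar{\gamma}$ agree on the base flag $(\baseFlag,\vect{0},0)$, and the free action of $\aut(\twoSM)$ on $\fl(\twoSM)$ then forces $\alpha=\bar{\gamma}$. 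The main (though brief) work lies in the verification of the commutation $\hat{r}_n\bar{\gamma}=\bar{\gamma}\hat{r}_n$ in (a); once the telescoping identity above is in hand, everything else follows formally.
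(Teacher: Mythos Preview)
Your proposal is correct and follows essentially the same route as the paper: verify commutation of $\bar{\gamma}$ with each $\hat{r}_i$ (the only nontrivial case being $i=n$), then deduce the homomorphism, injectivity, and the two inclusions for the stabilizer via freeness of the action. Your organization around the single telescoping identity $\va_j\gamma+\va_{0\gamma}=\va_{j\gamma}$ is a bit cleaner than the paper's direct coordinate expansion, and you make the homomorphism step (b) explicit where the paper leaves it implicit, but these are stylistic differences rather than a different argument.
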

\begin{proof}
In order to see that, we first need to prove that $\bar{\gamma}$ actually defines an automorphism. 
Note that $\bar{\gamma}$ is a permutation of $\fl_{s}$ with inverse $\bar{\gamma^{-1}}$. 

Now we need to prove that 
\[ \hat{r}_{i} \left( (\Phi, \vect{x}, \delta) \bar{\gamma}  \right) = \left( \hat{r}_{i}(\Phi, \vect{x}, \delta) \right)\bar{\gamma} \] 
for every $i \in \left\{ 0, \dots, n \right\}$ and every $(\Phi, \vect{x}, \delta) \in \fl_{s}$.
 
If $i \in \{1, \dots, n-1\}$, then %
\[\begin{aligned}
	  \hat{r}_{i}\left( \left( \Phi, \vx, \delta\right)\bar{\gamma}\right) %
	  &= \hat{r}_{i}\left(\Phi\gamma, \vx\gamma+\delta\va_{0\gamma}, \delta\right) \\
	  &= \left(r_{i} \Phi \gamma, \vx \gamma + \delta \va_{0 \gamma}, \delta \right)\\
	  &= \left(r_{i} \Phi, \vx, \delta\right) \bar{\gamma} \\
	  &= \left(\hat{r}_{i}\left( \Phi, \vx, \delta \right)\right)\bar{\gamma}.
  \end{aligned}\]
So it remains to show that $\hat{r}_{n}\left( \left( \Phi, \vx, \delta \right)\bar{\gamma} \right)  = \left( \hat{r}_{n}\left(  \Phi, \vx, \delta\right) \right)\bar{\gamma}$.

Consider the left side of the previous equation:
\[\begin{aligned}
	\hat{r}_{n}\left( \left( \Phi, \vx, \delta \right)\bar{\gamma} \right)%
	&= \hat{r}_{n}\left( \Phi \gamma, \vx \gamma + \delta \va_{0\gamma}, \delta \right)\\
	&=\left( \Phi \gamma, \vx \gamma + \delta \va_{0\gamma} + (-1)^{\delta}\va_{j}, 1-\delta \right),
\end{aligned}\] 
 where $j$ is such that $ \Phi \gamma \in F_{j} $. 
 Now the right side:
\[\begin{aligned}
  \left( \hat{r}_{n}\left( \Phi, \vx, \delta \right) \right)\bar{\gamma} %
  &=\left( \Phi, \vx + (-1)^{\delta}\va_{k}, 1-\delta \right)\bar{\gamma}\\
  &=\left( \Phi \gamma, (\vx + (-1)^{\delta} \va_{k})\gamma + (1-\delta)\va_{0\gamma}, 1-\delta \right)
\end{aligned}\]
where $k$ is such that $\Phi \in F_{k}$. 
Note that if these two are different, then they differ in the second coordinate. 
Observe also that $F_{k} \gamma = F_{j}$.
 
Compare the second coordinates. 
On the one hand,%
\[\begin{aligned}
	\vx \gamma + \delta \va_{0\gamma} + (-1)^{\delta}\va_{j}%
	&= \vx \gamma + \delta(\ve_{0\gamma}-\ve_{0}) + (-1)^{\delta}(\ve_{j}-\ve_{0})\\
	&=\vx \gamma + \delta \ve_{0\gamma} + \left(-\delta - (-1)^{\delta}\right)\ve_{0} + (-1)^{\delta}\ve_{j},
\end{aligned}\]%
and on the other%
\[\begin{aligned}
	(\vx + (-1)^{\delta} \va_{k})\gamma + (1-\delta)\va_{0\gamma}%
	&= \vx \gamma + (-1)^{\delta} (\ve_{k\gamma} - \ve_{0\gamma})+(1-\delta)(\ve_{0\gamma}-\ve_{0}) \\
	&= \vx \gamma + \left(- (-1)^{\delta} + (1-\delta)\right)e_{0\gamma} - (1-\delta)\ve_{0} + (-1)^{\delta}\ve_{j}.
\end{aligned}\]
Finally, observe that $\left( - (-1)^{\delta} + (1-\delta) \right) = \delta$ and $\left(-\delta - (-1)^{\delta}\right)= (\delta-1)$ for $\delta \in \left\{ 0,1 \right\}$. 
This proves that $\bar{\gamma}$ is indeed an automorphism of $\twoSM$. 

Clearly if $(\Phi, \vect{0}, 0)$ is a flag of the base facet, then \[(\Phi, \vect{0},0)\bar{\gamma} = (\Phi\gamma, \vect{0},0).\] 
Since $\Phi$ is arbitrary, this defines a group homomorphism such that the image of $\aut(\cM)$ is a subgroup of $\stab_{\aut(\twoSM)}(\bar{\cM})$. 
Since the action of the automorphism group on the set of flags is free, this homomorphism is injective.   
Every element of $\stab_{\aut(\twoSM)}(\bar{\cM})$ induces an automorphism of $\bar{\cM}$.
Since the action of $\aut(\twoSM)$ on flags is free, these automorphisms must belong to the image of $\aut(\cM)$.
\end{proof}

From now on we will abuse notation and denote $\bar{\gamma}$ simply by $\gamma$ and think of $\aut(\cM)$ as a subgroup of $\aut(\twoSM)$. 
Similarly, if there is no place for confusion, we will denote the base facet of $\twoSM$ by $\cM$ instead of $\bar{\cM}$.

Now, for every $\vy \in U$ consider $\tau_{\vy}: \fl_{s} \to \fl_{s}$ given by $\tau_{\vy}: (\Phi, \vx, \delta) \mapsto (\Phi, \vx+ \vy, \delta)$. 
Consider also the mapping $\chi: \fl_{s} \to \fl_{s}$ given by $\chi: (\Phi, \vx, \delta) \mapsto (\Phi, - \vx, 1-\delta)$. 
Straighforward computations show that $\tau_{\vy}$ and $\chi$ define automorphisms of $\twoSM$.
In fact, it can be proved that \[\left\langle  \left\{ \chi \right\}  \cup  \left\{ \tau_{\vy} : \vy  \in U \right\}  \right\rangle =  \left\langle \chi \right\rangle \ltimes \left\langle  \tau_{\vy} : \vy  \in U  \right\rangle  \cong \bZ_{2} \ltimes U .  \]
Moreover, we can fully describe the structure of $\aut(\twoSM)$.

\begin{thm}\label{thm:autTwoSM}
  Let $\cM$ be an $n$-maniplex such that every $(n-2)$-face of $\cM$ is incident to two facets. Then \[\aut(\twoSM) \cong \aut(\cM)\ltimes \left( \bZ_{2} \ltimes U \right). \]
\end{thm}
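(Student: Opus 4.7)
The plan is to realise $\aut(\twoSM)$ as an internal semidirect product $\aut(\cM) \ltimes N$, where $N = \left\langle \chi \right\rangle \ltimes \left\langle \tau_{\vy} : \vy \in U \right\rangle \cong \bZ_{2} \ltimes U$ is the subgroup already exhibited in the excerpt. This amounts to checking three things: (i) $\aut(\twoSM) = \aut(\cM) \cdot N$; (ii) $\aut(\cM) \cap N = \{\id\}$; and (iii) $N$ is normal in $\aut(\twoSM)$. Once these are established, the claimed isomorphism is a standard consequence of the recognition theorem for internal semidirect products.

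Both (i) and (ii) will follow from analysing the action of $N$ on the set of facets of $\twoSM$, which are indexed by $U \times \bZ_{2}$. Since $\tau_{\vy}$ sends the facet labelled $(\vx,\delta)$ to $(\vx+\vy,\delta)$ and $\chi$ sends it to $(-\vx,1-\delta)$, the group $N$ acts sharply transitively on facets (direct verification: the stabiliser of the base facet $(\vect{0},0)$ in $N$ is trivial, and orbits have size $2|U|$). Consequently, for any $\alpha \in \aut(\twoSM)$ there is a unique $\eta \in N$ carrying $\bar{\cM}$ to $\bar{\cM}\alpha$, so $\alpha\eta^{-1}$ stabilises $\bar{\cM}$; by \cref{prop:twoSM_StabBaseFacet} it coincides with some $\gamma \in \aut(\cM)$, yielding $\alpha = \gamma\eta$. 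Any element in the intersection $\aut(\cM) \cap N$ fixes $\bar{\cM}$ (as an element of $\aut(\cM)$) and lies in a subgroup acting freely on facets (as an element of $N$), so it must be trivial.

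For (iii), in view of (i) it suffices to verify that each $\gamma \in \aut(\cM)$ normalises $N$, i.e.\ that $\gamma^{-1}\tau_{\vy}\gamma$ and $\gamma^{-1}\chi\gamma$ lie in $N$. Using the formula $(\Phi,\vx,\delta)\gamma = (\Phi\gamma,\vx\gamma + \delta\va_{0\gamma},\delta)$ from \cref{prop:twoSM_StabBaseFacet} together with the linearity of the right action of $\aut(\cM)$ on $U$ and the identity $\va_{0\gamma^{-1}}\gamma = -\va_{0\gamma}$ (immediate from $\ve_{j}\gamma = \ve_{j\gamma}$), a short computation should give $\gamma^{-1}\tau_{\vy}\gamma = \tau_{\vy\gamma}$ and $\gamma^{-1}\chi\gamma = \tau_{\va_{0\gamma}}\chi$, both of which lie in $N$. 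The main technical nuisance here is the extra summand $\delta\va_{0\gamma}$ in the definition of $\bar{\gamma}$: naively it looks as if conjugation of $\chi$ might not stay in the normal subgroup, but the identity $\va_{0\gamma^{-1}}\gamma = -\va_{0\gamma}$ together with the arithmetic $\delta + (1-\delta) = 1$ in $\bZ_{2}$ produces exactly the cancellation needed. Once (i)--(iii) are in place, $\aut(\twoSM) = \aut(\cM) \ltimes N \cong \aut(\cM) \ltimes (\bZ_{2} \ltimes U)$, as claimed.
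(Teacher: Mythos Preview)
Your proposal is correct and follows essentially the same route as the paper: both establish the factorisation $\aut(\twoSM)=\aut(\cM)\cdot N$ by combining the transitivity of $N$ on facets with \cref{prop:twoSM_StabBaseFacet}, verify trivial intersection, and then compute the two conjugates $\gamma^{-1}\tau_{\vy}\gamma=\tau_{\vy\gamma}$ and $\gamma^{-1}\chi\gamma$ to obtain normality. One small slip: the conjugate of $\chi$ is $\chi\tau_{\va_{0\gamma}}$ (equivalently $\tau_{-\va_{0\gamma}}\chi$, since $\chi\tau_{\vy}=\tau_{-\vy}\chi$), not $\tau_{\va_{0\gamma}}\chi$; this does not affect the conclusion that the conjugate lies in $N$.
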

\begin{proof}
  The discussion above implies that \[\left\langle \aut(\cM) \cup \left\{ \chi \right\} \cup \left\{ \tau_{\vy}: \vy\in U \right\} \right\rangle \leq \aut(\twoSM).\] 
  To prove the other inclusion observe that if an automorphism $\omega \in \aut(\twoSM)$ maps the base flag $\left( \baseFlag, \vect{0}, 0 \right)$ to $(\Psi, \vx, \delta)$, there is an automorphism of $\bZ_{2} \ltimes U$ mapping $(\baseFlag, \vect{0},0)$ to $(\Psi, \vect{0}, 0)$, and then there must be an automorphism in $\aut(\cM) $ mapping $\Psi_{0}$ to  $\Phi$, since $\aut(\cM)$ is the stabilizer of the base facet. 
  The inclusion follows from the fact the the action of $\aut(\twoSM)$ on $\fl_{s}$ is free.
  
  It just remains to determine the structure of the group. It is clear that $\left( \bZ_{2} \ltimes U \right) \cap \aut(\cM) = \{\id\}$, since the former fixes the first coordinate of every flag and the only element of $\aut(\cM)$ that fixes a flag of $\cM$ is $\id$. Take $\gamma \in \aut(\cM)$, $\vy \in U$ and let $(\Phi, \vx, \delta)$ be an arbitrary flag. 
  Then
  \[ \begin{aligned}
     	\left( \baseFlag, \vx, 0 \right) \gamma^{-1} \tau_{\vy} \gamma%
     	&= \left(\Phi \gamma^{-1}, \vx \gamma^{-1} + \delta \va_{0 \gamma^{-1}}, \delta\right) \tau_{\vy} \gamma\\
     	&= \left(\Phi \gamma^{-1}, \vx \gamma^{-1} + \delta \va_{0 \gamma^{-1}} + \vy, \delta\right)\gamma\\
     	&= \left( \Phi, (\vx\gamma^{-1} + \delta \va_{0 \gamma^{-1}} + \vy) \gamma + \delta \va_{0\gamma}, \delta \right)\\
     	&=\left( \Phi, \vx + \delta(\ve_{0\gamma^{-1} \gamma}-\ve_{0\gamma} + \ve_{0 \gamma} - \ve_{0}) + \vy \gamma, \delta \right)\\
     	&=\left( \Phi, \vx + \vy \gamma, \delta \right)\\
     	&= \left( \Phi, \vx, \delta \right)\tau_{\vy\gamma}.
   \end{aligned}\]
 Similarly,
		\[\begin{aligned}
			 \left( \Phi, \vx, \delta \right)\gamma^{-1} \chi \gamma%
			 &=\left( \Phi \gamma^{-1}, \vx\gamma^{-1} + \delta \va_{0 \gamma^{-1}}, \delta\right) \chi \gamma \\
			 &=\left(\Phi \gamma^{-1}, -(\vx\gamma^{-1} + \delta \va_{0 \gamma^{-1}}), 1-\delta\right) \gamma\\
			 &= \left( \Phi, -\vx + \delta(\ve_{0} - \ve_{0\gamma^{-1}} ) \gamma + (1-\delta)(\ve_{0\gamma} - \ve_{0}), 1-\delta \right)\\
			 &=\left( \Phi, -\vx + \va_{0\gamma}, 1- \delta \right)\\
			 &= \left( \Phi, \vx, \delta \right) \chi \tau_{\va_{0\gamma}}. 
		\end{aligned} \]
Therefore, $ \aut(\cM)$ normalises $(U \ltimes \bZ_{2}) $  and \[\left\langle \aut(\cM) \cup \left\{ \chi \right\} \cup \left\{ \tau_{\vy}: \vy\in U \right\}  \right\rangle = \aut(\cM)\ltimes \left( \bZ_{2} \ltimes U  \right). \qedhere \]
\end{proof} 
%
%

Observe that if $\cM$ is a regular $n$-maniplex with automorphism group $\left\langle \rho_{0}, \dots, \rho_{n-1} \right\rangle $,  then $\twoSM$ is a regular $(n+1)$-maniplex and $\rho_{0}, \dots, \rho_{n-1}, \chi$ act as generating reflections of $\aut(\twoSM)$. 

If $\cM$ is a finite regular polytope, then $\twoSM$ is the polytope $\left( 2s^{\cM^{\ast} -1} \right)^{\ast}$, where $2s^{\cM^{\ast}-1}$ is the construction introduced by Pellicer in \cite{Pellicer_2009_ExtensionsRegularPolytopes} applied to the dual of $\cM$.
To see this, just observe that the automorphism group described in \cref{thm:autTwoSM} is isomorphic to the one described in \cite[Theorem 3.4]{Pellicer_2009_ExtensionsRegularPolytopes}. 
The isomorphism maps each abstract reflection of $\twoSM$ to the corresponding abstract reflection of $\left( 2s^{\cM^{\ast} -1} \right)^{\ast}$.
In this sense, the construction introduced here slightly generalises that of \cite{Pellicer_2009_ExtensionsRegularPolytopes} since the former can be applied to non-polytopal and/or non-regular maniplexes.

The construction $\twoSM$ allows us to prove the following theorem. 
\begin{thm}\label{thm:ExtensionsRegQuotients}
  Let $\cK$ be a chiral $n$-polytope of type $\{p_{1}, \dots, p_{n-1}\}$. Assume that $\cK$ has a quotient that is a regular maniplex with at least two facets. If $\cP$ is a chiral extension of $\cK$ of type $\{p_{1}, \dots, p_{n-1}, q\}$, then for every $s \in \bN$, $\cK$ has a chiral extension of type $\{p_{1}, \dots, p_{n-1}, \lcm(q,2s)\}$.
\end{thm}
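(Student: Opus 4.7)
The plan is to take $\cP_s$ to be the rotary polytope defined by the mix $\autp(\cP)\mix\autp(\twoSM[\cR])$, where $\cR$ is the regular $n$-maniplex (with at least two facets) that is a quotient of $\cK$ provided by hypothesis. Since $\twoSM[\cR]$ is a regular $(n+1)$-maniplex of type $\{p'_1,\ldots,p'_{n-1},2s\}$ (with $\{p'_1,\ldots,p'_{n-1}\}$ the type of $\cR$), the mix carries diagonal generators $\{(\sigma_i,\tilde\sigma_i):1\leq i\leq n\}$ and satisfies the relations in \cref{eq:relsSigmas} with exponents equal to the $\lcm$s of the corresponding exponents in each factor.

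To turn this group into a rotary polytope via \cref{thm:chiralGroups}, the critical step is the intersection property \cref{eq:intPropertyChiral}. This follows from \cref{lem:quotientCriterion} applied to $\cP$ and $\twoSM[\cR]$: the facets of $\cP$---all isomorphic to $\cK$---cover the facets of $\twoSM[\cR]$---all isomorphic to $\cR$---by the hypothesis that $\cK$ covers $\cR$. Because $p'_i\mid p_i$ for $i\leq n-1$ (the rotations of $\cR$ are epimorphic images of those of $\cK$ by \cref{prop:covers}), the type of $\cP_s$ comes out as $\{p_1,\ldots,p_{n-1},\lcm(q,2s)\}$.

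I would then identify the facets of $\cP_s$: the subgroup of $\autp(\cP_s)$ generated by the first $n-1$ diagonal generators is exactly $\autp(\cK)\mix\autp(\cR)$, and \cref{lem:covers} gives $\autp(\cK)\mix\autp(\cR)\cong\autp(\cK)$. By \cref{part:facets} of \cref{thm:chiralGroups}, the facets of $\cP_s$ are therefore isomorphic to $\cK$, showing that $\cP_s$ is an extension of $\cK$.

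For chirality I would use a facet-inheritance argument: $\cP_s$ is rotary, so it is either orientably regular or chiral. If $\cP_s$ were orientably regular, then by \cref{part:chirality} of \cref{thm:chiralGroups} there would be an involutive automorphism of $\autp(\cP_s)$ acting on the diagonal generators by $(\sigma_1,\tilde\sigma_1)\mapsto(\sigma_1,\tilde\sigma_1)^{-1}$, $(\sigma_2,\tilde\sigma_2)\mapsto(\sigma_1,\tilde\sigma_1)^{2}(\sigma_2,\tilde\sigma_2)$, and fixing the remaining generators. Since this action preserves the facet subgroup, it restricts to an involutive automorphism of $\autp(\cK)\cong\autp(\cK)\mix\autp(\cR)$ of the same shape, forcing $\cK$ to be orientably regular by \cref{part:chirality} of \cref{thm:chiralGroups} and contradicting the hypothesis that $\cK$ is chiral. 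Hence $\cP_s$ must be chiral. The main obstacle I anticipate is the intersection property for the mix, but \cref{lem:quotientCriterion} dispatches it cleanly via the hypothesized covering $\cK\to\cR$.
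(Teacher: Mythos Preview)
Your proof is correct and follows essentially the same route as the paper: form $\Gamma_s=\autp(\cP)\mix\autp(\twoSM[\cR])$, use \cref{lem:quotientCriterion} for the intersection property, \cref{lem:covers} to identify the facets with $\cK$, and conclude chirality from the chirality of the facets. The only difference is that you spell out the chirality step via the involutory automorphism of \cref{part:chirality}, whereas the paper simply invokes the standard fact that a rotary polytope with chiral facets is itself chiral.
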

\begin{proof}
  Let $\autp(\cK) = \langle \sigma_{1}, \dots, \sigma_{n-1} \rangle$ and $\autp(\cP) = \langle \sigma_{1}, \dots, \sigma_{n-1}, \sigma_{n} \rangle$. 
  Let $\cR$ be the regular quotient of $\cK$. 
  For $s \in \bN$, consider $\twoSM[\cR]$. 
  By the discussion above, $\twoSM[\cR]$ is a regular $(n+1)$-maniplex. 
  Let $\autp(\twoSM[\cR]) = \langle \sigma'_{1}, \dots, \sigma'_{n} \rangle$. 
  Observe that $\autp(\cR) = \langle \sigma'_{1}, \dots, \sigma'_{n-1} \rangle$. 
  Consider the group \[\Gamma_{s} = \autp(\cP) \mix \autp(\twoSM[\cR]) = \left\langle (\sigma_{1}, \sigma'_{1}), \dots, (\sigma_{n}, \sigma'_{n})  \right\rangle. \]   
  
  Observe that the group $\autp(\cK)=\langle \sigma_{1}, \dots, \sigma_{n-1} \rangle$ covers the group $\autp(\cR) = \langle \sigma'_{1}, \dots, \sigma'_{n-1} \rangle $. Since $\autp(\cP)$ satisfies the intersection property, then \cref{lem:quotientCriterion} implies that $\Gamma_{s}$ is the automorphism group of a rotary polytope $\cP_{s}$. 
  By \cref{lem:covers}, the group \[\autp(\cK) \mix \autp(\cR) = \left\langle (\sigma_{1}, \sigma'_{1}), \dots, (\sigma_{n-1}, \sigma'_{n-1})  \right\rangle \cong \autp(\cK).\] 
  It follows that the facets of $\cP_{s}$ are isomorphic to $\cK$ and hence they are chiral. 
  Therefore $\cP_{s}$ is chiral itself. 
  The order of $(\sigma_{n}, \sigma'_{n})$ is $\lcm(q,2s)$, which implies that $\cP_{s}$ is of type $\{p_{1}, \dots, p_{n-1}, \lcm(q,2s)\}$.
\end{proof}

\section{Chiral extensios of maps on the torus}\label{sec:examples}
In this section we show some applications of the constructions developed in \cref{sec:dually,sec:quotients}. 

\begin{exam}\label{exam:maps44_DB}
 If $b,c \in \bZ$ such that $bc(b-c) \neq 0$ and $b \equiv c \pmod{2}$, then the map $\left\{ 4,4 \right\}_{(b,c)} $ is a dually-bipartite chiral $3$-polytope.
 By applying the construction in \cref{sec:dually} we obtain an family of chiral $4$-polytopes of type $\left\{ 4,4, 2q \right\} $ for infinitely many $q \in \bN$.
\end{exam}

\begin{exam}\label{exam:maps36_DB}
 Similarly if $bc(b-c) \neq 0$ the map $\left\{ 3,6 \right\}_{(b,c)}$ is a dually-bipartite chiral $3$-polytope. 
 The construction in \cref{sec:dually} gives rise to a family of $4$-polytopes of type $\{3,6,2q\}$ for infinitely many values of $q \in \bN$.
\end{exam}

The condition for $\cK$ to be dually-bipartite can be slightly restrictive. 
For example the maps $\left\{ 6,3 \right\}_{(b,c)}$ are never dually-bipartite.
However the construction outlined in \cref{sec:quotients} can be applied to many of those maps, as it will be shown in the discussion below.

\begin{exam}\label{exam:maps44_42}
	The map $\cK=\{4,4\}_{(4,2)}$ has infinitely many non-isomorphic chiral extensions. 
	Moreover, $\cK$ admits a chiral extension whose last entry of its Schläfli symbol is arbitrarily large.
Indeed, by \cite[Theorem 1]{CunninghamPellicer_2014_ChiralExtensionsChiral}, $\cK$ admits a chiral extension $\cP$. 
Let $\bLL_{(b,c)}$ denote the lattice group such that $\{4,4\}_{(b,c)} = \{4,4\}/\bLL_{(b,c)}$. 
Since the lattice group $\bLL_{(4,2)}$ is contained in the lattice $\bLL_{(2,0)}$, then $\cR = \{4,4\}_{(2,0)}$ is a regular quotient of $\cK$ with $4$ facets. 
Therefore we may apply \cref{thm:ExtensionsRegQuotients}. 
\end{exam}

Of course the idea in \cref{exam:maps44_42} extends to a map $\left\{ 4,4 \right\}_{(b,c)} $ as long as such map has a regular quotient with at least two facets. 
The results in \cite[Section 4]{BredaDAzevedoNedela_2006_ChiralityGroupChirality} imply that the only maps on the torus without a regular quotient with at least two facets are $\{4,4\}_{(b,c)}$ and $\{6,3\}_{(b,c)}$ with $b,c$ coprime and $b \not\equiv c \pmod{2}$ for type $\{4,4\}$, or $b \not\equiv c \pmod{3}$ for the case $\{6,3\}$. 
Note that every regular toroidal map of type $\{3,6\}$ has at least two facets. 
Therefore, \cref{exam:maps44_42} extends to such maps.
In the discusion below we will give explicit Schläfli symbols for chiral extensions of such maps by using a slightly different approach.


In \cite{SchulteWeiss_1994_ChiralityProjectiveLinear} Schulte and Weiss build chiral $4$-polytopes from the hyperbolic tessellations of types $\left\{ 4,4,3 \right\}$, $\left\{ 4,4,4 \right\}$, $\left\{  6,3,3\right\}$ and $\left\{ 3,6,3 \right\}$. 
The strategy in all the cases is essentially the same: they represent the rotation group $[p,q,r]^{+}$ of $\left\{ p,q,r \right\}$ as matrices in $PGL_{2}(R)$  with $R=\bZ[i]$ (the Gaussian integers) if $\left\{ p,q,r \right\} $ is $\left\{ 4,4,4 \right\}$ or $\left\{ 4,4,3 \right\}$, and $ R= \bZ[\omega]$ (the Eisenstein integers) if $\left\{ p,q,r \right\}$ is $\left\{ 6,3,3 \right\}$ or $\left\{ 3,6,3 \right\}$. 
Then they find an appropriate $m$ in such a way there is a ring homomorphism from $R$ to $\bZ_{m}$. This induces a homomorphism from $PGL_{2}(R)$ to $ PGL_{2}(\bZ_{m})$ which maps $[p,q,r]^{+}$ to a finite group. Then they prove that these finite groups are the rotation groups of finite orientably regular or chiral polytopes. 

The following results are simplified versions of some of their results.

\begin{thm}[{\cite[Theorem 7.3]{SchulteWeiss_1994_ChiralityProjectiveLinear}}]\label{thm:443reg}
  For every integer $m \geq 3$ there exists an orientably regular polytope $\cP$ of type $\{4,4,3\}$ with facets isomorphic to $\{4,4\}_{(m,0)}$ and vertex-figures isomorphic to the cube $\{4,3\}$.
\end{thm}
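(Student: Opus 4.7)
The plan is to follow the general strategy outlined by Schulte and Weiss that is summarised just before the statement: realise the rotation group $[4,4,3]^{+}$ as a subgroup of $PGL_{2}(\bZ[i])$, reduce the coefficients modulo $m$ to obtain a finite quotient, and then show that this quotient has all the properties needed to be the rotation group of the desired orientably regular polytope.

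First I would fix explicit matrices $S_{1}, S_{2}, S_{3} \in PGL_{2}(\bZ[i])$ that serve as the abstract rotations $\sigma_{1}, \sigma_{2}, \sigma_{3}$ of $\{4,4,3\}$. They must satisfy $S_{1}^{4} = S_{2}^{4} = S_{3}^{3} = (S_{1}S_{2})^{2} = (S_{2}S_{3})^{2} = (S_{1}S_{2}S_{3})^{2} = \id$ in $PGL_{2}(\bZ[i])$, together with the intersection property in \cref{eq:interPropReg}; this is a standard fact about the Picard-type realisation used in \cite{SchulteWeiss_1994_ChiralityProjectiveLinear} and can be read off their setup for type $\{4,4,4\}$ by restricting to the subgroup fixing a suitable vertex-figure, or built from scratch using the action of $[4,4]^{+}$ by $z \mapsto iz$ and $z \mapsto -i(z-1)+1$ on $\bZ[i]$ together with an order $3$ generator of the cube's rotation group acting on the vertex figure.

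Second, for each $m \geq 3$, reduction modulo $m$ gives a ring homomorphism $\bZ[i] \to \bZ[i]/(m)$ and hence a group homomorphism $\pi_{m} : PGL_{2}(\bZ[i]) \to PGL_{2}(\bZ[i]/(m))$. Let $\Gamma_{m}= \langle \pi_{m}(S_{1}), \pi_{m}(S_{2}), \pi_{m}(S_{3})\rangle$. The defining relations of $[4,4,3]^{+}$ are preserved by $\pi_{m}$, so $\Gamma_{m}$ satisfies \cref{eq:relsSigmas}. The key step is to verify the intersection property \cref{eq:intPropertyChiral}: the facet subgroup $\langle \pi_{m}(S_{1}), \pi_{m}(S_{2})\rangle$ is the image of $[4,4]^{+}$ modulo exactly the lattice $m\bZ[i]=\bLL_{(m,0)}$, so it is the rotation group of $\{4,4\}_{(m,0)}$; the vertex-figure subgroup $\langle \pi_{m}(S_{2}), \pi_{m}(S_{3}) \rangle$ is a quotient of $[4,3]^{+} \cong S_{4}$, and I would check by a direct order-computation in $PGL_{2}(\bZ[i]/(m))$ that this quotient is faithful for every $m\geq 3$, so the vertex figure is the cube $\{4,3\}$. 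The remaining intersections follow by combining these two facts with \cref{lem:preQuotientCriterion}, using $[4,4,3]^{+}$ itself as the ambient group with the intersection property.

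Finally, to conclude that $\cP$ is orientably regular rather than merely chiral, I would invoke \cref{part:chirality} of \cref{thm:chiralGroups}: the Gaussian conjugation $i \mapsto -i$ on $\bZ[i]$ descends to $\bZ[i]/(m)$ and induces an involutory automorphism of $\Gamma_{m}$ that sends $\pi_{m}(S_{1})\mapsto \pi_{m}(S_{1})^{-1}$, $\pi_{m}(S_{2})\mapsto \pi_{m}(S_{1})^{2}\pi_{m}(S_{2})$, and fixes $\pi_{m}(S_{3})$, provided the matrices $S_{1}, S_{2}, S_{3}$ are chosen so that complex conjugation realises $\rho_{0}$-conjugation in $PGL_{2}(\bZ[i])$, which for the standard choice above is a short matrix check. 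The main obstacle is the verification of the intersection property for $\Gamma_{m}$ uniformly in $m$; I expect the most efficient route is, as sketched, to identify the facet subgroup with $\{4,4\}_{(m,0)}$ by a lattice argument and to verify the vertex-figure subgroup is the cube by a finite order count, then apply \cref{lem:preQuotientCriterion} rather than checking all pairs of subgroups directly.
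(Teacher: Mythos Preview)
The paper does not give its own proof of this theorem: it is simply quoted from \cite[Theorem 7.3]{SchulteWeiss_1994_ChiralityProjectiveLinear}, with the surrounding paragraph only summarising the Schulte--Weiss strategy. So there is nothing in the paper to compare your argument against beyond that summary, and your outline does follow that strategy (realise $[4,4,3]^{+}$ in $PGL_{2}(\bZ[i])$, reduce modulo $m$, identify the facet and vertex-figure subgroups, and obtain regularity from complex conjugation).

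That said, there is a genuine gap in your verification of the intersection property. You propose to apply \cref{lem:preQuotientCriterion} with $\Gamma = \Gamma_{m}$ and $\Lambda = [4,4,3]^{+}$, but that lemma requires a homomorphism $\Gamma \to \Lambda$ sending generators to generators, whereas the reduction map $\pi_{m}$ goes the other way, from $[4,4,3]^{+}$ onto $\Gamma_{m}$. There is no natural map $\Gamma_{m} \to [4,4,3]^{+}$ carrying $\pi_{m}(S_{i})$ to $S_{i}$, so the lemma cannot be invoked as stated. In the original Schulte--Weiss argument the intersection property is established by direct matrix computation inside $PGL_{2}(\bZ[i]/(m))$: one checks that $\langle \pi_{m}(S_{1}), \pi_{m}(S_{2})\rangle \cap \langle \pi_{m}(S_{2}), \pi_{m}(S_{3})\rangle = \langle \pi_{m}(S_{2})\rangle$ explicitly, after first identifying the facet subgroup with $\autp(\{4,4\}_{(m,0)})$ and the vertex-figure subgroup with $[4,3]^{+}\cong S_{4}$. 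If you want a shortcut via a quotient-type criterion, you would need a polytopal quotient \emph{of} $\Gamma_{m}$ whose facet group embeds, not a polytopal cover; otherwise you should plan to do the matrix check directly, as in the source.
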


\begin{thm}[{\cite[Theorem 7.6]{SchulteWeiss_1994_ChiralityProjectiveLinear}}]\label{thm:443chir}
  Let $m \geq 3$ be an integer such that the equation $x^{2}+1$ has a solution in $\bZ_{m}$.  Then for every $i \in \bZ_{m}$ such that $i^{2} \equiv -1 \pmod{m}$ there exist $b,c \in \bZ$ such that $\gcd(b,c)=1$, $m = b^{2}+c^{2}$ and $b+ci \equiv 0   \pmod{m}$. In this situation, the image of $[4,4,3]^{+}$ to $PGL_{2}(\bZ_{m})$ is the automorphism group of a chiral polytope of type $\{4,4,3\}$ with facets isomorphic to $\{4,4\}_{(b,c)}$ and vertex-figures isomorphic to $\{4,3\}$.
\end{thm}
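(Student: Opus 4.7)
The plan is to follow the strategy developed in \cite{SchulteWeiss_1994_ChiralityProjectiveLinear} for the other types: realise the rotation subgroup $[4,4,3]^{+}$ of the hyperbolic honeycomb $\{4,4,3\}$ as a subgroup of $PGL_{2}(\bZ[i])$, where here $i$ denotes the Gaussian unit, and then push it forward under the ring homomorphism $\phi: \bZ[i] \to \bZ_{m}$ determined by $i \mapsto i$ (now an element of $\bZ_{m}$). The image in $PGL_{2}(\bZ_{m})$ will be the candidate rotation group $\Gamma = \langle \sigma_{1}, \sigma_{2}, \sigma_{3} \rangle$, and we then verify the hypotheses of \cref{thm:chiralGroups} together with the failure of the involutory group automorphism in \cref{part:chirality}.

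First I would handle the number theoretic preamble. The existence of $i$ with $i^{2} \equiv -1 \pmod{m}$ forces every odd prime divisor of $m$ to be congruent to $1 \pmod{4}$ and $4 \nmid m$, so by classical sum-of-two-squares theory $m$ admits a primitive representation $m = b^{2} + c^{2}$. To arrange additionally that $b + ci \equiv 0 \pmod{m}$ I would apply Minkowski to the sublattice $\bLL = \{(b,c) \in \bZ^{2} : b + ci \equiv 0 \pmod{m}\}$, which has index $m$ in $\bZ^{2}$: a shortest nonzero vector $(b,c) \in \bLL$ satisfies $b^{2}+c^{2} \leq m$, and one shows $b^{2}+c^{2}=m$ exactly because $b+ci \equiv 0 \pmod{m}$ implies that the Gaussian norm $b^{2}+c^{2}=(b+ci_{\mathrm{gauss}})(b-ci_{\mathrm{gauss}})$ is divisible by $m$. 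Minimality also gives $\gcd(b,c)=1$.

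Next I would verify that $\Gamma$ defines the claimed polytope. The relations in \cref{eq:relsSigmas} for type $\{4,4,3\}$ hold because they hold in $[4,4,3]^{+}\leq PGL_{2}(\bZ[i])$ and are preserved by $\phi$. To obtain the intersection property \cref{eq:intPropertyChiral}, the key observation is that the kernel of $\phi$ corresponds, upon identifying $\bZ[i] \cong \bZ^{2}$, precisely to the translation sublattice $\bLL_{(b,c)}$ defining the toroidal quotient $\{4,4\}_{(b,c)}$. Therefore $\langle \sigma_{1}, \sigma_{2} \rangle$ is isomorphic to $\autp(\{4,4\}_{(b,c)})$, while $\langle \sigma_{2}, \sigma_{3} \rangle$ maps onto the rotation group of the cube $\{4,3\}$ (the reduction behaviour of the vertex-figure generators is independent of the choice of $m$, since the cube is finite). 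Then \cref{part:facets} of \cref{thm:chiralGroups} identifies the facets and vertex-figures of the polytope associated to $\Gamma$.

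The main obstacle is proving chirality, which requires ruling out the involution $\rho$ of \cref{part:chirality} of \cref{thm:chiralGroups}. Such a $\rho$ would be induced by complex conjugation on $\bZ[i]$, which acts on $PGL_{2}(\bZ[i])$ by entrywise conjugation; after reduction mod $m$ it would descend to an automorphism of $\Gamma$ only if complex conjugation is compatible with $\phi$, i.e.\ only if $-i \equiv i \pmod{m}$. Since $i^{2} \equiv -1 \pmod{m}$ and $m \geq 3$, this equality cannot hold (it would force $2i \equiv 0$ and hence $m \mid 4$, which is incompatible with $-1$ being a square mod $m$ for $m \geq 3$). The asymmetric dependence of $(b,c)$ on the chosen square root $i$ reflects precisely this obstruction, so $\Gamma$ does not admit the required involutory automorphism and the resulting polytope is chiral.
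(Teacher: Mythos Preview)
The paper does not give its own proof of this statement: \cref{thm:443chir} is quoted verbatim from \cite[Theorem~7.6]{SchulteWeiss_1994_ChiralityProjectiveLinear} and used as a black box in \cref{sec:examples}. So there is nothing in the present paper to compare your argument against beyond the one-paragraph summary preceding \cref{thm:443reg}, which describes exactly the strategy you propose (represent $[4,4,3]^{+}$ in $PGL_{2}(\bZ[i])$ and reduce modulo $m$). In that sense your outline is faithful to the intended approach.

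Two points in your sketch are genuinely incomplete, though. First, for the intersection property you only identify $\langle\sigma_{1},\sigma_{2}\rangle$ and $\langle\sigma_{2},\sigma_{3}\rangle$ with the expected groups; you do not verify \cref{eq:intPropertyChiral} itself, in particular $\langle\sigma_{1},\sigma_{2}\rangle\cap\langle\sigma_{2},\sigma_{3}\rangle=\langle\sigma_{2}\rangle$. In \cite{SchulteWeiss_1994_ChiralityProjectiveLinear} this is done by explicit matrix computations, and the present paper alludes to the same necessity for types $\{4,4,4\}$ and $\{3,6,3\}$ just after \cref{thm:443chir}. Second, your chirality argument only excludes the particular automorphism of $\Gamma$ coming from complex conjugation; it does not exclude an arbitrary involutory $\rho$ as in \cref{part:chirality} of \cref{thm:chiralGroups}. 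The clean way to finish is the one implicit in how the paper uses the theorem: since $\gcd(b,c)=1$ and $m\geq 3$ force $bc(b-c)\neq 0$, the facet $\{4,4\}_{(b,c)}$ is a chiral map, and a rotary polytope with chiral facets cannot be regular.
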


Note that if we start with arbitrary $b,c \in \bZ$ such that $\gcd(b,c) = 1$ then $m=b^{2}+c^{2}$ satisfies the hypotesis of \cref{thm:443chir}. 
It follows that we can build a chiral extension of type $\{4,4,3\}$ for every map $\left\{ 4,4 \right\}_{(b,c)} $ with $\gcd(b,c) = 1$.
It can be proved that for every pair of integers $b,c$ there exist $b_{1}, \dots, b_{k}, c_{1}, \dots, c_{k} \in \bZ$ such that 
\[\autp\left(\left\{ 4,4 \right\}_{(b,c)}\right) \cong \autp\left(\left\{4,4\right\}_{(b_{1},c_{1})}\right) \mix \cdots \mix \autp\left(  \left\{ 4,4 \right\}_{(b_{k},c_{k})}\right),\] 
and such that if $b_{j}c_{j} \neq 0$, then $b_{j}$ and $c_{j}$ are coprime.
For each $j \in \{1, \dots, k\}$ let $\cP_{j}$ be the extension of $\{4,4\}_{(b_{j},c_{j})}$ given by \cref{thm:443reg} or \cref{thm:443chir}. 
Let \[\Gamma = \autp\left( \cP_{1}\right) \mix \cdots \mix \autp\left( \cP_{k} \right). \]
The group $\Gamma$ satisfy the relations in \cref{eq:relsSigmas}.
The intersection propety for $\Gamma$ follows from a dual version of \cref{lem:quotientCriterion} since all the vertex-figures of the polytopes $\cP_{1}, \dots, \cP_{k}$ are isomorphic to the cube.
Therefore the polytope $\cP=\cP(\Gamma)$ obtained from \cref{thm:chiralGroups} is a rotary polytope of type $\{4,4,3\}$. 
Observe that the facets of $\cP$ are precisely $\left\{ 4,4 \right\}_{(b,c)}$. This implies that $\cP$ is a chiral extension of  $\left\{ 4,4 \right\}_{(b,c)}$ of type $\left\{ 4,4,3 \right\} $.

Of course, the analysis we have done for type $\left\{ 4,4,3 \right\}$ can be done for types $\left\{ 4,4,4 \right\}$, $\left\{ 6,3,3 \right\}$ and $\left\{ 3,6,3 \right\}$ (using the results of Section 8, Section 9 and Section 10 of \cite{SchulteWeiss_1994_ChiralityProjectiveLinear}, respectively), and we  obtain chiral extensions of the maps $\{4,4\}_{(b,c)}$, $\left\{ 3,6 \right\}_{(b,c)}$ and $\{6,3\}_{(b,c)}$. 
With those polytopes of type $\{6,3,3\}$ the intersection property will follow again from \cref{lem:quotientCriterion}. 
For types $\{4,4,4\}$ and $\{3,6,3\}$ the intersection property involve long but straightforward computations using the matices associated to the rotation groups.
The proof can be done following the same idea as in \cite[Sections 8 and 10]{SchulteWeiss_1994_ChiralityProjectiveLinear}. 

By combining the discussion above with \cref{thm:ExtensionsRegQuotients}, we can prove the following results. 

\begin{thm}\label{thm:chiralExtOfMaps444}
	Let $\cM=\{4,4\}_{(b,c)}$ be a chiral map of type $\{4,4\}$ on the torus. Let $d=\gcd(b,c)$. If $d \neq 1$ or $d=1$ but $b \equiv c \pmod{2}$, then for every $s \in \bN$ there exists a chiral extension of $\cM$ of type $\{4,4,6s\}$ and a chiral extension of $\cM$ of type $\{4,4,4s\}$.
\end{thm}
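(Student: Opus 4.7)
The plan is to combine two ingredients already developed in the preceding pages: (i) the existence of chiral extensions of $\cM = \{4,4\}_{(b,c)}$ of types $\{4,4,3\}$ and $\{4,4,4\}$, obtained by mixing the Schulte--Weiss polytopes from \cite{SchulteWeiss_1994_ChiralityProjectiveLinear}, and (ii) the existence of a regular quotient of $\cM$ with at least two facets under the hypothesis of the theorem. Once both ingredients are in hand, \cref{thm:ExtensionsRegQuotients} amplifies the last entry of the Schläfli symbol to the desired value.

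For ingredient (i), I would just invoke the construction described in the paragraph following \cref{thm:443chir}: decompose $\autp(\{4,4\}_{(b,c)})$ as a mix $\autp(\{4,4\}_{(b_{1},c_{1})}) \mix \cdots \mix \autp(\{4,4\}_{(b_{k},c_{k})})$ with each $(b_{j},c_{j})$ coprime (or with $b_{j}c_{j}=0$), apply \cref{thm:443reg} or \cref{thm:443chir} to each factor to obtain extensions with vertex-figures the cube $\{4,3\}$, and mix; the intersection property for the resulting group follows from a dual version of \cref{lem:quotientCriterion}. This yields a chiral polytope $\cP_{3}$ of type $\{4,4,3\}$ with facets isomorphic to $\cM$. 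Running the analogous argument with the results for type $\{4,4,4\}$ (\cite[Section 8]{SchulteWeiss_1994_ChiralityProjectiveLinear}) yields a chiral polytope $\cP_{4}$ of type $\{4,4,4\}$ with facets isomorphic to $\cM$.

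For ingredient (ii), write $\bLL_{(b,c)}$ for the lattice generated by $(b,c)$ and $(-c,b)$, so that $\{4,4\}_{(b',c')}$ is a quotient of $\{4,4\}_{(b,c)}$ whenever $\bLL_{(b,c)} \subset \bLL_{(b',c')}$. If $d \geq 2$, write $(b,c) = d(b',c')$; then $(b,c),(-c,b) \in \bLL_{(d,0)}$, so $\cM$ covers the regular map $\{4,4\}_{(d,0)}$, which has $d^{2} \geq 4$ facets. If $d = 1$ and $b \equiv c \pmod{2}$, then $b$ and $c$ are both odd (since they are coprime), so $b+c$ and $b-c$ are even and $(b,c),(-c,b)$ both lie in $\bLL_{(1,1)} = \{(x,y) \in \bZ^{2} : x+y \text{ even}\}$. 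Hence $\cM$ covers the regular map $\{4,4\}_{(1,1)}$, which has exactly $2$ facets.

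Finally, apply \cref{thm:ExtensionsRegQuotients} twice with the regular quotient from (ii). Starting from $\cP_{3}$ (with last Schläfli entry $q = 3$) and choosing parameter $s' = 3s$ yields a chiral extension of $\cM$ of type $\{4,4,\lcm(3, 6s)\} = \{4,4,6s\}$. Starting from $\cP_{4}$ (with $q = 4$) and choosing $s' = 2s$ yields one of type $\{4,4,\lcm(4,4s)\} = \{4,4,4s\}$. I expect no serious obstacle: ingredient (i) is essentially bookkeeping on top of the Schulte--Weiss construction, ingredient (ii) is the elementary lattice computation above, and the amplification step is a direct appeal to \cref{thm:ExtensionsRegQuotients}.
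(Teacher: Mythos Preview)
Your proposal is correct and follows essentially the same approach the paper sketches in the sentence preceding the statement: obtain chiral extensions of types $\{4,4,3\}$ and $\{4,4,4\}$ via the Schulte--Weiss mixing construction, exhibit a regular quotient with at least two facets, and then apply \cref{thm:ExtensionsRegQuotients}. Your explicit lattice argument for ingredient (ii) replaces the paper's appeal to \cite{BredaDAzevedoNedela_2006_ChiralityGroupChirality}, and your explicit choices $s'=3s$ and $s'=2s$ make transparent what the paper leaves to the reader; neither changes the strategy.
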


\begin{thm}\label{thm:chiralExtOfMaps366}
	Let $\cM=\{3,6\}_{(b,c)}$ be a chiral map of type $\{3,6\}$ on the torus. Then for every $s \in \bN$ there exists a chiral extension of $\cM$ of type $\{3,6,6s\}$.
\end{thm}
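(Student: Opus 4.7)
The plan is to reduce the theorem to the combination of a Schulte--Weiss-style construction that produces a chiral extension of type $\{3,6,3\}$ with an application of \cref{thm:ExtensionsRegQuotients} to boost the last entry of the Schläfli symbol.

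First, following exactly the same strategy as the paragraph leading to \cref{thm:chiralExtOfMaps444,thm:chiralExtOfMaps366}, I would produce, for every chiral toroidal map $\cM = \{3,6\}_{(b,c)}$, a chiral extension $\cP$ of $\cM$ of type $\{3,6,3\}$. When $\gcd(b,c)=1$, such an extension is obtained from the representation of $[3,6,3]^{+}$ in $PGL_{2}(\bZ[\omega])$ and reduction modulo a suitable ideal, in direct analogy with \cref{thm:443chir} and using \cite[Section~10]{SchulteWeiss_1994_ChiralityProjectiveLinear}. For arbitrary $(b,c)$, I would decompose $\autp(\cM)$ as a mix of rotation groups of maps $\{3,6\}_{(b_{j},c_{j})}$ with coprime (or zero) parameters $b_{j},c_{j}$ and mix the corresponding chiral extensions, invoking \cref{thm:chiralGroups}; the intersection property of the resulting mixed rotation group is the ``long but straightforward'' matrix computation mentioned in the paragraph before \cref{thm:chiralExtOfMaps444}.

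Second, I would observe that $\cM$ admits a regular quotient with at least two facets. As noted in that same paragraph, every regular toroidal map of type $\{3,6\}$ has at least two facets, and for any $(b,c)$ with $bc(b-c)\neq 0$ one can choose an integer $d\geq 2$ such that $\bLL_{(b,c)} \subseteq \bLL_{(d,0)}$, so that $\cR = \{3,6\}_{(d,0)}$ is a regular quotient of $\cM$ with at least two facets.

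Finally, I would apply \cref{thm:ExtensionsRegQuotients} to $\cK = \cM$, the chiral extension $\cP$ of type $\{3,6,3\}$, and the regular quotient $\cR$, with the parameter $3s$ in place of $s$. Since $\lcm(3,\, 2\cdot 3s) = \lcm(3, 6s) = 6s$, the resulting polytope is a chiral extension of $\cM$ of type $\{3,6,6s\}$, as required. (Note that $3s\geq 2$ for every $s\in\bN$, so the construction of $\hat{2}^{\cR^{3s-1}}$ is valid.)

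The main obstacle in this plan is the first step: producing a chiral extension of type $\{3,6,3\}$ for every chiral $\{3,6\}_{(b,c)}$, not merely the coprime ones. The coprime case is immediate from the Schulte--Weiss matrix construction; for the general case one must check that the mix of several such extensions still satisfies the intersection property in \cref{eq:intPropertyChiral}, and this is where the bulk of the work sits. Once this is granted, the remainder of the argument is a direct application of \cref{thm:ExtensionsRegQuotients} with a carefully chosen parameter.
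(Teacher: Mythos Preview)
Your approach is essentially the one the paper intends: build a chiral extension of $\cM$ of type $\{3,6,3\}$ via the Schulte--Weiss construction (together with the mixing decomposition for non-coprime parameters), then invoke \cref{thm:ExtensionsRegQuotients} with parameter $3s$ to force the last entry $\lcm(3,6s)=6s$.

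There is one small slip in your second step. You claim that for any chiral $\{3,6\}_{(b,c)}$ there is an integer $d\geq 2$ with $\bLL_{(b,c)}\subseteq\bLL_{(d,0)}$; but this containment forces $d\mid b$ and $d\mid c$, so it fails precisely when $\gcd(b,c)=1$. The correction is immediate and is in fact the point the paper makes: for type $\{3,6\}$ the regular map $\{3,6\}_{(1,0)}$ already has two (triangular) facets, and \emph{every} $\{3,6\}_{(b,c)}$ covers it. This is why the paper remarks that every regular toroidal map of type $\{3,6\}$ has at least two facets, and why no arithmetic restriction on $(b,c)$ appears in \cref{thm:chiralExtOfMaps366}, in contrast with \cref{thm:chiralExtOfMaps444} and \cref{thm:chiralExtOfMaps636}. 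With $\cR=\{3,6\}_{(1,0)}$ your third step goes through verbatim.
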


\begin{thm}\label{thm:chiralExtOfMaps636}
	Let $\cM=\{6,3\}_{(b,c)}$ be a chiral map of type $\{6,3\}$ on the torus. Let $d=\gcd(b,c)$. If $d \neq 1$ or $d=1$ but $b \equiv c \pmod{3}$, then for every $s \in \bN$ there exists a chiral extension of $\cM$ of type $\{6,3,6s\}$.
\end{thm}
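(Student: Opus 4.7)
The plan is to apply \cref{thm:ExtensionsRegQuotients} to a suitably chosen seed chiral extension of $\cM$ and a regular quotient maniplex of $\cM$ with at least two facets. Recall that \cref{thm:ExtensionsRegQuotients} converts any chiral extension of $\cM$ of type $\{6,3,q\}$ into extensions of type $\{6,3,\lcm(q,2s')\}$ for every $s' \in \bN$, provided the regular quotient exists. To reach $\{6,3,6s\}$ it therefore suffices to build a seed extension of type $\{6,3,3\}$ and apply the construction with parameter $s' = 3s$, since $\lcm(3,6s) = 6s$.

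First I would produce a chiral $4$-polytope of type $\{6,3,3\}$ whose facets are $\{6,3\}_{(b,c)}$. When $\gcd(b,c) = 1$, the $PGL_2$-construction from Section~9 of \cite{SchulteWeiss_1994_ChiralityProjectiveLinear} yields such a polytope, exactly as in \cref{thm:443reg,thm:443chir} for the case $\{4,4,3\}$. For arbitrary $(b,c)$, I would mimic the construction described in the paragraphs following \cref{thm:443chir}: decompose $\autp(\{6,3\}_{(b,c)})$ as a mix $\autp(\{6,3\}_{(b_1,c_1)}) \mix \cdots \mix \autp(\{6,3\}_{(b_k,c_k)})$ with each $(b_j,c_j)$ coprime, pick a $\{6,3,3\}$-extension $\cP_j$ of each factor, and set $\Gamma = \autp(\cP_1)\mix\cdots\mix\autp(\cP_k)$. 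The relations in \cref{eq:relsSigmas} for $\Gamma$ are immediate, and the intersection property follows from the \emph{dual} of \cref{lem:quotientCriterion}, because all vertex-figures of the $\cP_j$ are isomorphic to the (regular) tetrahedron $\{3,3\}$, so the common vertex-figure plays the role of the common regular quotient demanded by that lemma. By \cref{thm:chiralGroups} the group $\Gamma$ determines a rotary $4$-polytope $\cP$ of type $\{6,3,3\}$ whose facets are precisely $\{6,3\}_{(b,c)}$; since those facets are chiral, $\cP$ itself is chiral.

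Second I would produce a regular quotient $\cR$ of $\cM$ with at least two facets. The hypothesis that either $d \neq 1$ or ($d=1$ and $b \equiv c \pmod{3}$) is exactly the condition identified in \cite[Section~4]{BredaDAzevedoNedela_2006_ChiralityGroupChirality} for $\{6,3\}_{(b,c)}$ to admit such a regular toroidal quotient. Concretely, one can take $\cR$ to be an appropriate regular toroidal $\{6,3\}_{(m,0)}$ or $\{6,3\}_{(m,m)}$ sitting between $\cM$ and the universal $\{6,3\}$.

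Finally I would feed $\cM$, the seed extension $\cP$ from the first step, and the regular quotient $\cR$ from the second step into \cref{thm:ExtensionsRegQuotients} with parameter $s' = 3s$. The theorem produces a chiral extension of $\cM$ of type $\{6,3,\lcm(3,2\cdot 3s)\} = \{6,3,6s\}$, which is what is claimed. The main obstacle is the first step, and within it the verification of the intersection property for the mix $\Gamma$; but as indicated above this reduces cleanly to the dual of \cref{lem:quotientCriterion} thanks to the common tetrahedral vertex-figure, so the argument is entirely parallel to the $\{4,4,3\}$ case already treated in this section.
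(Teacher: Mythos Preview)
Your proposal is correct and follows essentially the same route as the paper: build a seed chiral extension of type $\{6,3,3\}$ via the Schulte--Weiss $PGL_{2}$ construction (Section~9 of \cite{SchulteWeiss_1994_ChiralityProjectiveLinear}) together with the mix-decomposition argument, verify the intersection property via the dual of \cref{lem:quotientCriterion} using the common tetrahedral vertex-figure, invoke the hypothesis to obtain a regular quotient with at least two facets, and then apply \cref{thm:ExtensionsRegQuotients}. The only detail you add beyond the paper's sketch is the explicit choice of parameter $s'=3s$ so that $\lcm(3,2s')=6s$, which is exactly right.
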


\section{Acknowledgements}
The author wishes to thank Daniel Pellicer for all the patience and the several hours of duscussion about the results developed in this paper. He also wishes to thank Gabe Cunningham for helping to understand the mathematics required for the last section.

\bibliographystyle{plainurl}
\bibliography{chirExtSchlafli.bib}

\end{document}